\documentclass{amsart}

\usepackage[margin=1in]{geometry}
\usepackage{amsmath}
\usepackage[alphabetic,nobysame]{amsrefs}
\usepackage{amssymb, color}
\usepackage{mathrsfs}
\usepackage{graphicx}
\usepackage{float}
\usepackage{epsf}
\usepackage[norelsize]{algorithm2e}

\graphicspath{{Figures/}}

\numberwithin{equation}{section}

\newtheorem{lem}{Lemma}[section]
\newtheorem{thm}{Theorem}[section]
\newtheorem{prop}[thm]{Proposition}
\newtheorem{cor}[thm]{Corollary}
\theoremstyle{remark}
\newtheorem{rmk}{Remark}[section]

\newcommand{\bd}{\boldsymbol}
\renewcommand{\tilde}{\widetilde}
\renewcommand{\hat}{\widehat}

\newcommand{\nn}{\nonumber}
\newcommand{\Ni}{\noindent}
\newcommand{\R}{{\mathbb R}}

\newcommand{\del}{\partial}
\newcommand{\dx}{ \, {\rm d} x}
\newcommand{\dv}{ \, {\rm d} v}

\newcommand{\La}{\left\langle}
\newcommand{\Ra}{\right\rangle}
\newcommand{\Id}{{\bf{1}}}

\newcommand{\ii}{I}

\newcommand{\CalB}{{\mathcal{B}}}
\newcommand{\CalD}{{\mathcal{D}}}

\newcommand{\CalL}{{\mathcal{L}}}
\newcommand{\CalP}{{\mathcal{P}}}
\newcommand{\CalT}{{\mathcal{T}}}

\newcommand{\Vecf}{f}
\newcommand{\Vf}{f}
\newcommand{\Vg}{g}

\newcommand{\VU}{\vec U}
\newcommand{\VecL}{{\CalL}}

\newcommand{\Vecphi}{{{\phi}}}
\newcommand{\Vphi}{\phi}

\newcommand{\NullL} {{\rm Null} \, \VecL}

\newcommand{\mc}[1]{\mathcal{#1}}

\newcommand{\TT}{\mathrm{T}}

\newcommand{\VV}{\mathbb{V}}
\newcommand{\Be}{{\bf e}}

\newcommand{\ud}{\,\mathrm{d}}
\newcommand{\rd}{\mathrm{d}}

\newcommand{\norm}[1]{\left\lVert#1\right\rVert}

\newcommand{\viint}[2]{\left\langle#1, \, #2 \,\right\rangle}
\newcommand{\vpran}[1]{\left(#1\right)}

\DeclareMathOperator{\Span}{span}
\DeclareMathOperator\erf{erf}
\DeclareMathOperator\diag{diag}
\DeclareMathOperator\rank{rank}

\begin{document}

\title{A convergent method for linear half-space kinetic equations}

\author{Qin Li}
\address{Computing and Mathematical Sciences, California Institute of Technology, 1200 E California Blvd. MC 305-16, Pasadena, CA 91125 USA. Present address: Department of Mathematics, University of Wisconsin-Madison, Madison, WI, 53705 USA.}
\email{qinli@math.wisc.edu}
\author{Jianfeng Lu}
\address{Departments of Mathematics, Physics, and Chemistry, Duke University, Box 90320, Durham, NC 27708 USA.}
\email{jianfeng@math.duke.edu}
\author{Weiran Sun}
\address{Department of Mathematics, Simon Fraser University, 8888 University Dr., Burnaby, BC V5A 1S6, Canada}
\email{weirans@sfu.ca}

\date{\today}
\thanks{We would like to express our gratitude to the NSF grant RNMS11-07444 (KI-Net), whose activities initiated our collaboration.  The research of Q.L. was supported in part by the AFOSR MURI grant FA9550-09-1-0613 and the National Science Foundation under award DMS-1318377. The research of J.L.~was supported in part by the Alfred P.~Sloan Foundation and the National Science Foundation under award DMS-1312659. The research of W.S.~was supported in part by the Simon Fraser University President's Research Start-up Grant PRSG-877723 and NSERC Discovery Individual Grant \#611626.}\thanks{J.L.~would also like to thank Zheng Chen, Jian-Guo Liu, Chi-Wang Shu for helpful discussions. W.S.~would like to thank Cory Hauck for pointing out the reference [ES12].}

\begin{abstract}
 We give a unified proof for the well-posedness of a
  class of linear half-space equations with general incoming data and
  construct a Galerkin method to numerically resolve this type of
  equations in a systematic way. Our main strategy in both analysis
  and numerics includes three steps: adding damping terms to the
  original half-space equation, using an inf-sup argument and even-odd
  decomposition to establish the well-posedness of the damped
  equation, and then recovering solutions to the original half-space
  equation.  The proposed numerical methods for the damped equation is
  shown to be quasi-optimal and the numerical error of approximations
  to the original equation is controlled by that of the damped
  equation. This efficient solution to the half-space problem is
  useful for kinetic-fluid coupling simulations.
\end{abstract}

\subjclass{35F15, 35Q79}
\keywords{Half-space equations; boundary layer; kinetic-fluid
  coupling; Galerkin method}
\maketitle

\section{Introduction}
In this paper we propose a Galerkin method for computing a class of
half-space kinetic equation with given incoming data:
\begin{equation}\label{eq:half-space-1}
  \begin{aligned}
     (v_1+u) \partial_x &f + \mc{L} f = 0, \quad && x \in [0, +\infty), \,\, v \in \VV \subseteq \R^d  \,, \\
    f\big|_{x=0} &= \phi(v), \quad && v_1+u > 0 \,. \\
  \end{aligned}
\end{equation}
where $u \in \R$ is a given constant, $x$ is the spatial variable and $v$ is the velocity variable. Typical examples for the velocity space $\VV$ are $\VV = [-1, 1]$ and $\VV = \R^d$. The density function $f$ is vector-valued when the system has multiple species. The integral operator $\CalL$ only acts on the velocity variable $v$. The specific structure and main assumptions regarding $\CalL$ will be given in Section~\ref{sec:setting}. 

In asymptotic analysis, half-space equations arise as leading-order boundary-layer equations for kinetic equations with multi-scales. Their solutions bridge the gap between the fluid and kinetic
boundary conditions. One motivation of our work is to study the kinetic-fluid coupling using the domain-decomposition method, where the half-space equation serves as the intermediate equation between the fluid and kinetic regimes. In this case, understanding the well-posedness of \eqref{eq:half-space-1} and constructing accurate and efficient numerical schemes to resolve it will provide explicit characterization of the couplings. 

In the literature the well-posedness of equation
\eqref{eq:half-space-1} has long been investigated \cites{BardosSantosSentis:84, BLP:79,
BardosYang:12, CoronGolseSulem:88, Golse:08, UkaiYangYu:03} for
various models. For example, when $\CalL$ is the linearized Boltzmann operator, the well-posedness 
of such half-space equation is fully proved in the fundamental work by
Coron, Golse, and Sulem \cite{CoronGolseSulem:88}. In this work, it is
shown that depending on the choices of $u$, one needs to prescribe
various numbers of additional boundary conditions such that
\eqref{eq:half-space-1} is well-posed. These numbers of boundary
conditions correspond to the counting of the incoming Euler
characteristics at $x = \infty$. 
The proof in \cite{CoronGolseSulem:88} relies mainly on the energy
method. Subsequently, a different proof using a variational
formulation of \eqref{eq:half-space-1} for the linearized Boltzmann
equation is given in
\cite{UkaiYangYu:03}. 
The key idea in \cite{UkaiYangYu:03} is to revise
\eqref{eq:half-space-1} by adding certain damping terms. The revised
collision operator thus obtained is coercive and it enforces the
end-state of $f$ at $x=\infty$ to be zero. By the conservation
properties of $\CalL$, the authors then show that
\eqref{eq:half-space-1} is well-posed for a large class of incoming
data. One restriction in \cite{UkaiYangYu:03} is that $u$ cannot be
chosen in the way such that the Mach number of the system is $-1, 1$,
or $0$. This restriction was later removed in \cite{Golse:08}.

The variational formulation is also a common tool in proving the well-posedness of the neutron transport equations over general bounded domains $\Omega$ in $\R^{d}_x$. 
There is a vast literature in this direction and we will only review some of the main framework and results in \cite{EggerSchlottbom:12} which are most relevant to us. In \cite{EggerSchlottbom:12}, the linear operator $\CalL$ is the subcritical neutron transport operator. Hence it has a trivial null space. The main novelty of \cite{EggerSchlottbom:12} is that one decomposes the
solution $f$ into its even and odd parts in $v$ and imposes different
regularities for these two parts. Using this mixed regularity, the
authors of \cite{EggerSchlottbom:12} 
write the kinetic equation into a variational form and verify that the bilinear operator involved
satisfies an inf-sup condition over a properly chosen function
space. Moreover, they show that for appropriately constructed Galerkin
approximations, the bilinear operator satisfies the inf-sup condition
over finite-dimensional approximation spaces as well. This then shows
the Galerkin approximation is quasi-optimal. Note that the
even-odd parity was widely used for transport equations, see for
example \cite{JinPareschiToscani:01}.

There are two main goals in our paper: first, we will generalize the
analysis in \cites{EggerSchlottbom:12, Golse:08, UkaiYangYu:03} to
obtain a unified proof for the well-posedness of half-space equations
in the form of \eqref{eq:half-space-1}. Second, we will develop a
systematic Galerkin method to numerically resolve
\eqref{eq:half-space-1} and obtain accuracy estimates for our scheme.

We now briefly explain our main results and compare them with previous ones in the literature. In terms of analysis, we show that with appropriate additional boundary conditions at $x = \infty$ given in \cite{CoronGolseSulem:88}, equation \eqref{eq:half-space-1} has a unique solution.  The basic framework we use is the  even-odd variational formulation developed in \cite{EggerSchlottbom:12}. Compared with \cite{EggerSchlottbom:12}, here we allow the linear operator $\CalL$ to have a nontrivial null space and the background velocity $u$ to be any arbitrary constant for general models. The number of additional boundary conditions will change with $u$. 

Due to the loss of coercivity of $\CalL$, if one directly applies the variational method in \cite{EggerSchlottbom:12} then the bilinear operator $\CalB$ ceases to satisfy the inf-sup condition. To overcome this degeneracy, we utilize the ideas in \cites{Golse:08, UkaiYangYu:03} by adding damping terms to \eqref{eq:half-space-1} and reconstructing solutions to \eqref{eq:half-space-1} from the damped equation.  In the case of linearized Boltzmann equation with a single species, we thus recover the results (in the $L^2$ spaces) in \cites{Golse:08, UkaiYangYu:03}. 

The main differences between our work and \cites{Golse:08, UkaiYangYu:03} are: first, we use a different variational formulation which is convenient for performing numerical analysis. Second, the reconstruction in \cites{Golse:08, UkaiYangYu:03} is restricted to a set of incoming data with a finite codimension such that the damping terms are identically zero. Here we use slightly different damping terms and we recover solutions to \eqref{eq:half-space-1} from the damped equation for any incoming data. 

On the other hand, our main concern is the convergence and accuracy of the numerical scheme and the basic $L^2$-spaces are sufficient for this purpose. Therefore, except for the hard sphere case, we do not try to achieve decay rates estimates of the half-space solution to its end-state at $x=\infty$, while in the literature there are  a lot of works that show subexponential or superpolynomial decay of the solution to its end-state for hard or soft potentials for the linearized Boltzmann equation(see for example \cites{CLY-04, WYY-06, WYY-07}).

Our analysis also applies to linearized Boltzmann equations with multiple species and linear neutron transport equations with critical or subcritical scatterings, thus providing an alternative proof to the well-posedness result (in the $L^2$-space) in \cite{BardosYang:12}.

In parallel with the analysis, numerically we first solve the damped half-space equation and then recover the solution to the original equation. We will use a spectral method and achieve quasi-optimal accuracy (for the damped equation) as in~\cite{EggerSchlottbom:12}. 
The spectral method dates back to Degond and Mas-Gallic~\cite{DegondMas-Gallic:87} for solving radiative transfer equations, and was later extended by Coron~\cite{Coron:90} to solving the linearized BGK equation as well. Compared with these works, our approach differs in three ways: First, as a result of using the even-odd formulation, we can derive explicit boundary conditions for the approximate equations. In particular, the number of these boundary conditions is shown to be
consistent with the number of the unknowns. Hence our discrete systems are always well-posed. This was not the case in
\cite{Coron:90} where a least square method was used  to solve a potentially overdetermined problem. Second, the method in \cite{Coron:90} used Hermite functions defined on the
whole velocity space as their basis functions. This leads to severe
Gibbs phenomenon, since in general the solution to the half-space
equation has a finite jump at $x = 0$ and $v = -u$. Here we choose to use basis functions with jumps at $v = -u$ which naturally fit into the even-odd formulation. This idea is inline with the double $P_N$ method. Third, we will treat the cases with arbitrary bulk velocities $u$ in a uniform way while in~\cites{Coron:90} different schemes are used for the cases $u = 0$ and $u \neq 0$. 

Since the main purpose of the current work is to establish the basic theoretical framework for solving the half-space equations, we only present two numerical examples in this paper. Both of them are for 1D velocity space and a single species. More extensive tests for multi-dimensional velocity space, multi-species, and multi-frequency cases will be done in a forthcoming paper \cite{LiLuSun2015} where general boundary conditions including various reflections at the boundary are considered.

There are also non-spectral methods developed for solving the
half-space equations. For example, 
the work by Golse and Klar~\cite{GolseKlar:95} uses Chapman-Enskog
approximation with diffusive closures. The accuracy of these approximations would be hard to analyze: the iterative approach couples the error from the systematic expansion truncation with the numerical error.  Moreover, this work (\cite{GolseKlar:95})
also treats the cases $u = 0$ and $ u \not = 0$ separately.  A positivity-preserving DG method was proposed in~\cite{CGP12} to treat the Vlasov-Boltzmann transport equation where algebraical convergence is proved. The
recent work by Besse \textit{et al.}~\cite{BesseBorghol:11} treats the half-space problem as
a boundary layer matching kinetics with the limiting fluid equation,
where a Marshak type approximation~\cite{Marshak:47} is applied for
boundary fluxes. Similar idea was also used
in~\cite{Dellacherie:03}. As shown already in \cite{Coron:90}, in
general the Marshak approximation does not yield accurate
approximations to the half-space problem.

The layout of this paper as follows: in Section 2, we gather the basic
information related to the linear operator $\CalL$ and the properties of the damped operator we will be using in the proof, together with the variational formulation we use. Section 3 is devoted to show the well-posedness of the damped equation and the recovery of the original equation. In Section 4 we show its numerical counterpart and present the result on the Galerkin approximation. Section 5 collects all numerical schemes and results for the linearized BGK and linear transport equations.

\section{Linear Operator and Basic Setting}\label{sec:setting}
In this section we will set the framework for our analysis and numerics. In particular, we will show the basic assumptions about the collision operator 
$\CalL$ and the structure of the damped operator and present the variational formulation of a damped version of  \eqref{eq:half-space-1}.

\subsection{Linear collision operator} In order to state the main assumptions imposed on $\CalL$, we first introduce some notations. Denote $\NullL$ as the null space of $\CalL$. Let $\CalP: (L^2(\dv))^m \to \NullL$ be the projection onto $\NullL$. Define the weight function
\begin{equation}\label{def-a}
     a(v) = (1 + |v|)^{\omega_0} \,,
\end{equation}
for some $0 \leq \omega_0 \leq 1$. Throughout the paper we use 
\begin{align} \label{notation}
   \langle f , g \rangle = \langle f , g \rangle_v = \int_{\VV} f \cdot g \dv \,,
\qquad
   \langle f , g \rangle_{x,v} = \int_{\R^d}\int_{\VV} f \cdot g \dv \dx \,.
\end{align}

\subsubsection{Assumptions on $\CalL$} The main assumptions on $\CalL$
are as follows:
\begin{enumerate}
\item[(A1)]  
   $\VecL: \CalD(\VecL) \to (L^2(\dv))^m$ is self-adjoint, nonnegative, and its domain is given by
  \begin{equation*} 
     \CalD(\VecL) = \{\Vecf \in (L^2(\dv))^m \big| \, 
                                a(v) \Vecf \in (L^2(\dv))^m\} \subseteq (L^2(\dv))^m \,,
  \end{equation*}
where $a(v)$ is defined in \eqref{def-a}.
\item[(A2)] $\VecL: (L^2(a\dv))^m \to (L^2(\tfrac{1}{a}\dv))^m$ is bounded, that is, there exists a constant $\sigma_0 > 0$ such that
\begin{equation*}
     \left\|\VecL \Vecf \right\|_{(L^2(\tfrac{1}{a}\dv))^m} 
     \leq  \sigma_0 \left\|\Vecf \right\|_{(L^2(a\dv))^m} \,.
\end{equation*}
\item[(A3)] 
  $\NullL$ is finite dimensional and $\NullL\subseteq (L^p(\dv))^m$ for all $p \in [1, \infty)$. 
\item[(A4)] $\VecL$ has a spectral gap: there exists $\sigma_0 > 0$ such that
\begin{equation*}
      \La f, \,\, \VecL f\Ra 
 \geq \sigma_0 \left\|\CalP^\perp f \right\|_{(L^2(a\dv))^m}^2  
\qquad \text{for any $f \in (L^2(a\dv))^m$} \,, 
\end{equation*}
where $\CalP^\perp = {\mathcal{I}} - \CalP$ is the projection (in $(L^2(\dv))^m$) onto the null orthogonal space $(\NullL)^\perp$.
\end{enumerate}
\smallskip

Note that Assumption (A4) guarantees that $\CalL$ has a bounded inverse on $(\NullL)^\perp$. Throughout this paper, we denote $\CalL^{-1}$ as its pseudo-inverse on $(L^2(\dv))^m$. 

One operator that is of particular importance is $\CalP_1: \NullL \to \NullL$ which is defined by
\begin{equation}
       \CalP_1 (f) = \CalP((v_1+u) f) \qquad \text{for any $f \in \NullL$} \,. \nn
\end{equation}
Note that $\CalP_1$ is a symmetric operator on the finite dimension space $\NullL$. Therefore, its eigenfunctions form a complete set of basis of $\NullL$. Denote $H^+, H^-, H^0$ as the eigenspaces of $\CalP_1$ corresponding to positive, negative, and zero eigenvalues respectively and denote their dimensions as
\begin{equation*}
  \dim H^+ = \nu_+, \qquad \dim H^- = \nu_-, \qquad \dim H^0 = \nu_0 \,.  
\end{equation*}
Let $X_{+,i}, X_{-,j}, X_{0,k}$ be the associated unit eigenfunctions with $1 \leq i \leq \nu_+$, $1 \leq j \leq \nu_-$, and $1 \leq k \leq \nu_0$ for $\nu_\pm, \nu_0 \neq 0$. Note that if any of $\nu_\pm, \nu_0$ is equal to zero, then we simply do not have any eigenfunction associated with the corresponding eigenspace. 
By their definitions, these eigenfunctions satisfy 
\begin{equation}  \label{cond:X-pm-0}
\begin{aligned} 
 \La X_{\alpha, \gamma}, X_{\alpha', \gamma'} \Ra_v = \delta_{\alpha\alpha'} \delta_{\gamma\gamma'} \,, \qquad
 \La (v_1+u) X_{\alpha, \gamma}, \,\, X_{\alpha', \gamma'} \Ra_v =  0 \,\, \text{if $\alpha \neq \alpha'$ or $\gamma \neq \gamma'$} \,, 
\\
 \La (v_1+u) X_{0, j}, \,\, X_{0, k} \Ra_v = 0 \,, \quad 
 \La (v_1+u) X_{+, j}, \,\, X_{+, i} \Ra_v > 0 \,, \qquad
 \La (v_1+u) X_{-, j},  \,\, X_{-, j} \Ra_v < 0 \,, 
\end{aligned}
\end{equation}
where $\alpha \in \{+, -, 0\}$, $\gamma \in \{i,j,k\}$, $1 \leq i \leq \nu_+$, $1 \leq j \leq \nu_-$, and $1 \leq k \leq \nu_0$. These relations in particular give that 
\begin{align*}
   (v_1 + u) X_{0, j} \in (\NullL)^\perp \,,
\qquad
   j = 1, \cdots, \nu_0 \,.
\end{align*}
Therefore $\CalL^{-1} \vpran{(v_1 + u) X_{0, j}} \in (\NullL)^\perp$ is well-defined.

\subsubsection{Examples of $\CalL$.} Many well-known linear or linearized kinetic models satisfy the 
assumptions (A1)-(A4) for the collision operators. These include the classical
linearized Boltzmann equations for either single-species system or
multi-species with hard-sphere collisions and the linear neutron
transport equations. The particular equations that we use as numerical examples are the isotropic neutron transport equation (NTE) with slab geometry and the linearized BGK equation. Similar analysis can be carried out to models
satisfying (A1)-(A4) without extra difficulties.
The main structure of these two equations are as follows. 
The linear operator of the isotropic NTE is the simplest scattering operator which has the form
\begin{equation}\label{operator:NTE}
      \CalL f = f - \frac{1}{2}\int_{-1}^1 f (v) \dv \,.
\end{equation}
In this case, $a(v) = 1 + |v| = \mathcal{O}(1)$ and $(L^2(a\dv))^m$ coincides with $(L^2(\dv))^m$.

The linearized BGK operator is the linearization of the nonlinear BGK operator, which is introduced as a simplified model that captures some
fundamental behavior of the nonlinear Boltzmann equation. The
collision operator of the nonlinear BGK is defined as
\begin{equation*}
     \mathcal{Q}[F] = F - \mathcal{M}[F] \,, 
\end{equation*}
where $\mathcal{M}[F]$ is the local Maxwellian associated with $F$ defined by
\begin{equation*}
      \mathcal{M}[F] = \frac{\rho}{\sqrt{2\pi \theta}} e^{-\frac{|v-u|^2}{2\theta}} \,, 
\end{equation*}
where
\begin{equation*}
     \rho = \int_\R F \dv \,,
\qquad
     \rho u = \int_\R v F \dv \,,
\qquad
    \rho u^2 + \rho \theta = \int_\R v^2 F \dv \,. 
\end{equation*}
For a given bulk velocity $u \in \R$, define the global Maxwellian with the 
steady state $(\rho, u, \theta) = (1, u, 1/2)$ as 
\begin{equation*}
    M_u =  \frac{1}{\sqrt{\pi}} e^{-|v-u|^2} \,.
\end{equation*}
Linearizing the operator $\mathcal{Q}$ around $M$ by setting 
\begin{equation*}
      F = M_u + \sqrt{M_u} f \,, 
\end{equation*}
we obtain the linearized BGK operator 
\begin{equation*}
       \mc{L}_u f = f - m_u, 
\end{equation*}
where $m_u(v)$ is $f$ projected onto the kernel space of $\mathcal{L}_u$. In the case of the 1D linearized BGK, one has:
\begin{equation*}
    \NullL_u = \Span\{\sqrt{M_u}, \,\, v \sqrt{M_u}, \,\, v^2 \sqrt{M_u}\} \,.
\end{equation*}
Therefore, $m_u(v)$ is a quadratic function associated with a Maxwellian to $1/2$ power:
\begin{equation*}
     m_u(v) =  \big(\tilde\rho + \tilde u (v-u) + \tfrac{\tilde\theta}{2} ((v-u)^2 - 1) \big)  \sqrt{M_u} \,,
\end{equation*}
where $(\tilde \rho, \tilde u, \tilde \theta)$ are defined in the way such that first three moments of $m(v)$ agree with those of $f$: 
\begin{equation*}
     \langle f-m_u,v^k\sqrt{M_u}\rangle = \int_\R (f-m_u)v^k \sqrt{M_u}\ud{v} = 0,\quad k = 0,1,2 \,.
\end{equation*}
The half-space equation with the linearized BGK operator that centered at bulk velocity $u$ is: 
\begin{equation}\label{eq:half-space-BGK}
\begin{aligned}
     v \del_x f + &{\mathcal{L}_u} f =  0, \\
    \Vecf|_{x=0} &=  \Vecphi(v) \,,  \qquad v > 0 \,.
\end{aligned}
\end{equation}
Following the classical treatment of the half-space equations, we shift the center of the Maxwellian $M_u$ to the origin by 
performing the change of variable $v - u\to v$. The half-space 
equation~\eqref{eq:half-space-BGK} then becomes
\begin{equation}\label{eq:half-space-2}
\begin{aligned}
     (v + u) \del_x &f + {\mathcal{L}} f =  0, \\
    \Vecf|_{x=0} &=  \Vecphi(v + u) \,,  \qquad v + u > 0 \,,
\end{aligned}
\end{equation}
where
\begin{equation} \label{eq:bgk_operator}
         \CalL f = f - m(v) \,,
\qquad
          m(v) =  m_0 \,,    
\end{equation}
and the null space of $\CalL$ becomes 
\begin{equation*}
     \NullL = \Span\{\sqrt{M}, \,\, v \sqrt{M}, \,\, v^2 \sqrt{M}\} \,,
\end{equation*}
where $M$ is the global Maxwellian centered at the origin such that
\begin{equation*}
    M = M_0 =  \frac{1}{\sqrt{\pi}} e^{-v^2} \,. 
\end{equation*}

As defined in~\eqref{cond:X-pm-0}, we look for $H^{\pm,0}$ decomposition of $\NullL$. For this particular case one could write down the basis functions explicitly. Following~\cite{CoronGolseSulem:88}, we define
\begin{equation}\label{eqn:def_chi}
  \begin{cases}
    \chi_0 =  \frac{1}{6^{1/2} \pi^{1/4}}\left(2v^2-3\right) \exp(-v^2/2)\\[2pt]
    \chi_\pm = \frac{1}{6^{1/2} \pi^{1/4}}\left(\sqrt{6}v\pm 2v^2\right) \exp(-v^2/2)
  \end{cases}\,.
\end{equation}
It is easy to show that 
\begin{equation}\label{eqn:chi_u}
\begin{cases}
\langle\chi_\alpha,\chi_\beta\rangle_v=\int_\R \chi_\alpha\chi_\beta\ud{v} = \delta_{\alpha\beta} \,, \\
\langle (v + u) \chi_\alpha,\chi_\beta\rangle_v=0 \,, \qquad \alpha \neq \beta \,, \\
\langle(v+u)\chi_0,\chi_0\rangle_v = u_0 = u \,,\\
\langle(v+u)\chi_+,\chi_+\rangle_v = u_+ = u+c \,, \\
\langle(v+u)\chi_-,\chi_-\rangle_v = u_- = u-c \,,\\
\end{cases}
\end{equation}
where  $\alpha, \beta \in \{+, -, 0\}$, $c = \sqrt{3/2}$,  and 
\begin{equation*}
     \La f, g\Ra_v = \int_\R f g \dv \,. 
\end{equation*}
%
%
Using these new basis functions, we can decompose $\NullL$ into subspaces: $\NullL = H^+ \oplus H^- \oplus H^0$ with: 
\begin{equation*}
H^+ = \Span\left\{\chi_\beta | \,\, u_\beta>0\right\}, 
\quad
H^- = \Span\left\{\chi_\beta | \,\, u_\beta < 0\right\}, 
\quad
H^0 = \Span\left\{\chi_\beta | \,\, u_\beta = 0\right\},  
\end{equation*}
where again $\beta \in \{+, -, 0\}$. 
For each fixed $u \in \R$, denote the dimensions of these subspaces as
\begin{equation*}
  \dim H^+ = \nu_+, \qquad \dim H^-= \nu_-, \qquad \dim H^0 = \nu_0 \,.  
\end{equation*}
Note that $\nu_\pm, \nu_0$ change with $u$. In particular, we have the following categories: 
\begin{align}\label{eqn:SevenCases}
\begin{cases}
u <-c: & (\dim H^+, \dim H^-, \dim H^0) = (0, 3, 0) \,, \\
u = -c: & (\dim H^+, \dim H^-, \dim H^0) = (0, 2, 1) \,, \\
-c < u < 0: & (\dim H^+, \dim H^-, \dim H^0) = (1, 2, 0)\,,\\ 
u = 0: & (\dim H^+, \dim H^-, \dim H^0) = (1, 1, 1) \,, \\
0 < u < c: & (\dim H^+, \dim H^-, \dim H^0) = (2, 1, 0) \,, \\
u = c: & (\dim H^+, \dim H^-, \dim H^0) = (2, 0, 1) \,, \\
u > c: & (\dim H^+, \dim H^-, \dim H^0) = (3, 0, 0)\,.
\end{cases}
\end{align}
This gives an explicit example that shows the structure of $\NullL$ changes with $u$.

\subsection{Damped Linear Operator $\CalL_d$} 
The main difficulty in both analysis and numerics is the
non-coercivity of $\CalL$. Although in some cases this degeneracy of
$\CalL$ can be handled by carefully choosing appropriate function
spaces for the variational formulation, we prefer to work with
strictly dissipative operators. To this end, we utilize the idea
developed in \cites{Golse:08, UkaiYangYu:03} to modify the original
equation \eqref{eq:half-space-2} by adding in damping terms. The
particular damping terms are chosen in the way such that we can easily
recover the undamped equation \eqref{eq:half-space-2} for any incoming
data and such that the damped operator is symmetric. The particular
damped operator we introduce is
\begin{equation}\label{eqn:def_damping}
  \begin{aligned}
    \VecL_d f = &\CalL f + \alpha \sum_{k=1}^{\nu_+} (v_1 + u) X_{+,k}
    \La (v_1 + u)X_{+,k}, f \Ra_{v}\\
    & + \alpha \sum_{k=1}^{\nu_-} (v_1 + u) X_{-,k} \La (v_1 +
    u)X_{-,k}, f \Ra_{v}
    + \alpha \sum_{k=1}^{\nu_0} (v_1  + u) X_{0,k} \La (v_1 + u) X_0, f \Ra_{v}\\
    & + \alpha \sum_{k=1}^{\nu_0} (v_1 + u) \CalL^{-1}((v_1 +
    u)X_{0,k})\La (v_1 + u) \CalL^{-1}((v_1 + u)X_{0,k}), f \Ra_{v}.
   \end{aligned}
\end{equation}
Here the constant $\alpha$ satisfies that $0 < \alpha \ll 1$. The size of $\alpha$ only depends on $\CalL$. The main property of $\CalL_d$ is its coercivity as stated in the following lemma:
\begin{lem} \label{lem:prop-L-d}
Let $\CalL$ be the linear operator that satisfies Assumptions (A1)-(A4). Then there exist two constants $\sigma_1, \alpha_0  > 0$ such that for any $0 < \alpha \leq \alpha_0$ we have
\begin{align*}
   \viint{f}{\CalL_d f} 
\geq 
   \sigma_1 \norm{f}^2_{(L^2(a\dv))^m}
\qquad
  \text{for any $f \in \CalD(\CalL)$} \,.
\end{align*}
\end{lem}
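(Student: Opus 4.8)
The plan is to prove coercivity by splitting $f = \CalP f + \CalP^\perp f$, controlling the $\CalP^\perp$--part by the spectral gap (A4) and the $\CalP$--part by a finite-dimensional positivity computation, then gluing the two together using the self-adjointness of $\CalL_d$. First I would record that $\CalL_d$ is self-adjoint: $\CalL$ is self-adjoint by (A1), and every damping term in \eqref{eqn:def_damping} has the rank-one self-adjoint form $\psi\,\La\psi,\cdot\Ra_v$ with $\psi$ one of the finitely many velocity functions $(v_1+u)X_{\pm,k}$, $(v_1+u)X_{0,k}$, or $(v_1+u)\CalL^{-1}((v_1+u)X_{0,k})$. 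Pairing \eqref{eqn:def_damping} against $f$ then gives
\[
  \viint{f}{\CalL_d f} \;=\; \viint{f}{\CalL f} + \alpha\, Q(f),
  \qquad
  Q(f) := \sum_{\psi}\La\psi,f\Ra_v^2 \;\ge\; 0,
\]
the sum running over the directions $\psi$ above. Writing $g=\CalP f\in\NullL$, $h=\CalP^\perp f$, and abbreviating $\norm{\cdot}_a=\norm{\cdot}_{(L^2(a\dv))^m}$, we have $\CalL g=0$, hence $\viint{f}{\CalL f}=\viint{h}{\CalL h}\ge\sigma_0\norm{h}_a^2$ by (A4), while $Q(f)\ge0$. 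So the $\CalP^\perp$--part of $f$ is already controlled, and the whole problem reduces to squeezing a bound on $\norm{g}_a^2$ out of $\alpha Q$.

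The crux is to show that $Q$ restricted to $\NullL$ is positive definite. I would expand $g=\sum_i c_{+,i}X_{+,i}+\sum_j c_{-,j}X_{-,j}+\sum_k c_{0,k}X_{0,k}$ in the $\CalP_1$--eigenbasis. The relations \eqref{cond:X-pm-0} give $\La(v_1+u)X_{\pm,i},g\Ra_v=\mu_{\pm,i}\,c_{\pm,i}$ with $\mu_{\pm,i}=\La(v_1+u)X_{\pm,i},X_{\pm,i}\Ra_v\neq0$, and $\La(v_1+u)X_{0,k},g\Ra_v=0$, so the first three groups of damping terms see only the $H^\pm$ components of $g$. The $H^0$ components are recovered from the fourth group: since $(v_1+u)X_{0,k}\in(\NullL)^\perp$ and $\CalL^{-1}$ is self-adjoint, $\La(v_1+u)\CalL^{-1}((v_1+u)X_{0,k}),g\Ra_v$ equals $\sum_{k'}G_{kk'}c_{0,k'}$ plus a term linear in the $c_{+,i}$ and $c_{-,j}$, where $G_{kk'}=\La\CalL^{-1}((v_1+u)X_{0,k}),(v_1+u)X_{0,k'}\Ra_v$. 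This $G$ is exactly the Gram matrix of the linearly independent family $\{(v_1+u)X_{0,k}\}\subset(\NullL)^\perp$ for the inner product $(p,q)\mapsto\La\CalL^{-1}p,q\Ra_v$, which is positive definite by (A4) (indeed $\La\CalL^{-1}p,p\Ra_v\ge\sigma_0\norm{\CalL^{-1}p}_a^2>0$ for $p\in(\NullL)^\perp\setminus\{0\}$); hence $G$ is symmetric positive definite. It follows that $Q(g)=0$ forces first all $c_{\pm}=0$, and then --- through invertibility of $G$ --- all $c_{0,k}=0$, i.e. $g=0$. Since $\NullL$ is finite dimensional (A3) and all norms on it are equivalent, this gives $Q(g)\ge c_\ast\norm{g}_a^2$ for some $c_\ast>0$ depending only on $\CalL$.

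To finish I would absorb the cross terms between $g$ and $h$. Using $(a+b)^2\ge\tfrac12a^2-b^2$ termwise,
\[
  Q(f)=\sum_\psi\bigl(\La\psi,g\Ra_v+\La\psi,h\Ra_v\bigr)^2 \;\ge\; \tfrac12 Q(g)-\sum_\psi\La\psi,h\Ra_v^2 \;\ge\; \tfrac12 c_\ast\norm{g}_a^2-C_\psi\norm{h}_a^2,
\]
where $C_\psi=\sum_\psi\norm{\psi}^2_{(L^2(\tfrac{1}{a}\dv))^m}$ is finite because all the damping directions lie in $(L^2(\tfrac{1}{a}\dv))^m$ (immediate in the concrete examples, where either $\VV$ is bounded or $\NullL$ consists of Maxwellian-weighted polynomials, and in general a consequence of $\NullL\subseteq\CalD(\CalL)$ together with (A2), (A4)). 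Combining with the first paragraph, $\viint{f}{\CalL_d f}\ge(\sigma_0-\alpha C_\psi)\norm{h}_a^2+\tfrac{\alpha c_\ast}{2}\norm{g}_a^2$; taking $\alpha_0=\sigma_0/(2C_\psi)$ makes the coefficient of $\norm{h}_a^2$ at least $\sigma_0/2$ for every $0<\alpha\le\alpha_0$, and since $\norm{f}_a^2\le2\norm{g}_a^2+2\norm{h}_a^2$ this yields $\viint{f}{\CalL_d f}\ge\sigma_1\norm{f}_a^2$ with $\sigma_1=\tfrac14\min(\sigma_0,\alpha c_\ast)$. (Note $\sigma_1$ is of order $\alpha$ for small $\alpha$, which is necessary since $\CalL_d\to\CalL$ degenerates on $\NullL$ as $\alpha\to0$.)

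The step I expect to be the main obstacle is the positivity of $Q$ on $\NullL$ coming from the fourth group of damping terms --- concretely, the linear independence of $\{(v_1+u)X_{0,k}\}$ in $(\NullL)^\perp$ and the positive definiteness of the $\CalL^{-1}$--Gram matrix $G$. This is exactly where the (somewhat unusual) $\CalL^{-1}$--damping is indispensable: the bare terms $\La(v_1+u)X_{0,k},f\Ra_v$ annihilate $\NullL$ and so cannot detect the $H^0$ directions, and one must invoke (A4) through the invertibility of $\CalL$ on $(\NullL)^\perp$. The remaining point --- that the damping directions belong to $(L^2(\tfrac{1}{a}\dv))^m$ --- is a minor technicality under the standing assumptions.
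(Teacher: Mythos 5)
Your proof is correct and follows essentially the same route as the paper: decompose $f$ into its $\NullL$ and $(\NullL)^\perp$ parts, use the spectral gap (A4) for the orthogonal part, use the $X_{\pm}$ damping terms for the $H^\pm$ components and the $\CalL^{-1}$ damping terms for the $H^0$ components, and absorb the cross terms by taking $\alpha$ small. Your Gram-matrix argument in fact supplies the justification for the strict positivity of the matrix $\bigl(\La (v_1+u)\CalL^{-1}((v_1+u)X_{0,k}),\,X_{0,m}\Ra_v\bigr)$, which the paper asserts without proof, and your observation that $\sigma_1$ must degenerate like $\alpha$ is consistent with what the paper's estimate actually yields.
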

\begin{proof} 
By the definition of $\CalL_d$, we have
\begin{align*}
    \viint{f}{\CalL_d f}
    = &\La \Vf, \VecL \Vf \Ra 
        +  \alpha \sum_{k=1}^{\nu_+}  
               \La (v_1 + u) X_{+,k}, f \Ra^2
        + \alpha \sum_{k=1}^{\nu_-}
             \La (v_1 + u) X_{-,k},  f \Ra^2
       + \alpha  \sum_{k=1}^{\nu_0} 
             \La (v_1 + u) X_0, f \Ra^2      
\\
    & + \alpha  \sum_{k=1}^{\nu_0}
              \La (v_1 + u) \CalL^{-1}((v_1 + u)X_{0,k}), f\Ra^2  \,.\end{align*}
Write
\begin{equation*}
      f = f^\perp + \sum_{i=1}^{\nu_+} {f_{+,i}} X_{+,i}
           + \sum_{j=1}^{\nu_-} {f_{-,j}} X_{+,j}
           + \sum_{k=1}^{\nu_0} {f_{0,k}} X_{0,k} \,, 
\end{equation*}
where $f^\perp = \tilde\CalP f \in (\NullL)^\perp$. 
By Assumption (A4), if we chose $0 < \alpha \ll 1$, then
\begin{equation}
     \label{bound-B-1-1}
\begin{aligned}
  \viint{f}{\CalL_d f} 
\geq \, 
  & \sigma_0
  \|f^\perp\|_{(L^2(a\dv))^m}^2 + \frac{\alpha}{4}
  \sum_{k=1}^{\nu_+} \gamma_{+,k}^2 f_{+,k}^2 +
  \frac{\alpha}{4} \sum_{k=1}^{\nu_+} \gamma_{-,k}^2 f_{-,k}^2
\\
  & - \frac{\alpha}{4} \sum_{k=1}^{\nu_+} \La (v_1 + u) X_{+,k},
  f^\perp \Ra^2
  - \frac{\alpha}{4} \sum_{k=1}^{\nu_-} \La (v_1 + u) X_{-,k},
  f^\perp \Ra^2
  \\
  \geq \, & 
    \frac{\sigma_0}{2} \|f^\perp\|_{(L^2(a\dv))^m}^2 
    + \frac{\alpha}{4} \sum_{k=1}^{\nu_+} \gamma_{+,k}^2  f_{+,k}^2 
    + \frac{\alpha}{4} \sum_{k=1}^{\nu_-} \gamma_{-,k}^2 f_{-,k}^2 \,,
\end{aligned}
\end{equation}
where $\gamma_{\pm}$'s are defined as
\begin{equation}
   \label{notation:1}
\begin{aligned}   
  &\gamma_{+,i} := \La (v_1 + u) X_{+,i}, X_{+,i} \Ra_v > 0 \,, \quad
  && 0 \leq i \leq \nu_+ \,,
  \\
  & \gamma_{-,j} := - \La (v_1 + u) X_{-,j}, X_{-,j} \Ra_v > 0\,,
  \quad && 0 \leq j \leq \nu_- \,.
\end{aligned}
\end{equation}
In addition, if $\nu_0 \neq 0$, then
\begin{equation}
  \label{bound-B-1-2}
  \begin{aligned}
    \viint{f}{\CalL_d f} 
\geq \, 
   & \sigma_0 \|f^\perp\|_{(L^2(a\dv))^m}^2
        + \alpha  \sum_{k=1}^{\nu_0}
              \La (v_1 + u) \CalL^{-1}((v_1 + u)X_{0,k}), f\Ra^2 
\\
\geq \,
      & \frac{\sigma_0}{2} \|f^\perp\|_{(L^2(a\dv))^m}^2
         + \frac{\alpha}{4\nu_0} \left(\sum_{k,m=1}^{\nu_0} 
              \La (v_1 + u) \CalL^{-1}((v_1 + u)X_{0,k}), X_{0,m} 
               \Ra_v f_{0,m}\right)^2  
\\
     & - \frac{\alpha}{4}  \sum_{k=1}^{\nu_0}
              \La (v_1 + u) \CalL^{-1}((v_1 + u)X_{0,k}), \sum_{m=1}^{\nu_+} f_{+,k}X_{+,k}\Ra^2             
\\
     & - \frac{\alpha}{4}  \sum_{k=1}^{\nu_0}
              \La (v_1 + u) \CalL^{-1}((v_1 + u)X_{0,k}), \sum_{m=1}^{\nu_+} f_{-,k}X_{-,k}\Ra^2             
\\
     & - \frac{\alpha}{4}  \sum_{k=1}^{\nu_0}
              \La (v_1 + u) \CalL^{-1}((v_1 + u)X_{0,k}), f^\perp\Ra^2   \,.
\end{aligned}
\end{equation}
Since the matrix $\big(\La (v_1 + u) \CalL^{-1}((v_1 + u)X_{0,k}), X_{0,m} \Ra_v \big)$ is strictly positive, there exists a constant $c_0 > 0$ such that 
\begin{equation}
    \sum_{k=1}^{\nu_0}\biggl(\sum_{m=1}^{\nu_0} 
              \La (v_1 + u) \CalL^{-1}((v_1 + u)X_{0,k}), X_{0,m} 
               \Ra_v f_{0,m}\biggr)^2
\geq c_0 \sum_{k=1}^{\nu_0} f_{0,m}^2 \,.
\end{equation}
Hence by multiplying \eqref{bound-B-1-1} by a large enough number and adding it to \eqref{bound-B-1-2}, we have
\begin{equation}
    \label{est:1}
     \viint{f}{\CalL_d f} 
\geq 
    \sigma_1 \|\Vf\|_{(L^2(a\dv))^m}^2
\qquad \text{for some $\sigma_1 > 0$ } \,.
\end{equation}
provided $0 < \alpha \ll 1$.
\end{proof}

\subsection{Variational Formulation}
In this part we present the variational formulation for the half-space equation. First, we state the full equation that we want to study in this paper using the notation of $H^\pm, H^0$,. Suppose $\CalL$ is a linear operator in $v$ that satisfies (A1)-(A4). Our goal is to prove the well-posedness of the following equation and then construct efficient numerical schemes and obtain estimate of its accuracy:
\begin{equation}\label{eq:half-space-general}
  \begin{aligned}
     (v_1+u) \partial_x &f + \mc{L} f = 0, \quad && x \in [0, +\infty), \,\, v \in \VV  \,, \\
    f\big|_{x=0} &= \phi(v), \quad && v_1+u > 0 \,, \\
    f - f_\infty \in &(L^2(\dv\dx))^m \,,
  \end{aligned}
\end{equation}
for some $f_\infty \in H^+ \oplus H^0$. The particular formulation
about the end-state $f_\infty$ was given in \cite{CoronGolseSulem:88}
(for single species $m = 1$) where the authors proved the
well-posedness of the half-space linearized Boltzmann equation:
\begin{thm}[\cite{CoronGolseSulem:88}] \label{thm-CGS}
Let $\CalL$ be the linearized Boltzmann operator with a hard-sphere collision 
kernel and the incoming data $\phi \in L^2(a(v) \Id_{v_1+u>0}\dv)$. Then there exists a constant $\beta > 0$ and a unique 
$f_\infty \in H^+ \oplus H^0$ such that 
equation \eqref{eq:half-space-1} has a unique solution $f$ which satisfies
\begin{equation*}
     f -  f_\infty \in L^2(e^{2\beta x} \dx; L^2(a \dv)) \,, 
\end{equation*}
where $a(v) = 1 + |v|$. 
\end{thm}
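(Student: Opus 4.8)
The plan is to follow the three-part strategy that organizes the whole paper: pass from \eqref{eq:half-space-1} to a damped half-space problem, solve the damped problem by an even--odd variational argument, and then reconstruct the solution of the undamped equation together with its end-state. First I would replace $\CalL$ by the damped operator $\CalL_d$ of \eqref{eqn:def_damping} with $\alpha\le\alpha_0$. By Lemma~\ref{lem:prop-L-d}, $\CalL_d$ is coercive on $(L^2(a\dv))^m$, and the extra terms are tailored so that any solution lying in $(L^2(\dv\dx))^m$ automatically has vanishing end-state. The first goal is then: for every $\psi\in L^2(a\,\Id_{v_1+u>0}\dv)$ the damped half-space equation $(v_1+u)\del_x f+\CalL_d f=0$ with $f|_{x=0,\,v_1+u>0}=\psi$ and $f\in(L^2(\dv\dx))^m$ has a unique solution, with $\|f\|$ controlled by $\|\psi\|$.

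To do this I would use the even--odd variational formulation, which is the delicate step. Split $f=f^E+f^O$ into even and odd parts under the reflection $v_1+u\mapsto-(v_1+u)$ and, following \cite{EggerSchlottbom:12}, place them in different spaces: the even part only in a weighted $L^2$ in $(x,v)$, the odd part in a graph space that in addition controls $(v_1+u)\del_x f^O$. Eliminating $f^O$ turns the transport equation into a weak, second-order-in-$x$ problem for $f^E$ whose bilinear form $\CalB$ incorporates the incoming trace at $x=0$ and the decay at $x=\infty$. The coercivity of $\CalL_d$ together with the dissipative structure of the streaming term should yield a Babu\v{s}ka-type inf--sup bound for $\CalB$ and its transpose, hence well-posedness of the damped problem by the generalized Lax--Milgram theorem. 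The main obstacle is exactly here: one must choose the pair of spaces so that the trace on $\{v_1+u>0\}$ and the end-state at $x=\infty$ are simultaneously well-defined and continuous, and verify the inf--sup condition uniformly, in particular along the zero-flux directions $H^0$ --- which is precisely why the last term $\alpha\sum_k(v_1+u)\CalL^{-1}((v_1+u)X_{0,k})\La(v_1+u)\CalL^{-1}((v_1+u)X_{0,k}),f\Ra_v$ is built into $\CalL_d$, and why $\CalL^{-1}((v_1+u)X_{0,k})$ is well-defined by (A4) and \eqref{cond:X-pm-0}.

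Next I would recover \eqref{eq:half-space-1}. Note $\CalL_d-\CalL$ is finite rank, supported on $\Span\{(v_1+u)X_{+,k},\,(v_1+u)X_{-,k},\,(v_1+u)X_{0,k},\,(v_1+u)\CalL^{-1}((v_1+u)X_{0,k})\}$. Solve the damped problem with incoming datum $\phi$ modified by an element of $(H^+\oplus H^0\oplus\Span\{\CalL^{-1}((v_1+u)X_{0,k})\})$ restricted to $\{v_1+u>0\}$ with undetermined coefficients, and let $f_\infty\in H^+\oplus H^0$ be free. Testing the damped equation against each generating mode and using \eqref{cond:X-pm-0} shows that the damping functionals $x\mapsto\La(v_1+u)X_{\pm,k},f\Ra_v$, $x\mapsto\La(v_1+u)X_{0,k},f\Ra_v$ and $x\mapsto\La(v_1+u)\CalL^{-1}((v_1+u)X_{0,k}),f\Ra_v$ satisfy a closed, constant-coefficient ODE system; requiring them all to vanish identically in $x$ becomes a linear system for the undetermined coefficients and for $f_\infty$, which one checks is uniquely solvable. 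Along such a solution $\CalL_d f=\CalL f$, so $f$ solves \eqref{eq:half-space-1} with incoming trace $\phi$ and $f-f_\infty\in(L^2(\dv\dx))^m$. Uniqueness follows from the flux identity $\frac{\ud}{\ud x}\La(v_1+u)f,f\Ra_v=-2\La f,\CalL f\Ra_v\le 0$ together with (A4): a solution with zero incoming data and end-state in $H^+\oplus H^0$ must vanish, and the number of constraints hidden in $f_\infty\in H^+\oplus H^0$ matches the characteristic count of \cite{CoronGolseSulem:88}.

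Finally, for the exponential decay one uses the hard-sphere structure. Here $a(v)=1+|v|$ and $\CalL$ has a genuine spectral gap in $(L^2(a\dv))^m$. Testing the equation for $f-f_\infty$ against $e^{2\beta x}(f-f_\infty)$ and using the gap to absorb the streaming term yields a differential inequality forcing $f-f_\infty\in L^2(e^{2\beta x}\dx;L^2(a\dv))$ for some small $\beta>0$ depending only on $\CalL$ and $u$; this is the only place where the hard-sphere kernel, rather than the abstract assumptions (A1)--(A4), is actually used. The whole argument above, through the recovery and uniqueness step, delivers only $L^2(\dv\dx)$-well-posedness; upgrading to the weighted exponential space is what requires this last ingredient.
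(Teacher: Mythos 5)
Your overall route is the one the paper itself takes: Theorem~\ref{thm-CGS} is quoted from \cite{CoronGolseSulem:88}, and the paper's Section~3 re-derives its $L^2$ version exactly by damping, the even--odd variational form, and reconstruction. The problem is that the step you dispose of with ``which one checks is uniquely solvable'' is the actual core of that argument. In the paper this is Lemma~\ref{lem:for-C} together with the nonsingularity of the matrix $C$ in \eqref{def:C}: one must show that if $\VU_+(f)$ and $\VU_0(f)$ vanish at $x=0$ then the whole vector $\VU(f)$ vanishes identically, and this requires analyzing the constant-coefficient system \eqref{ode-U-2}, counting its negative eigenvalues (exactly $\nu_-+\nu_0$, obtained via congruence transformations and Sylvester's law of inertia), and proving that the matrix ${\bd E}$ built from the corresponding eigenvector components is nonsingular; only then, combined with a uniqueness statement for the undamped problem, does one conclude that your linear system for the correction coefficients (equivalently $C\eta=(\VU_+,\VU_0)^{\TT}|_{x=0}$) is uniquely solvable and that $f_\infty$ exists and is unique. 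Note also that $f_\infty$ is not an independent unknown: in the reconstruction it is forced to equal $\sum_i\eta_{+,i}X_{+,i}+\sum_j\eta_{0,j}X_{0,j}$, so your bookkeeping (corrections in $H^+\oplus H^0\oplus\Span\{\CalL^{-1}((v_1+u)X_{0,k})\}$ \emph{plus} a free $f_\infty$) has more unknowns than the $\nu_++\nu_0$ degrees of freedom the argument actually provides, and the matching of unknowns to constraints is precisely what is not checked.

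The final step would also fail as written. Testing the undamped equation for $f-f_\infty$ against $e^{2\beta x}(f-f_\infty)$ and ``using the gap to absorb the streaming term'' cannot work, because (A4) controls only $\CalP^\perp(f-f_\infty)$, while $\beta\La (v_1+u)(f-f_\infty),(f-f_\infty)\Ra$ contains the $\NullL$-components, whose decay is exactly the delicate issue. The paper instead proves the weighted estimate at the damped level (Proposition~\ref{prop:wellposed}(c): conjugating by $e^{\beta x}$ perturbs $\CalB$ by $\beta\La(v_1+u)g,\psi\Ra_{x,v}$, which is absorbed because $\CalL_d$ is coercive on all of $(L^2(a\dv))^m$) and transfers it to the undamped solution through $f_\phi-f_{\phi,\infty}=f-g$ with $f,g$ damped solutions; your own framework permits the same fix, but not the direct energy estimate you describe. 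A lesser point: eliminating the odd part to get a second-order problem for the even part is not available here, since neither $\CalL$ (linearized about a Maxwellian centered at the origin, with parity taken about $v_1=-u$) nor the damping terms commute with the even--odd reflection; the paper keeps the first-order mixed form \eqref{def-B} on the space \eqref{def-Gamma}, with the graph regularity and the trace carried by the even part $f^+$, and verifies the inf-sup condition with the explicit test functions $\psi_1=f$ and $\psi_2$ proportional to $a^{-1}(v_1+u)\del_x f^+$.
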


\begin{rmk}
The main result in \cite{CoronGolseSulem:88} is actually stronger than Theorem \ref{thm-CGS} 
where $f-f_\infty$ is shown to be in $L^\infty(e^{2\beta x} \dx; L^2(\dv))$. 
Here we content ourselves with the $L^2$-weighted space (in $x$) since $L^2$ suffices our needs in proving the quasi-optimal convergence of our numerical scheme.
\end{rmk}

We will use $\VV = \R^3$ as the setting to explain the variational formulation. Other spaces for $v$ will work in a similar way. Let $u \in \R$ be given. 
We use the damped operator $\CalL_d$ and obtain the modified equation as
\begin{align}\label{eq:half-space-damping}
     (v_1 + u) \del_x f &+ \VecL_d f  = 0 \,,   \nn
\\
     f \big|_{x=0} &= \phi(v_1) \,,  
\hspace{1cm} 
    v_1+u > 0 \,.
\end{align}
We define the shifted ``even'' and ``odd'' parts of a function as 
\begin{equation}
    \label{even-odd}
\begin{aligned}    
    f^+(v) = \frac{f(v_1, v_2, v_3) + f(-2u - v_1, v_2, v_3)}{2} \,,
\qquad
    f^-(v) = \frac{f(v_1, v_2, v_3) - f(-2u - v_1, v_2, v_3)}{2}
\end{aligned}   
\end{equation}
such that $f = f^+ + f^-$ and 
\begin{equation*}
     f^\pm(-u + v_1, v_2, v_3) = \pm f^\pm(-u - v_1, v_2, v_3) \,.
\end{equation*}
Define the function space 
\begin{equation} \label{def-Gamma}
   \Gamma = \bigl\{f \in (L^2(a\dv\dx))^m \; \big| \; (v_1+u) \del_x f^+ \in (L^2(\tfrac{1}{a} \dv\dx))^m \bigr\}, 
\end{equation}
which  is a Hilbert space with the inner product 
\begin{equation*}
     \La f, g \Ra_\Gamma
     =\int_{\R} \int_{\R^3} f \cdot g  \, a\dv\dx 
       + \int_{\R} \int_{\R^3}  (v_1 + u) \del_x \Vf^+ \cdot (v_1+u) \del_x g^+ \,  \tfrac{1}{a} \dv\dx \,.
\end{equation*}
Thus the norm of $\Gamma$ is equivalent to
\begin{equation*} 
       \| \Vf \|_{(L^2(a \dv\dx))^m}
       + \|(v_1 + u) \del_x \Vf^+\|_{(L^2(\tfrac{1}{a} \dv\dx))^m}\,.
\end{equation*}
Moreover, every element $g \in \Gamma$ has a well-defined trace:
\begin{equation}
     \CalT: \Gamma \to (L^2(|v_1 + u| \dv))^m
\end{equation}
such that
\begin{equation}
     \CalT g =  g^+ \big|_{x=0} \,, \qquad \text{for all $g \in C([0, \infty); (L^2(a\dv))^m)$} \,,
\end{equation}
and
\begin{equation}
     \int_{\R^3} |v_1 + u| |g^+|^2 \dv < \infty \,.
\end{equation}

Now we define a bilinear operator $\CalB: \Gamma \times \Gamma \to \R$ such that
\begin{equation}
    \label{def-B}
\begin{aligned}
    \CalB(\Vf, \psi) 
    = &- \La \Vf^-, (v_1 + u) \del_x \psi^+\Ra_{x,v} 
      + \La (v_1 + u) \del_x \Vf^+, \psi^-\Ra_{x,v}
      + \La \psi, \CalL_d\Vf\Ra_{x,v} 
      + \La |v_1+u| \Vf^+, \psi^+ \Ra_{x=0}
\\
  = &- \La \Vf^-, (v_1 + u) \del_x \psi^+\Ra_{x,v} 
      + \La (v_1 + u) \del_x \Vf^+, \psi^-\Ra_{x,v}
      + \La \psi, \CalL\Vf\Ra_{x,v} 
\\
     &+  \alpha \sum_{k=1}^{\nu_+} 
              \La\La (v_1 + u) X_{+,k},  \psi \Ra_v , \,\,
               \La (v_1 + u)X_{+,k}, f \Ra_v \Ra_x
\\
     & + \alpha  \sum_{k=1}^{\nu_-}              
              \La \La (v_1 + u) X_{-,k}, \psi \Ra_v , \,\,
             \La (v_1 + u)X_{-,k},  f \Ra_v \Ra_x
\\
    & +  \alpha \sum_{k=1}^{\nu_0} 
             \La\La (v_1 + u) \CalL^{-1}((v_1 + u)X_{0,k}), \psi \Ra_v , \,\,
              \La (v_1 + u) \CalL^{-1}((v_1 + u)X_{0,k}),  f\Ra_v \Ra_x    
\\
    &+ \alpha \sum_{k=1}^{\nu_0} 
          \La\La (v_1 + u) X_{0,k}, \psi \Ra_v, \,\, 
          \La (v_1 + u) X_{0,k}, f \Ra_v \Ra_x      
    + \La |v_1+u| \Vf^+, \psi^+ \Ra_{x=0}   \,.                       
\end{aligned}
\end{equation}
Recall that the inner product $\La \cdot, \cdot \Ra_{x,v}$ is defined in~\eqref{notation}. 
It is straightforward to check by using integration by parts and
symmetry that the variational formulation of
\eqref{eq:half-space-damping} has the form
\begin{equation}
      \CalB(f, \psi) = l(\psi) \,, \qquad \text{for every $\psi \in \Gamma$} \,.
      \label{variational}
\end{equation}
Here the linear operator $l(\cdot)$ is given by
\begin{equation}
     l(\psi) = 2 \int_{v_1+u>0} (v_1+u) \, \phi \, \psi^+\dv \,, \label{def-l}
\end{equation}
where $\phi$ is the given incoming data and $\psi^+$ is the even (with
respect to $-u$) part of $\psi$ as defined in \eqref{even-odd}.

\section{Well-posedness}\label{sec:main}
In this section we show the well-posedness of the half-space equation~\eqref{eq:half-space-general}. The proof will be done in two steps: first, we use the variational form~\eqref{variational} to show the well-posedness of the damped equation~\eqref{eq:half-space-damping}. Then we construct recovering procedures to find the solution to the original half-space equation. 

\subsection{Solution of the damped equation}
The main tool we use to show the well-posedness of the weak formulation \eqref{variational} is to use the Babu\v{s}ka-Aziz lemma \cite{BabuskaAziz:72}. There are two parts in this lemma and we recall its statement below.

\begin{thm}[Babu\v{s}ka-Aziz]\label{thm-BA}
Suppose $\Gamma$ is a Hilbert space and $\CalB: \Gamma \times \Gamma \to \R$ is a bilinear operator on $\Gamma$.  Let $l: \Gamma \to \R$ be a bounded linear functional on $\Gamma$. 
\smallskip

\noindent (a) If $\CalB$ satisfies the boundedness and inf-sup conditions on $\Gamma$ such that
\begin{itemize}
\item  there exists a constant $c_0 > 0$ such that $|\CalB (f, g)| \leq c_0 \|f\|_\Gamma \|g\|_\Gamma$ for all $f, g \in \Gamma$; 

\item there exists a constant $\kappa_0 > 0$ such that 
\begin{equation}
    \label{cond:inf-sup}
\begin{aligned}
     \sup_{\|\Vf\|_{\Gamma}=1} \CalB(\Vf, \psi) \geq \kappa_0 \|\psi\|_\Gamma \,, \qquad &\text{for any $\psi \in \Gamma$} \,, 
\\
     \sup_{\|\psi\|_{\Gamma}=1} \CalB(\Vf, \psi) \geq \kappa_0 \|\Vf\|_\Gamma \,,  \qquad &\text{for any $f \in \Gamma$}
\end{aligned}       
\end{equation}
for some constant $\kappa_0 > 0$. 

\end{itemize}
then there exists a unique $f \in \Gamma$ which satisfies 
\begin{equation*}
     \CalB (f, \psi) = l(\psi) \,,
\qquad \text{for any $\psi \in \Gamma$} \,.      
\end{equation*}

\noindent (b) Suppose $\Gamma_N$ is a finite-dimensional subspace of $\Gamma$. If in addition $\CalB: \Gamma_N \times \Gamma_N \to \R$ satisfies the inf-sup condition on $\Gamma_N$, then there exists a unique solution $f_N$ such that
\begin{equation*}
     \CalB (f_N, \psi_N) = l(\psi_N) \,,
\qquad \text{for any $\psi_N \in \Gamma_N$} \,. 
\end{equation*} 
Moreover, $f_N$ gives a quasi-optimal approximation to the solution $f$ in (a), that is, there exists a constant $\kappa_1$ such that
\begin{equation*}
    \|f - f_N\|_\Gamma \leq \kappa_1 \inf_{w \in \Gamma_N} \|f - w\|_{\Gamma} \,. 
\end{equation*}
\end{thm}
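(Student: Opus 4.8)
The statement above is the classical Babu\v{s}ka--Aziz theorem, so my plan is the standard functional-analytic argument organized around the Riesz representation theorem and the closed-range theorem. For part (a), I would first use the boundedness of $\CalB$ together with Riesz representation to associate to $\CalB$ a bounded linear operator $A : \Gamma \to \Gamma$ defined by $\La A f, \psi \Ra_\Gamma = \CalB(f, \psi)$ for all $f, \psi \in \Gamma$; the bound $|\CalB(f,g)| \le c_0 \|f\|_\Gamma \|g\|_\Gamma$ gives $\|A\| \le c_0$. The two inequalities in \eqref{cond:inf-sup} then translate into operator statements. Since $\sup_{\|\psi\|_\Gamma = 1} \La A f, \psi \Ra_\Gamma = \|A f\|_\Gamma$, the second inequality says $\|A f\|_\Gamma \ge \kappa_0 \|f\|_\Gamma$, which makes $A$ injective with closed range; and since $\La A f, \psi \Ra_\Gamma = \La f, A^* \psi \Ra_\Gamma$, the first inequality says $\|A^* \psi\|_\Gamma \ge \kappa_0 \|\psi\|_\Gamma$, so $A^*$ is injective and hence $\overline{\mathrm{Range}(A)} = \ker(A^*)^\perp = \Gamma$; combined with the closedness of the range this gives $\mathrm{Range}(A) = \Gamma$. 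Thus $A$ is a bounded bijection. Representing $l$ via Riesz as $l(\psi) = \La g_l, \psi \Ra_\Gamma$ and setting $f = A^{-1} g_l$ produces a solution of $\CalB(f,\psi) = l(\psi)$, and the injectivity of $A$ yields uniqueness.

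For part (b), I would repeat the construction on the finite-dimensional space $\Gamma_N$: define $A_N : \Gamma_N \to \Gamma_N$ by $\La A_N f_N, \psi_N \Ra_\Gamma = \CalB(f_N, \psi_N)$, which is well defined by Riesz representation on the Hilbert space $\Gamma_N$. The discrete inf-sup condition forces $A_N$ to be injective, hence bijective since $\dim \Gamma_N < \infty$, which gives the unique discrete solution $f_N$. For quasi-optimality the key observation is Galerkin orthogonality, $\CalB(f - f_N, \psi_N) = 0$ for all $\psi_N \in \Gamma_N$, obtained by subtracting $\CalB(f_N, \psi_N) = l(\psi_N)$ from $\CalB(f, \psi_N) = l(\psi_N)$. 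Then for an arbitrary $w \in \Gamma_N$, applying the discrete inf-sup condition to $f_N - w \in \Gamma_N$ and using Galerkin orthogonality and boundedness gives $\kappa_N \|f_N - w\|_\Gamma \le \sup_{\psi_N \in \Gamma_N,\ \|\psi_N\|_\Gamma = 1} \CalB(f_N - w, \psi_N) = \sup_{\psi_N} \CalB(f - w, \psi_N) \le c_0 \|f - w\|_\Gamma$. The triangle inequality $\|f - f_N\|_\Gamma \le \|f - w\|_\Gamma + \|w - f_N\|_\Gamma$ followed by taking the infimum over $w \in \Gamma_N$ then yields the estimate with $\kappa_1 = 1 + c_0/\kappa_N$.

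The argument is essentially routine; the one point genuinely requiring care is in part (a), where on an infinite-dimensional space one needs \emph{both} inf-sup inequalities --- the second to get injectivity and closed range of $A$, the first to rule out a nontrivial cokernel via $\ker A^* = \{0\}$ --- whereas in the finite-dimensional setting of part (b) a single inf-sup inequality suffices because injectivity already forces bijectivity. A secondary bookkeeping issue, not needed for the abstract statement but essential for applications, is that $\kappa_N$ (and hence $\kappa_1$) be bounded below independently of $N$; this is precisely what the discrete inf-sup hypothesis asserts and is the property that will have to be established for the concrete approximation spaces constructed later in Section~4.
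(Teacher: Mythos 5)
Your argument is the standard (and correct) proof of the Babu\v{s}ka--Aziz theorem: the Riesz/closed-range argument for part (a), with the two inf-sup inequalities correctly assigned to injectivity-plus-closed-range of $A$ and triviality of $\ker A^*$, and Galerkin orthogonality plus the discrete inf-sup condition for the quasi-optimality bound $\kappa_1 = 1 + c_0/\kappa_N$ in part (b). The paper itself does not prove this statement --- it recalls it as a classical lemma and cites \cite{BabuskaAziz:72} --- so there is nothing to compare against; your proposal fills that gap soundly, and your closing remark that the relevant issue in applications is a discrete inf-sup constant uniform in the approximation space is exactly the point the paper addresses later when verifying the condition on $\Gamma_{NK}$.
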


It is clear that the inf-sup condition of $\CalB$ is essential to the solvability of~\eqref{variational}. We thus first show that $\CalB$ satisfies this condition. 
\begin{prop}[Inf-sup]\label{inf-sup}
Let $\Gamma$ and $\CalB$ be the function space and the bilinear operator defined in \eqref{def-Gamma} and \eqref{def-B} respectively. Then $\CalB: \Gamma \times \Gamma \to \R$ satisfies the inf-sup condition~\eqref{cond:inf-sup}.
\end{prop}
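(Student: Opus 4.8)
The plan is to verify both inequalities in \eqref{cond:inf-sup} by the usual device for even--odd variational formulations: for a fixed test direction we build an explicit competitor and balance two contributions with a small parameter. First I would record the basic computation that for any $\psi\in\Gamma$, inserting $f=\psi$ into \eqref{def-B} makes the two transport terms cancel, so that
\[
   \CalB(\psi,\psi) = \viint{\psi}{\CalL_d\psi}_{x,v} + \La |v_1+u|\psi^+,\psi^+\Ra_{x=0} \ge \sigma_1 \norm{\psi}^2_{(L^2(a\dv\dx))^m},
\]
where the lower bound is Lemma~\ref{lem:prop-L-d} integrated in $x$ (extended from $\CalD(\CalL)$ to $(L^2(a\dv))^m$ by density and continuity of $f\mapsto\viint{f}{\CalL_d f}$) together with nonnegativity of the boundary term. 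The key observation is that testing against $\psi$ alone controls only the $(L^2(a\dv\dx))^m$ part of the $\Gamma$-norm and not the derivative part $\norm{(v_1+u)\del_x\psi^+}_{(L^2(\tfrac{1}{a}\dv\dx))^m}$, since the latter never appears in $\CalB(\psi,\psi)$.

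To capture the missing piece I would introduce a corrector $f_2$ whose even part vanishes: take $f_2$ to be the odd part (with respect to $v_1\mapsto -2u-v_1$) of $-\tfrac{1}{a}(v_1+u)\del_x\psi^+$. Then $f_2\in\Gamma$ with $\norm{f_2}_\Gamma = \norm{f_2}_{(L^2(a\dv\dx))^m} \lesssim \norm{(v_1+u)\del_x\psi^+}_{(L^2(\tfrac{1}{a}\dv\dx))^m}$, and since $(v_1+u)\del_x\psi^+$ is itself odd while $f_2^+=0$, every term of $\CalB(f_2,\psi)$ in \eqref{def-B} either vanishes or reduces to $\viint{\psi}{\CalL_d f_2}_{x,v}$, except for $-\La f_2^-,(v_1+u)\del_x\psi^+\Ra_{x,v}=\norm{(v_1+u)\del_x\psi^+}^2_{(L^2(\tfrac{1}{a}\dv\dx))^m}$. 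Using that $\CalL_d:(L^2(a\dv))^m\to(L^2(\tfrac{1}{a}\dv))^m$ is bounded — Assumption (A2) for $\CalL$ plus the fact that each damping term in \eqref{eqn:def_damping} is a bounded rank-one map between these weighted spaces — I would estimate $\viint{\psi}{\CalL_d f_2}_{x,v}$ by $C\norm{\psi}_{(L^2(a\dv\dx))^m}\norm{(v_1+u)\del_x\psi^+}_{(L^2(\tfrac{1}{a}\dv\dx))^m}$ and absorb it with Young's inequality. Testing finally with $f=\psi+\delta f_2$ for $\delta>0$ small depending only on $\sigma_1$ and $C$ yields $\CalB(f,\psi)\gtrsim\norm{\psi}^2_\Gamma$ while $\norm{f}_\Gamma\le(1+\delta C)\norm{\psi}_\Gamma$; dividing gives the first line of \eqref{cond:inf-sup}. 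For the second line I would run the symmetric argument with the two slots of $\CalB$ interchanged: given $f\in\Gamma$, test first with $\psi=f$ (same cancellation and coercive bound) and then with the odd part of $+\tfrac{1}{a}(v_1+u)\del_x f^+$, the opposite sign being forced by the fact that $(v_1+u)\del_x f^+$ enters \eqref{def-B} with a plus sign in the second argument, and then combine exactly as before.

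I expect the only genuine difficulty to be the weight: $a=a(|v|)$ is not invariant under $v_1\mapsto -2u-v_1$ when $u\neq0$, so the naive corrector $-\tfrac{1}{a}(v_1+u)\del_x\psi^+$ is not purely odd, and one must symmetrize it and use that $a(v)$ and $a(-2u-v_1,v_2,v_3)$ differ by at most $2|u|$ additively in the base $1+|v_1|$, hence are comparable uniformly in $v$ since $0\le\omega_0\le1$. Granting this, the remainder is routine bookkeeping: identifying which terms of \eqref{def-B} survive for a corrector with zero even part, checking the mild density/regularity point that makes $\viint{\psi}{\CalL_d\psi}_{x,v}$ well defined for general $\psi\in\Gamma$ rather than only $\psi(x,\cdot)\in\CalD(\CalL)$, and absorbing the $\CalL_d$ cross-terms into the coercivity via the small parameter $\delta$.
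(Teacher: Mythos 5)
Your proposal is correct and follows essentially the same route as the paper: test with the function itself to exploit the coercivity of $\CalL_d$ (Lemma~\ref{lem:prop-L-d}), then add a purely odd corrector built from $\tfrac{1}{a}(v_1+u)\del_x(\cdot)^+$ with the appropriate sign to recover the derivative part of the $\Gamma$-norm, and absorb the $\CalL_d$ cross term by balancing the two test functions. Your symmetrization of the weight is equivalent to the paper's device of simply using the reflection-invariant weight $(1+|v_1+u|+|v_2|+|v_3|)^{-\omega_0}$, which is comparable to $1/a$ with constants depending on $u$, so the corrector is exactly odd from the start.
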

\begin{proof}
Note that $\CalB$ is symmetric in its variables. Hence it suffices to show 
that the second condition in \eqref{cond:inf-sup} holds. To this end, let 
$\Vf \in \Gamma$ be arbitrary. We only need to find an appropriate $\psi$ 
such that 
\begin{equation}
     \label{ineq: inf-sup}
      \CalB(\Vf, \psi) \geq \kappa_0 \|\Vf\|_\Gamma^2 \,,
\qquad
      \|\psi\|_\Gamma \leq \kappa_1 \|\Vf\|_\Gamma \,.      
\end{equation}
Indeed, if $\psi$ satisfies \eqref{ineq: inf-sup}, then one can simply
let $\Psi = \frac{\psi}{\|\psi\|_\Gamma}$ and obtain the second
inequality in \eqref{cond:inf-sup} (with a different constant). The
construction of such $\psi$ will be carried out in two steps. First,
let $\psi_1 = \Vf$. Then by Lemma~\ref{lem:prop-L-d},
\begin{equation*}
\begin{aligned}   
    \CalB(\Vf, \psi_1) 
 = \viint{f}{\CalL_d f}_{x,v}   
          + \La |v_1+u| \Vf^+, \Vf^+ \Ra_{x=0} 
 \geq 
    \sigma_1 \|\Vf\|_{(L^2(a\dv\dx))^m}^2 \,.    
\end{aligned}
\end{equation*}
 Next, let 
\begin{equation*}
      \psi_2 = \frac{1}{(1 + |v_1 + u| + |v_2| + |v_3|)^{\omega_0}} (v_1+u) \del_x \Vf^+ \,. 
\end{equation*}
We claim that $\psi_2 \in \Gamma$. Indeed, by the definition of $a(v)$, one can find two constants $c_1, c_2 > 0$ such that
\begin{equation*}
     \frac{c_1}{a(v)} \leq \frac{1}{(1 + |v_1 + u| + |v_2| + |v_3|)^{\omega_0}} \leq \frac{c_2}{a(v)} \,. 
\end{equation*}
Here the constants $c_1, c_2$ depend on $u$. Thus $\psi_2 \in (L^2(a \dv\dx))^m$ because
\begin{equation*}
    \|\psi_2\|_{(L^2(a \dv\dx))^m} \leq \|(v_1 + u) \del_x \Vf^+\|_{\left(L^2(\tfrac{1}{a} \dv\dx)\right)^m} \leq \|\Vf\|_\Gamma \,. 
\end{equation*}
Moreover the definition of $\psi_2$ implies that 
\begin{equation*}
  \psi_2^+ = 0 \in (L^2(\tfrac{1}{a} \dv\dx))^m \,.  
\end{equation*}
Hence $\psi_2 \in \Gamma$ and it satisfies
\begin{equation}
      \|\psi_2\|_\Gamma \leq \|\Vf\|_\Gamma \,. \label{bound:2}
\end{equation}
Using $\psi_2$ in $\CalB$, we have
\begin{equation*}
\begin{aligned}
   \CalB(\Vf, \psi_2) 
   = &\La (v_1 + u) \del_x \Vf^+, \psi_2\Ra
       + \La \psi_2, \CalL\Vf\Ra 
       + \alpha \sum_{k=1}^{\nu_+} \La \La (v_1 + u) X_{+,k}, \psi_2
       \Ra_v \La (v_1 + u)X_{+,k}, f \Ra_v \Ra_x
       \\
       & + \alpha \sum_{k=1}^{\nu_-} \La \La (v_1 + u) X_{-,k}, \psi_2
       \Ra_v \La (v_1 + u)X_{-,k}, f \Ra_v \Ra_x
       \\
       &+ \alpha \sum_{k=1}^{\nu_0} \La \La (v_1 + u) X_{0,k}, \psi_2
       \Ra_v \La (v_1 + u) X_0, f \Ra_v \Ra_x
       \\
       & + \alpha \sum_{k=1}^{\nu_0} \La \La (v_1 + u) \CalL^{-1}((v_1
       + u)X_{0,k}), \psi_2 \Ra_v \La (v_1 + u) \CalL^{-1}((v_1 +
       u)X_{0,k}), f\Ra_v \Ra_x
       \\
       \geq & \|(v_1 + u) \del_x
       \Vf^+\|_{(L^2(\tfrac{1}{a}\dv\dx))^m}^2 - \kappa_2
       \|\Vf\|_{(L^2(a\dv\dx))^m}^2 \,,
     \end{aligned}
   \end{equation*}
for some constant $\kappa_2 > 0$. Hence by taking $\kappa_3 > 0$ large enough, we have that 
\begin{equation}
       \label{est:2}
       \CalB(\Vf, \kappa_3 \psi_1 + \psi_2) \geq \kappa_0 \|\Vf\|_\Gamma^2 \,,
\end{equation}
for some $\kappa_0 > 0$. Recall that by the definition of $\psi_1$ and \eqref{bound:2}, we also have
\begin{equation*}
      \|\kappa_3 \psi_1 + \psi_2\|_\Gamma 
\leq \sqrt{1 + \kappa_3}  \, \|\Vf\|_\Gamma \,, 
\end{equation*}
which, together with \eqref{est:2}, shows the inf-sup property of $\CalB$ on $\Gamma \times \Gamma$.
\end{proof}

Using the inf-sup property of $\CalB$ and the Babu\v{s}ka-Aziz Lemma, we can now show the solvability of the variational form~\eqref{variational}.

\begin{prop}[Well-posedness of the damped equation]\label{prop:wellposed}
Suppose $\CalL$ satisfies Assumption (A1)-(A4) and $\CalL_d$ is defined as in~\eqref{eqn:def_damping} with $\alpha$ small enough such that the coercivity in Lemma~\ref{lem:prop-L-d} holds. Let $\Vphi \in (L^2(a(v) \Id_{v_1+u>0}\dv))^m$ and $\Gamma$ be the function space defined in~\eqref{def-Gamma}. Then


\Ni (a) There exists a unique $\Vf \in \Gamma$ such that \eqref{variational} holds.


\Ni (b) Moreover, $f$ satisfies that
\begin{equation*}
     (v_1+u) \del_x f \in (L^2(\tfrac{1}{a} \dv\dx))^m
\end{equation*}
and it solves the damped half-space equation in the sense of distributions
\begin{equation}
  \label{eq:half-space-damping-1}
\begin{aligned} 
  (v_1 + u) \del_x f + \VecL_d f 
  = & (v_1 + u) \del_x f + \VecL f  
     +  \alpha  \sum_{k=1}^{\nu_+} 
          (v_1 + u) X_{+,k} \La (v_1 + u)X_{+,k}, f \Ra_v
\\
    & + \alpha \sum_{k=1}^{\nu_-} 
           (v_1 + u) X_{-,k} \La (v_1 + u)X_{-,k},  f \Ra_v
    + \alpha \sum_{k=1}^{\nu_0} 
           (v_1 + u) X_{0,k} \La (v_1 + u) X_{0,k}, f \Ra_v                            
\\
     & + \alpha \sum_{k=1}^{\nu_0}
              (v_1 + u) \CalL^{-1}((v_1 + u)X_{0,k})
             \La (v_1 + u) \CalL^{-1}((v_1 + u)X_{0,k}),  f\Ra_v
          = 0  
\end{aligned}
\end{equation}
with the boundary conditions (defined in the trace sense at $x=0$)
\begin{equation}
     \label{cond:bdry-damped}
\begin{aligned}
     \Vf |_{x=0} = \Vphi(v) \,, \qquad v_1+u > 0 \,. 
\end{aligned} 
\end{equation}

\smallskip

\Ni (c) If $a(v) = 1 + |v|$, then there exists $\beta > 0$ such that $(L^2(e^{2\beta x} \dx; L^2(a\dv)))^m$. 
\end{prop}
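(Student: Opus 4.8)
The plan is to prove the three parts in order: (a) by the Babu\v{s}ka--Aziz lemma (Theorem~\ref{thm-BA}(a)), (b) by testing the weak formulation against suitable even and odd test functions, and (c) by an exponentially weighted energy estimate that exploits the coercivity of $\CalL_d$ from Lemma~\ref{lem:prop-L-d}.

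For (a), since Proposition~\ref{inf-sup} already supplies the inf-sup condition, Theorem~\ref{thm-BA}(a) reduces the claim to verifying that $\CalB$ is bounded on $\Gamma\times\Gamma$ and that $l$ is a bounded linear functional on $\Gamma$. I would estimate $\CalB$ term by term from the expanded expression in \eqref{def-B}: the two transport terms by Cauchy--Schwarz after the weight split $1=a^{1/2}a^{-1/2}$ together with the definition of $\norm{\cdot}_\Gamma$; the collision term $\La\psi,\CalL f\Ra_{x,v}$ by Assumption~(A2); each of the four damping sums by Cauchy--Schwarz in $v$ (again splitting the weight), using that the fixed profiles $(v_1+u)X_{\alpha,k}$ and $(v_1+u)\CalL^{-1}((v_1+u)X_{0,k})$ belong to $(L^2(\tfrac1a\dv))^m$ by Assumptions~(A3)--(A4); and the boundary term by the trace estimate $\norm{g^+|_{x=0}}_{(L^2(\abs{v_1+u}\dv))^m}\lesssim\norm{g}_\Gamma$ recorded after \eqref{def-Gamma}. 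The functional $l$ from \eqref{def-l} is bounded by the same trace estimate together with $\phi\in(L^2(a(v)\Id_{v_1+u>0}\dv))^m$. Theorem~\ref{thm-BA}(a) then yields the unique $f\in\Gamma$ solving \eqref{variational}.

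For (b), I would first extract the interior equation. Testing \eqref{variational} with $\psi=\psi^+\in\Gamma$ supported compactly in $x$ removes every boundary contribution and, since $(\psi^+)^-=0$, leaves $-\La f^-,(v_1+u)\del_x\psi^+\Ra_{x,v}+\La\psi^+,(\CalL_d f)^+\Ra_{x,v}=0$, i.e.\ $(v_1+u)\del_x f^-=-(\CalL_d f)^+$ in the sense of distributions; testing with compactly supported $\psi=\psi^-$ gives $(v_1+u)\del_x f^+=-(\CalL_d f)^-$. By (A2) and the structure of the damping terms both right-hand sides lie in $(L^2(\tfrac1a\dv\dx))^m$, so $(v_1+u)\del_x f\in(L^2(\tfrac1a\dv\dx))^m$ and \eqref{eq:half-space-damping-1} holds distributionally. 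Since $f$ now has a well-defined trace, I would recover \eqref{cond:bdry-damped} by testing \eqref{variational} with a general $\psi=\psi^+\in\Gamma$: integrating the transport term by parts in $x$ (via the trace theorem after \eqref{def-Gamma} and a density argument) produces the extra term $\int_\VV(v_1+u)f^-\psi^+\big|_{x=0}\dv$, the interior contributions $\La(v_1+u)\del_x f^-,\psi^+\Ra_{x,v}+\La\psi^+,(\CalL_d f)^+\Ra_{x,v}$ cancel by the even part of the equation just obtained, and one is left with $\int_\VV\bigl((v_1+u)f^-+\abs{v_1+u}f^+\bigr)\psi^+\big|_{x=0}\dv=2\int_{v_1+u>0}(v_1+u)\,\phi\,\psi^+|_{x=0}\dv$. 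Folding the $v$-integral on the left onto $\{v_1+u>0\}$ via the parities of $f^\pm$ and $\psi^+$ under $v_1\mapsto-2u-v_1$ turns it into $2\int_{v_1+u>0}(v_1+u)\,f\,\psi^+\big|_{x=0}\dv$, and since the traces of even functions exhaust $L^2(\abs{v_1+u}\dv)$ over $\{v_1+u>0\}$ we conclude $f|_{x=0}=\phi$ there.

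For (c), assume $a(v)=1+\abs v$ (the claim being $f\in(L^2(e^{2\beta x}\dx;L^2(a\dv)))^m$). Set $E(x)=\tfrac12\La(v_1+u)f(x),f(x)\Ra_v$; pairing \eqref{eq:half-space-damping-1} with $f$ in $v$ and using Lemma~\ref{lem:prop-L-d} gives $E'(x)=-\La f(x),\CalL_d f(x)\Ra_v\le-\sigma_1\norm{f(x)}_{(L^2(a\dv))^m}^2\le0$, so $E$ is nonincreasing, while $\abs{v_1+u}\lesssim a(v)$ in this case gives $\abs{E(x)}\le\tfrac C2\norm{f(x)}_{(L^2(a\dv))^m}^2$. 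I would then show $E\ge0$ everywhere: if $E(x_1)<0$, monotonicity gives $E\le E(x_1)<0$ on $[x_1,\infty)$, and combining $-E'\ge\sigma_1\norm{f}^2$ with $\norm{f}^2\ge\tfrac2C\abs E$ yields $E'\le\tfrac{2\sigma_1}{C}E$, hence $E(x)\le E(x_1)e^{(2\sigma_1/C)(x-x_1)}\to-\infty$, forcing $\norm{f(x)}_{(L^2(a\dv))^m}^2\to\infty$ and contradicting $f\in(L^2(a\dv\dx))^m$ from part~(a). With $E\ge0$, for $0<\beta<\sigma_1/C$ one has $\tfrac{d}{dx}\bigl(e^{2\beta x}E(x)\bigr)\le(\beta C-\sigma_1)e^{2\beta x}\norm{f(x)}_{(L^2(a\dv))^m}^2$; integrating on $[0,T]$, dropping $e^{2\beta T}E(T)\ge0$ and letting $T\to\infty$ gives $\int_0^\infty e^{2\beta x}\norm{f(x)}_{(L^2(a\dv))^m}^2\dx\le E(0)/(\sigma_1-\beta C)<\infty$. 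I expect the recovery of the boundary condition in part~(b) — setting up the correct Green identity on $\Gamma$ with its mixed regularity and then exploiting the parities of $f^\pm,\psi^+$ to fold the velocity integral onto the incoming set — to be the main technical obstacle; part~(a) is routine bookkeeping and the only subtle point in (c) is the sign $E\ge0$, which is short once part~(a) is available.
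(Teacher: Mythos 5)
Your parts (a) and (b) follow essentially the same route as the paper: boundedness of $\CalB$ and $l$ combined with Proposition~\ref{inf-sup} and Theorem~\ref{thm-BA}(a) for existence and uniqueness, then compactly supported even/odd test functions to extract the distributional equation and the regularity $(v_1+u)\del_x f\in(L^2(\tfrac1a\dv\dx))^m$, and finally integration by parts against general even test functions with the parity folding onto $\{v_1+u>0\}$ to recover $f|_{x=0}=\phi$; this is exactly the paper's argument for (a)--(b), and your derivation is in fact cleaner, since the paper's displayed identity for $(v_1+u)\del_x f$ carries a spurious $\beta(v_1+u)f$ term left over from part (c). Part (c) is where you genuinely diverge: the paper absorbs the exponential weight into the variational problem by setting $g=e^{\beta x}f$ and perturbing the bilinear form to $\CalB_\beta(g,\psi)=\CalB(g,\psi)-\beta\La(v_1+u)g,\psi\Ra_{x,v}$, checking that for $\beta$ small the perturbation does not destroy the inf-sup estimate, whereas you run a direct energy/Gronwall argument on $E(x)=\tfrac12\La(v_1+u)f,f\Ra_v$: monotonicity from coercivity, $E\ge0$ by a blow-up contradiction, and then the weighted differential inequality. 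Both are sound. Your route yields the weighted bound with an explicit constant $E(0)/(\sigma_1-\beta C)$ and avoids re-running the inf-sup machinery, but it silently uses two facts the variational route bypasses: that $E$ is locally absolutely continuous with $E'=\La(v_1+u)\del_x f,f\Ra_v$ (a standard Lions-type product rule, available precisely because part (b) gives $f\in(L^2(a\dv\dx))^m$ and $(v_1+u)\del_x f\in(L^2(\tfrac1a\dv\dx))^m$), and that $E(0)$ is finite, which follows from the trace of $f$ at $x=0$ existing with this regularity; you should state both. Conversely, the paper's route has its own implicit step (identifying the solution of the $\CalB_\beta$-problem with $e^{\beta x}f$ via uniqueness), so neither argument is strictly simpler; yours also delivers the conclusion in the weighted space $L^2(a\dv)$ rather than just $L^2(\dv)$.
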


\begin{proof}
(a) It is straightforward to verify the boundedness of $\CalB$ and $l$ as defined in \eqref{def-B} and \eqref{def-l}. The well-posednes of the variational form is then an immediate consequence of Proposition \ref{inf-sup} and part (a) of the Babu\v{s}ka-Aziz lemma. 

\Ni (b) In order to show that $(v_1+u) \del_x f \in (L^2(\tfrac{1}{a}\dv\dx))^m$, we note that the damped equation \eqref{eq:half-space-damping-1} holds in the sense of distributions by choosing the test function $\psi \in C_c^\infty((0, \infty) \times \R)$. Thus 
\begin{equation*}
     (v_1+u) \del_x f = \beta (v_1 + u) f - \CalL_d(f) \in (L^2(\tfrac{1}{a} \dv\dx))^m \,. 
\end{equation*}
By the density argument this implies that 
\begin{equation*}
\begin{aligned}
      \La (v_1+u) \del_x f^-, \,\, \psi^+\Ra_{x,v} 
      + \La (v_1+u) \del_x f^+, \,\, \psi^-\Ra_{x,v} 
      - \beta \La (v_1+u) \psi, f \Ra_{x,v}
      + \La \psi, \CalL_d f\Ra_{x,v} = 0 \,,
\end{aligned}
\end{equation*}
for all $\psi \in C^\infty(0, \infty)$.
Therefore, if we choose $\phi \in C^\infty[0, \infty)$ and integrate by parts in the variational form \eqref{variational}, then boundary terms satisfy
\begin{equation*}
   \La (v_1+u)f^-, \,\, \psi^+\Ra_v + \La |v_1+u| f^+, \,\, \psi^+ \Ra_v
   =  2 \int_{v_1+u>0} (v_1+u) \, \phi \, \psi^+ \dv
\qquad \text{at $x=0$} \,,    
\end{equation*}
which implies,
\begin{equation*}
   \int_{v_1+u>0} (v_1+u) f \psi^+ \dv 
   =  \int_{v_1+u>0} (v_1+u) \, \phi \, \psi^+ \dv
\qquad \text{at $x=0$}\,. 
\end{equation*}
Since $\psi^+ \in C^\infty(0, \infty)$ is arbitrary, we have $f = \phi$ at $x=0$ when $v_1 + u > 0$. 

\Ni (c) If $a(v)= 1+ |v|$, then there exists $\beta > 0$ such that $f \in (L^2(e^{2\beta x} \dx; L^2(a\dv)))^m$. The proof will be along the same line for the general case of $a(v)$. We use the standard way to incorporate the exponential into the bilinear form by changing $f$ by $g = e^{\beta x} f$. The new bilinear form $\CalB_\beta$ is
\begin{equation}\nn
      \CalB_\beta(g, \psi) = \CalB (g, \psi) - \beta \La (v_1+u) g, \psi\Ra_{x,v} \,,
\end{equation}
 where $\CalB(g, \psi)$ is defined in~\eqref{def-B}. Note that by Cauchy-Schwartz, if we choose $0 < \beta \ll \alpha$, then by the spectral gap assumption (A4), we have
\begin{equation} \nn
\begin{aligned}
     \big | \beta \La (v_1 + u) g, \,\, \psi_1\Ra_{x,v}\big |
&\leq \frac 12 \CalB(g, \psi) \,,
\\
     \big | \beta \La (v_1 + u) g \,\, \psi_2\Ra_{x,v}\big |
&\leq \frac 12 \CalB(g, \psi) + \frac 12 \La (v_1+u) \del_x g^+, \,\, \psi_2\Ra_{x,v}\,.
\end{aligned}
\end{equation}
Hence, this extra $\beta$-term will not affect the inf-sup estimate. 
Since $g \in (L^2(\dv\dx))^m$, we have that $f \in (L^2(e^{2\beta x}\dx; L^2(\dv)))^m$.
\end{proof}

\begin{rmk}
Note that $f - f_\infty$ for the neutron transport equations satisfy the exponential decay as $x \to \infty$ since $a(v) \sim 1$ in this case. 
\end{rmk}

\subsection{Recovery of the undamped solution} \label{sec:recovery}
Using the solution of the damped equation~\eqref{eq:half-space-damping-1}, we now explicitly construct solutions to the original undamped equation~\eqref{eq:half-space-general}. First we introduce the following notations: for any solution $f$ to the damped equation~\eqref{eq:half-space-damping-1}, denote
\begin{equation}
   \label{def:U-pm-0}
\begin{aligned}
  \VU_+ (f) & = \left(\La (v_1 + u)X_{+,1}, f \Ra_v, \,\, \cdots \,, \,\,
    \La (v_1 + u)X_{+,\nu_+}, f \Ra_v \right)^{\TT} \,, \\
  \VU_- (f) & = \left(\La (v_1 + u)X_{-,1}, f \Ra_v, \,\, \cdots \,, \,\,
    \La (v_1 + u)X_{-,\nu_-}, f \Ra_v \right)^{\TT} \,, \\
  \VU_0 (f) & = \left(\La (v_1 + u)X_{0,1}, f \Ra_v, \,\, \cdots \,, \,\, \La
    (v_1
    + u)X_{0,\nu_0}, f \Ra_v\right)^{\TT} \,, \\
  \VU_{\VecL, 0} (f)& = \left(\La (v_1 + u) \VecL^{-1}( (v_1 + u )X_{0,1}), f
    \Ra_v, \,\, \cdots \,, \,\, \La (v_1 + u) \VecL^{-1}( (v_1 + u
    )X_{0,1})X_{0,\nu_0}, f \Ra_v\right)^{\TT} \,, 
\end{aligned}
\end{equation}
and
\begin{equation}
  \VU (f)
  = \left(\VU_+^{\TT} (f), \,\, \VU_-^{\TT} (f), \,\, \VU_0^{\TT} (f), \,\, \VU_{\VecL, 0}^{\TT} (f)
  \right)^{\TT} \,. \label{def-U}
\end{equation}

Next we define some auxiliary functions. For each $1 \leq i \leq \nu_+$, let $g_{+, i}$  be the solution to~\eqref{eq:half-space-damping} with boundary conditions given by $X_{+, i}$: 
\begin{equation*}
  g_{+, i} \vert_{x=0} = X_{+, i}, \quad v_1 + u > 0. 
\end{equation*}
Similarly, for each $1 \leq j \leq \nu_0$, denote $g_{0, j}$ as the solution to
\eqref{eq:half-space-damping} where 
\begin{equation*}
  g_{0, j} \vert_{x=0} = X_{0, j}, \quad v_1 + u > 0 \,.
\end{equation*}
Let $C$ be the block matrix defined by
\begin{equation}
  C = 
  \begin{pmatrix} 
    C_{++} & C_{+0} \\
    C_{0+} & C_{00} 
  \end{pmatrix} \,, \label{def:C}
\end{equation}
where 
\begin{align*}
  & C_{++, ii'} = \langle(v_1+u)X_{+, i},g_{+, i'} \rangle \big|_{x=0},  
  && C_{+0, ij'} = \langle(v_1+u)X_{+, i},g_{0, j'} \rangle \big|_{x=0}, \\
  & C_{0+, ji'} = \langle(v_1+u)X_{0, j},g_{+, i'} \rangle \big|_{x=0}, 
  && C_{00, jj'} = \langle(v_1+u)X_{0, j},g_{0, j'} \rangle \big|_{x=0}
\end{align*}
for $1\leq i, i' \leq \nu_+$ and $1 \leq j, j' \leq \nu_0$.  In the case where $\dim H^0 = 0$, we have
\begin{align} \label{def:C-no-H-0}
   C = C_{++} \,.
\end{align}
The main property we will show about $C$ is that $C$ is non-singular. This will be an easy consequence of the following lemma:
\begin{lem} \label{lem:for-C}
Let $f$ be a solution to the damped equation~\eqref{eq:half-space-damping-1} and $\vec U(f)$ be defined as in~\eqref{def-U}. Suppose 
\begin{align} \label{cond:U-plus-0}
   \vec U_+(f) = \vec U_0(f) = 0 \,,
\qquad
  \text{at $x=0$} \,.
\end{align} 
Then $\vec U(f) = 0$ for all $x$.
\end{lem}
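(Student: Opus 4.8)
The plan is to reduce the claim to a finite‑dimensional linear ODE for $\VU(f)$ and to build a Lyapunov functional for it which, together with the decay of $f$ and the hypothesis \eqref{cond:U-plus-0}, forces $\VU(f)\equiv 0$.

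First I would test the damped equation \eqref{eq:half-space-damping-1} in $v$ against the null functions $X_{+,i},X_{-,j},X_{0,k}$ and against $\CalL^{-1}\!\big((v_1+u)X_{0,k}\big)$. Since $\La\CalL\Vf,X_{\alpha,\gamma}\Ra_v=0$ for $X_{\alpha,\gamma}\in\NullL$ and $\La\CalL\Vf,\CalL^{-1}((v_1+u)X_{0,k})\Ra_v=\La\Vf,(v_1+u)X_{0,k}\Ra_v$ (using $(v_1+u)X_{0,k}\in(\NullL)^{\perp}$), every term produced is again a linear combination of the components of $\VU$, so the resulting ODE system is \emph{closed}. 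Using \eqref{cond:X-pm-0} and the constants $\gamma_{\pm}$ of \eqref{notation:1} one finds
\begin{align*}
  \tfrac{d}{dx}\VU_{+}&=-\alpha\Gamma_{+}\VU_{+}-\alpha B^{\TT}\VU_{\CalL,0}\,,
  &\tfrac{d}{dx}\VU_{0}&=-\alpha D\,\VU_{\CalL,0}\,,\\
  \tfrac{d}{dx}\VU_{-}&=\alpha\Gamma_{-}\VU_{-}-\alpha C^{\TT}\VU_{\CalL,0}\,,
  &\tfrac{d}{dx}\VU_{\CalL,0}&=-\VU_{0}-\alpha B\VU_{+}-\alpha C\VU_{-}-\alpha D\VU_{0}-\alpha E\VU_{\CalL,0}\,,
\end{align*}
where $\Gamma_{\pm}$ are positive-definite diagonal matrices assembled from the $\gamma_{\pm}$ of \eqref{notation:1}, $B_{ki}=\La(v_1+u)\CalL^{-1}((v_1+u)X_{0,k}),X_{+,i}\Ra_v$, $C_{kj}=\La(v_1+u)\CalL^{-1}((v_1+u)X_{0,k}),X_{-,j}\Ra_v$, $D_{km}=\La(v_1+u)\CalL^{-1}((v_1+u)X_{0,k}),X_{0,m}\Ra_v$ and $E_{km}=\La(v_1+u)\CalL^{-1}((v_1+u)X_{0,k}),\CalL^{-1}((v_1+u)X_{0,m})\Ra_v$; here $D$ and $E$ are symmetric and $D$ is positive definite (as already observed in the proof of Lemma~\ref{lem:prop-L-d}).

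Next I would use that $\Vf\in\Gamma\subseteq(L^2(a\dv\dx))^{m}$ by Proposition~\ref{prop:wellposed}, so that each component of $\VU(x)$ is bounded by $C\|\Vf(x,\cdot)\|_{(L^2(a\dv))^m}$ and hence $|\VU|^{2}\in L^{1}(0,\infty)$; since $\VU$ solves a constant-coefficient linear ODE, $\tfrac{d}{dx}|\VU|^{2}\in L^{1}$ as well, so $|\VU(x)|^{2}\to 0$ as $x\to\infty$. Then, for $\lambda>0$, I would consider $\Psi_\lambda(\VU)=\tfrac{\lambda}{2}|\VU_{+}|^{2}-\tfrac{\lambda}{2}|\VU_{-}|^{2}+\VU_{0}\cdot\VU_{\CalL,0}$. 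Differentiating along the ODE, the diagonal part of $\tfrac{d}{dx}\Psi_\lambda$ equals $-\lambda\alpha\VU_{+}^{\TT}\Gamma_{+}\VU_{+}-\lambda\alpha\VU_{-}^{\TT}\Gamma_{-}\VU_{-}-|\VU_{0}|^{2}-\alpha\VU_{\CalL,0}^{\TT}D\VU_{\CalL,0}$, which is strictly dissipative. I would first pick $\lambda$ small (depending only on $\CalL$, essentially of size $d_{\min}\big(\norm{B}^{2}/\gamma_{+,\min}+\norm{C}^{2}/\gamma_{-,\min}\big)^{-1}$) so that Young's inequality absorbs the $B$- and $C$-couplings between $\VU_{\pm}$ and $\VU_{\CalL,0}$ into that dissipation, and only then shrink $\alpha$ so that the $O(\alpha^{2})$ remainders and the $\VU_{0}$-couplings (controlled by the $O(1)$ term $-|\VU_{0}|^{2}$) are absorbed; this yields $c>0$ with $\tfrac{d}{dx}\Psi_\lambda(\VU(x))\le -c|\VU(x)|^{2}$ on $(0,\infty)$.

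Finally, integrating this inequality and using $\Psi_\lambda(\VU(x))\to 0$ gives $\Psi_\lambda(\VU(0))\ge c\int_{0}^{\infty}|\VU|^{2}\dx\ge 0$; but the hypothesis \eqref{cond:U-plus-0} forces $\Psi_\lambda(\VU(0))=-\tfrac{\lambda}{2}|\VU_{-}(0)|^{2}\le 0$. Hence $\Psi_\lambda(\VU(0))=0$, so $\int_{0}^{\infty}|\VU|^{2}\dx=0$ and $\VU(f)\equiv 0$ (and incidentally $\VU_{-}(f)|_{x=0}=0$ as well). The hard part is this Lyapunov step: a naive weight such as $\lambda=1$ need not work, because the coupling matrices $B,C$ between $\VU_{\pm}$ and the auxiliary variables $\VU_{\CalL,0}$ are not a priori small, so $\lambda$ must be tuned to their size relative to the available dissipation before $\alpha$ is made small. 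An alternative would be to analyze the spectrum of the ODE matrix directly and show that its stable subspace has dimension $\nu_{+}+\nu_{0}$ and is transverse to $\{\VU_{+}=\VU_{0}=0\}$, but the Jordan structure at $\alpha=0$ makes that route more awkward.
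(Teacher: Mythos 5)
Your proof is correct, but it follows a genuinely different route from the paper's. Both arguments start the same way, by testing the damped equation against $X_{+,i}$, $X_{-,j}$, $X_{0,k}$ and $\CalL^{-1}((v_1+u)X_{0,k})$ to get a closed constant-coefficient ODE for $\VU$ (your system agrees with the paper's $\del_x\VU+A_2\VU=0$, with your $B,C,D,E$ being the paper's $A_{21}^{\TT}$, $A_{22}^{\TT}$, $B$, $D$ up to transposes --- beware the notational clash with the matrix $C$ of \eqref{def:C}). From there the paper argues spectrally: by a sequence of congruence transformations and Sylvester's law of inertia it shows $A_2$ has exactly $\nu_-+\nu_0$ negative eigenvalues, so square-integrability of $f$ imposes $\nu_-+\nu_0$ linear constraints ${\bd E}_k\cdot\VU=0$, and a separate contradiction argument shows the resulting $(\nu_-+\nu_0)\times(\nu_-+\nu_0)$ matrix acting on $(\VU_-,\VU_{\CalL,0})$ is nonsingular, whence $\VU(0)=0$; this is exactly the ``stable subspace is transverse to $\{\VU_+=\VU_0=0\}$'' route you set aside. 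Your alternative replaces all of that with the indefinite Lyapunov functional $\Psi_\lambda=\tfrac{\lambda}{2}|\VU_+|^2-\tfrac{\lambda}{2}|\VU_-|^2+\VU_0\cdot\VU_{\CalL,0}$, whose derivative along the flow you correctly make strictly dissipative by choosing $\lambda$ first (to absorb the $\VU_\pm$--$\VU_{\CalL,0}$ couplings into the $\alpha D$- and $\alpha\Gamma_\pm$-dissipation, an $\alpha$-independent constraint) and then $\alpha$ (to absorb the $O(\alpha)$ couplings against the $O(1)$ term $-|\VU_0|^2$ and the $O(\alpha^2)$ remainders); combined with $\VU\in L^2$, $|\VU|^2\to0$, and the sign of $\Psi_\lambda(\VU(0))$ under \eqref{cond:U-plus-0}, this forces $\VU\equiv0$ and even gives $\VU_-(0)=0$ for free. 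The trade-off: your argument is more elementary (no eigenvector structure, no inertia computation) and sidesteps the nonsingularity lemma for $\bd E$, but it only proves the statement for $\alpha$ below an additional threshold determined by $\Gamma_\pm,B,C,D,E$ (i.e.\ by $\CalL$ and $u$), whereas the paper's signature argument is valid for every $\alpha>0$ and, as a by-product, exhibits the count $\nu_-+\nu_0$ of growing modes that mirrors the boundary-condition counting used elsewhere in the paper. Since the paper anyway fixes $0<\alpha\ll1$ with size depending only on the operator, your extra smallness requirement is harmless for the way the lemma is used, but you should state it explicitly in the hypotheses.
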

\begin{proof}
We separate the proof in two parts according to $\dim H^0$.
\smallskip

\noindent {\underline{\em Case 1: $\dim(H^0) = 0$}.} In this case condition~\eqref{cond:U-plus-0} reduces to 
\begin{align} \label{cond:U-plus}
    \vec U_+(f) = 0 \,,
\qquad
  \text{at $x=0$} \,.
\end{align}
Moreover, the damped equation~\eqref{eq:half-space-damping-1} reduces to
\begin{equation}
     \label{eq:half-space-damping-2}
\begin{aligned}
     (v_1 + u) \del_x f &+ \VecL  f  
     + \alpha  \sum_{k=1}^{\nu_+} 
            (v_1 + u) X_{+,k} \La (v_1 + u)X_{+,k}, f \Ra_v
\\
    &+ \alpha \sum_{k=1}^{\nu_-} 
          (v_1 + u) X_{-,k} \La (v_1 + u)X_{-,k},  f \Ra_v
          = 0 \,, 
\end{aligned}
\end{equation}
and $\VU(f)$ becomes 
\begin{equation*}
     \VU(f)
     = \left(\VU_+^{\TT}(f), \,\, \VU_-^{\TT}(f) \right)^{\TT} \,. 
\end{equation*}
Multiplying \eqref{eq:half-space-damping-2} by $X_{+,k}, X_{-,j}$ 
and integrating over $v \in \VV$, we obtain a linear system for $\VU$: 
\begin{equation}
    \del_x \VU + A_1 \VU = 0 \,, \label{ode-U-1}
\end{equation}
where the coefficient matrix is diagonal: 
\begin{equation} \label{def:A-1}
    A_1 
    =\left(
       \begin{array}{c|c}
        \begin{matrix}
               \alpha D_+\\
       \end{matrix} & \mbox{0}\\ \hline
        0 & \begin{matrix}
            - \alpha D_- \\
        \end{matrix}
\end{array}\right) \,,
\end{equation}
where $D_+, D_-$ are positive definite and  
\begin{equation*}
      D_+ = \diag(\gamma_{+,1}, \cdots, \gamma_{+,\nu_+}) \,,
\qquad
      D_- = \diag(\gamma_{-, 1}, \cdots, \gamma_{-, \nu_-}) \,,       
\end{equation*}
where $\gamma_{\pm, k} > 0$ are defined as in \eqref{notation:1}.
Since solutions to \eqref{eq:half-space-damping-2} are in $(L^2(\dv\dx))^m$, 
it is clear that 
\begin{equation*}
     \La (v_1 + u)X_{-,j}, f(0, \cdot) \Ra_v
     = \La (v_1 + u)X_{-,j}, f(x, \cdot) \Ra_v = 0 \,,
\qquad
    \text{for all $1 \leq j \leq \nu_-$ and $x \geq 0$} \,. 
\end{equation*}
Hence $\VU_-(f) = 0$ holds for all $x$. Moreover, by the structure of $A_1$ in~\eqref{def:A-1} and the initial condition~\eqref{cond:U-plus}, we have $\VU_+(f) = 0$ for all $x$. Thus $\VU(f) = 0$ for all $x$.

\noindent {\underline{\em Case 2: $\dim(H^0) \neq 0$}.}
In this case, we multiply $X_{+,j}, X_{-,i}, X_{0, k}, \CalL^{-1}(v_1 X_{0,m})$ to \eqref{eq:half-space-damping-1} and integrate over $v \in \VV$. This gives
\begin{equation}
     \del_x \VU + A_2 \VU  = 0 \,, \label{ode-U-2}
\end{equation}     
where the coefficient matrix $A_2$ is 
\begin{equation}
    A_2
    =\left(
       \begin{array}{c|c|c}
        \begin{matrix}
              \alpha D_{+} \\       
          && -\alpha D_{-} \\       \end{matrix}        
       & \mbox{0}
        & \begin{matrix}\alpha A_{21} \\  \alpha A_{22}\end{matrix}
\\ \hline
        0 & 0 
     & \alpha B
\\ \hline
       \begin{matrix}\alpha  A_{21}^{\TT} && \alpha A_{22}^{\TT} \end{matrix}& I + \alpha B & \alpha D
\end{array}\right) \,,
\end{equation}
where again $D_\pm$ are positive diagonal matrices such that
\begin{equation*}
\begin{aligned}
    D_+ = \diag(\gamma_{+,1}, \cdots, 
                          \gamma_{+, \nu_+})_{\nu_+ \times \nu_+} \,,
\qquad
    D_- = \diag(\gamma_{-,1}, \cdots, 
                      \gamma_{-, \nu_-})_{\nu_- \times \nu_-} \,.
\end{aligned}
\end{equation*}
The other matrices are
\begin{equation*}
\begin{aligned}
     A_{21, ik} 
     &= \left(\La (v_1+u) X_{+,i}, \,\, \VecL^{-1}((v_1+u) X_{0,k})\Ra_v\right)_{\nu_+ \times \nu_0} \,,
\\
     A_{22, jk} 
     &= \left(\La (v_1+u) X_{-,j}, \,\, \VecL^{-1}((v_1+u) X_{0,k})\Ra_v \right)_{\nu_- \times \nu_0} \,,
\\
     B_{ij}
     & = \La (v_1 + u) X_{0,i}, \,\, \VecL^{-1}((v_1+u) X_{0,j})\Ra_{v,\nu_0 \times \nu_0}  \,,
\\
    D_{ij}
    & =  \La (v_1 + u) \VecL^{-1}((v_1+u) X_{0,i}), \,\,
                  \VecL^{-1}((v_1+u) X_{0,j})\Ra_{v,\nu_0 \times \nu_0}  \,, 
\end{aligned}
\end{equation*}
where $B$ is symmetric positive definite and $D$ is symmetric. 
Note that if we define 
\begin{equation*}
    Q 
    =\left(
       \begin{array}{c|c|c}
        \begin{matrix}
               I \\       
          && I \\
       \end{matrix} 
       & 0
       & 0
       \\ \hline
        0 & (\alpha B)^{1/2} (I + \alpha B)^{-1/2}
     & 0
\\ \hline
       0 & 0
       & I
\end{array}\right) \,, 
\end{equation*}
and
\begin{equation*}
    \tilde A_2 
    =\left(
       \begin{array}{c|c|c}
        \begin{matrix}
               \alpha D_{+} \\       
          && - \alpha D_{-} \\
       \end{matrix} 
       & 0
       & \begin{matrix}
            \alpha A_{21} \\  \alpha A_{22}\
          \end{matrix}
       \\ \hline
        0 & 0 
     & (I + \alpha B)^{1/2} (\alpha B)^{1/2}
\\ \hline
       \begin{matrix}
            \alpha A_{21}^{\TT} &&  \alpha A_{22}^{\TT}
          \end{matrix} & (I + \alpha B)^{1/2} (\alpha B)^{1/2} 
       & \alpha D
\end{array}\right) \,. 
\end{equation*}
Then 
\begin{equation}
     A_2 = Q^{-1} \tilde A_2 Q \,.  \label{A-2-Q}
\end{equation}
Thus $A_2$ and $\tilde A_2$ have the same signature. In particular, they have the same number of negative eigenvalues. Now we count the number of negative eigenvalues of $\tilde A_2$. Let
\begin{equation*}
    P
    =\left(
       \begin{array}{c|c|c}
           \begin{matrix} I \\ &&&&& I \end{matrix}
       & \mbox{0}
       & 0
       \\ \hline
        0 & I 
     & 0
\\ \hline
       \begin{matrix} -A_{21}^{\TT} D_+^{-1} & A_{22}^{\TT} D_-^{-1}\end{matrix}& 0 & I
\end{array}\right) \,. 
\end{equation*}
Then $P$ is non-singular and
\begin{equation*}
   A_3 = P \tilde A_2 P^{\TT} 
          = \left(
       \begin{array}{c|c|c}
        \begin{matrix}
               \alpha D_{+} \\       
          && - \alpha D_{-} \\
       \end{matrix} 
       & 0
       & 0
       \\ \hline
        0 & 0 
     &  (I + \alpha B)^{1/2} (\alpha B)^{1/2}
\\ \hline
      0 &  (I + \alpha B)^{1/2} (\alpha B)^{1/2} 
       & \alpha D_1
\end{array}\right),  
\end{equation*}
where $D_1$ is symmetric and 
\begin{equation*}
      D_1 = D - A_{21}^{\TT} D_+^{-1} A_{21}
                 +  A_{22}^{\TT} D_-^{-1} A_{22} \,. 
\end{equation*}
By Sylvester's law of inertia, the matrices $\tilde A_2$ and $A_3$, thus $A_2$ 
and $A_3$, have the same number of negative eigenvalues. The total number of 
negative eigenvalues of $A_3$ is determined by that of the submatrix 
$\begin{pmatrix} 0 &  (I + \alpha B)^{1/2} (\alpha B)^{1/2} \\  (I + \alpha B)^{1/2} (\alpha B)^{1/2} & \alpha D_1\end{pmatrix}$
. Define  
\begin{equation*}
     P_1 
     = \begin{pmatrix}
           (I + \alpha B)^{-1/4}(\alpha B)^{-1/4} & 0 \\
           0 & (I + \alpha B)^{-1/4}(\alpha B)^{-1/4}
        \end{pmatrix} \,. 
\end{equation*}
Then
\begin{equation*}
     A_4 = P_1 
     \begin{pmatrix} 
     0 & (I + \alpha B)^{1/2} (\alpha B)^{1/2} \\ 
     (I + \alpha B)^{1/2} (\alpha B)^{1/2} & \alpha D_1
     \end{pmatrix} P_1^{\TT}
     = \begin{pmatrix}
           0 & I \\
           I & \alpha D_2
     \end{pmatrix} \,, 
\end{equation*}
where 
\[ D_2 = (I + \alpha B)^{-1/4}(\alpha B)^{-1/4} D_1 
              (I + \alpha B)^{-1/4}(\alpha B)^{-1/4} \,. \] 
Note that $D_2$ is symmetric.
Hence, $D_2$ has a complete set of eigenvectors. Let 
$(\lambda, \bd{E}) = (\lambda, (\bd{e}_1, \bd{e}_2)^{\TT})$ be an  eigenpair of 
$A_4$ such that
\begin{equation}
    \begin{pmatrix}
         0 & I \\
         I & \alpha D_2 
    \end{pmatrix}
    \begin{pmatrix}
           \bd{e}_1^{\TT} \\
           \bd{e}_2^{\TT}
    \end{pmatrix}
    = \lambda  
    \begin{pmatrix}
           \bd{e}_1^{\TT} \\
           \bd{e}_2^{\TT}
    \end{pmatrix} \,.
\end{equation}
This is equivalent to 
\begin{equation*}
    {\bd{e}_2} = \lambda {\bd{e}_1} \,,
\qquad
     {\bd{e}_1}^{\TT} + {\alpha D_2} \bd{e}_2^{\TT} = \lambda \bd{e}_2^{\TT} \,. 
\end{equation*}
Note that $\lambda \neq 0$. Since $D_2$ is symmetric, it has a complete set of orthogonal eigenvectors. Let $\bd{e}$ be an arbitrary eigenvector of $D$ with eigenvalue $\lambda_{\bd e}$ and take $\bd{e}_2 = \bd{e}$. Then
\begin{equation}
    {\bd{e}_1} = \frac{1}{\lambda} {\bd{e}} \,,
\qquad
     \frac{1}{\lambda} {\bd{e}}^{\TT} + \alpha D_2 {\bd e}^{\TT} =  \lambda {\bd e}^{\TT} \,. 
\end{equation}
Thus 
\begin{equation}
     \frac{1}{\lambda} + \alpha \lambda_{\bd e} - \lambda = 0 \,,
\end{equation}
which has exactly one negative solution for $\lambda$. Since the set of 
eigenvectors of $D_2$ is complete, the matrix $A_4$ has exactly $\nu_0$ 
negative eigenvalues. Together with $D_-$, we have that $A_3$, thus $A_2$, has 
exactly $\nu_- + \nu_0$ negative eigenvalues, which prescribes $\nu_- + \nu_0$ 
conditions on $\VU$ such that  
\begin{equation}
      {\bd E}_k \cdot \VU(x) = 0 \,, \qquad 1 \leq k \leq \nu_- + \nu_0 \,, \quad x \geq 0 \,,
      \label{eq:9}
\end{equation}
where ${\bd E}_k$ are the eigenvectors associated with negative eigenvalues. 
Write each ${\bd E}_k$ as
\begin{equation*}
     {\bd E}_k = ({\bd e}_{k,+}, \,\, {\bd e}_{k,-}, \,\, {\bd e}_{k,0}, \,\, {\bd e}_{k, \CalL, 0})^{\TT}
\end{equation*}
and define the matrix $\bd E$ by
\begin{equation*}
      {\bd E} 
      = \begin{pmatrix}
             {\bd e}_{1, -} && {\bd e}_{1, \CalL, 0} \\
                & \cdots \\
             {\bd e}_{\nu_- + \nu_0, -} && {\bd e}_{\nu_- + \nu_0, \CalL, 0}  
         \end{pmatrix}_{(\nu_-+\nu_0) \times (\nu_- + \nu_0)} 
\end{equation*}
By~\eqref{cond:U-plus-0} we have
\begin{equation} \label{eq:U-minus-L}
      {\bd E}
      \begin{pmatrix}
           \vec U_- \\[10pt]
            \vec U_{\CalL, 0} 
      \end{pmatrix}
    = 0 \,,  \qquad \text{at $x=0$}.
\end{equation}
Now we show that $\bd E$ is nonsingular. Suppose not. Let $\mathcal{N}_2$ be the space spanned by the eigenvectors of $A_2$ with negative eigenvalues. Then there exists a nontrivial vector in $\mathcal{N}_2$ which takes the form 
\[ \hat {\bd E} = (\hat{\bd e}_+, 0, \hat{\bd e}_0, 0)^{\TT} \,. \]
By \eqref{A-2-Q}, if ${\bd E} = ({\bd e}_+, {\bd e}_-, {\bd e}_0, {\bd e}_{\CalL, 0})^{\TT}$ is an eigenvector of $A_2$ with eigenvalue $\lambda$, then ${\bd F} = Q({\bd e}_+, {\bd e}_-, {\bd e}_0, {\bd e}_{\CalL, 0})^{\TT}$ is an eigenvector of $\tilde A_2$ with the same eigenvalue. By the definition of $Q$, if we denote 
\[ {\bd F} = ({\bd f}_+, {\bd f}_-, {\bd f}_0, {\bd f}_{\CalL, 0})^{\TT} \,, \]
then 
\[ {\bd e}_- = {\bd f}_-, \qquad {\bd e}_{\CalL, 0} = {\bd f} _{\CalL, 0} \,.\]
Let $\mathcal{QN}_2$ as the space spanned by the eigenvectors of $\tilde A_2$ with negative eigenvalues. Then there exists a nontrivial $\hat {\bd F} \in \mathcal{QN}_2$ such that
\[ \hat {\bd F} = (\hat{\bd f}_+, 0, \hat{\bd f}_0, 0)^{\TT} \,. \]
Since $\mathcal{QN}_2$ is an invariant subspace of $\tilde A_2$, we have that
\begin{equation*}
      \tilde A_2 \hat {\bd F} 
      = (\alpha D_+ \hat{\bd f}_+^{\TT}, 0, 0, \hat{\bd f}_2)
           \in \mathcal{QN}_2 \,
\end{equation*}
where $\hat{\bd f}_2 = \alpha A_{21}^{\TT} \hat{\bd f}_+^{\TT} + 
          (I + \alpha B)^{1/2} (\alpha B)^{1/2} )^{\TT} \hat{\bd f}_0^{\TT}$.
By the symmetry and non-degeneracy of $\tilde A_2$, the quadratic form given by $\tilde A_2$ on $\mathcal{QN}_2$ is strictly negative. Therefore, 
\begin{equation*}
      \hat{\bd F}^{\TT} \tilde A_2 \hat{\bd F} = \alpha \hat{\bd f}_+^{\TT} D_+ \hat{\bd f}_+ \leq 0 \,. 
\end{equation*}
Since $D_+$ is strictly positive definite, we have that $\hat{\bd f}_+ = 0$ and
\[ \hat{\bd F}^{\TT} \tilde A_2 \hat{\bd F} = 0 \,,\]
which implies that $\hat{\bd F} = 0$. This contradicts the assumption that $\hat{\bd F}$ is non-trivial. Hence the matrix $\bd E$ is non-singular. By~\eqref{eq:U-minus-L} we derive that
\begin{align*}
   \VU_-(f) = 0 \,,
\qquad
   \VU_{\CalL, 0}(f) = 0 \,,
\qquad
   \text{at $x=0$}.
\end{align*}
Together with~\eqref{cond:U-plus-0}, we have the initial data for the ODE~\eqref{ode-U-2} as $\VU(f) = 0$ at $x=0$. Thus the only solution to this ODE is $\VU(f) = 0$ for all $x$. 
\end{proof}

Using Lemma~\ref{lem:for-C} we can now show
\begin{lem}
The matrix $C$ defined in~\eqref{def:C} is non-singular. 
\end{lem}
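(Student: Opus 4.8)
The plan is to argue by contradiction. Suppose $C$ is singular, so there is a nonzero $\mathbf a = (\mathbf a_+^{\TT}, \mathbf a_0^{\TT})^{\TT}\in\R^{\nu_+}\times\R^{\nu_0}$ with $C\mathbf a = 0$, and set
\begin{equation*}
  f = \sum_{i=1}^{\nu_+} a_{+,i}\, g_{+,i} + \sum_{j=1}^{\nu_0} a_{0,j}\, g_{0,j},
\end{equation*}
which by linearity solves the damped half-space equation \eqref{eq:half-space-damping-1} with incoming data $f|_{x=0} = \phi_0$ on $\{v_1+u>0\}$, where $\phi_0 := \sum_i a_{+,i} X_{+,i} + \sum_j a_{0,j} X_{0,j}\in\NullL$. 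Reading off the two block rows of $C\mathbf a = 0$ from the definitions of $C_{++}, C_{+0}, C_{0+}, C_{00}$ and \eqref{def:U-pm-0}, the identity $C\mathbf a = 0$ is precisely the statement
\begin{equation*}
  \VU_+(f)\big|_{x=0} = 0, \qquad \VU_0(f)\big|_{x=0} = 0.
\end{equation*}
(Lemma~\ref{lem:for-C} may be invoked here to upgrade these to $\VU(f)\equiv 0$ for all $x$, but only their values at $x=0$ are used below.)

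The heart of the argument is an energy identity at $x=0$. Multiplying \eqref{eq:half-space-damping-1} by $f$, integrating in $v$ and in $x\in[0,\infty)$, and using that the boundary flux $J(x) := \langle (v_1+u) f, f\rangle_v$ tends to $0$ as $x\to\infty$ (Proposition~\ref{prop:wellposed}(c), and its analogue for general $a$), one gets
\begin{equation*}
  \int_0^\infty \langle f, \CalL_d f\rangle_v \,\dx = \tfrac12\, J(0), \qquad J(0) = \langle (v_1+u) f, f\rangle_v\big|_{x=0}.
\end{equation*}
By the coercivity of $\CalL_d$ in Lemma~\ref{lem:prop-L-d}, the left side is at least $\sigma_1\|f\|_{(L^2(a\dv\dx))^m}^2\geq 0$, so it is enough to prove $J(0)\leq 0$; then $f\equiv 0$, which (as detailed below) forces $\mathbf a = 0$ — a contradiction.

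To compute $J(0)$, write the boundary trace $f|_{x=0} = \phi_0 + \rho$, where $\rho$ is supported on $\{v_1+u<0\}$ (it agrees with $\phi_0$ on the incoming set, by the boundary condition). Expanding $J(0) = \langle (v_1+u)(\phi_0+\rho),\phi_0+\rho\rangle_v$ and using \eqref{cond:X-pm-0} — which diagonalizes $(v_1+u)$ on $\NullL$ in the $X$-basis and gives $\langle (v_1+u)X_{0,k},X_{0,\ell}\rangle_v = 0$, hence $\langle (v_1+u)\phi_0,\phi_0\rangle_v = \sum_i\gamma_{+,i}a_{+,i}^2$ — together with the two conditions above, which when restricted to $\{v_1+u<0\}$ read $\langle |v_1+u|X_{+,i},\rho\rangle_v = \gamma_{+,i}a_{+,i}$ and $\langle |v_1+u|X_{0,j},\rho\rangle_v = 0$, a short computation collapses the cross term $2\langle(v_1+u)\phi_0,\rho\rangle_v$ to $-2\sum_i\gamma_{+,i}a_{+,i}^2$ and yields
\begin{equation*}
  J(0) = -\sum_{i=1}^{\nu_+}\gamma_{+,i}\,a_{+,i}^2 \;-\; \int_{v_1+u<0} |v_1+u|\,|\rho|^2\,\dv \;\leq\; 0.
\end{equation*}
Combined with the energy identity this gives $f\equiv 0$ and $J(0) = 0$; the latter forces $a_{+,i}=0$ for all $i$ (since $\gamma_{+,i}>0$) and $\rho\equiv 0$, so $\phi_0 = f|_{x=0} = 0$ everywhere, whence $a_{0,j}=0$ for all $j$ by orthonormality of the $X_{0,j}$. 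Thus $\mathbf a = 0$, contradicting $\mathbf a\neq 0$, and $C$ is non-singular. The one genuinely delicate step is the evaluation of $J(0)$: the cross terms between $\phi_0$ and $\rho$ must be matched against the orthogonality relations \eqref{cond:X-pm-0} and the two trace conditions in exactly the right combination to produce the clean non-positive expression above; a secondary technical point is justifying $J(x)\to 0$ as $x\to\infty$ for weights $a$ other than $1+|v|$.
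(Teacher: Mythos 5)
Your proof is correct, but it takes a genuinely different route from the paper's. The paper argues as follows: from $C\mathbf a=0$ it builds the same combination $g$ of the $g_{+,i},g_{0,j}$, invokes Lemma~\ref{lem:for-C} to upgrade $\VU_+(g)=\VU_0(g)=0$ at $x=0$ to $\VU(g)\equiv 0$ for all $x$, so that $g$ is simultaneously a solution of the damped and the undamped problem~\eqref{eq:half-space-full} with end-state $0$, and then concludes $\mathbf a=0$ from the uniqueness of the undamped half-space problem (itself proved by a flux-monotonicity, conservation and spectral-gap argument in the spirit of \cite{CoronGolseSulem:88}). You instead stay entirely at the level of the damped equation: coercivity of $\CalL_d$ (Lemma~\ref{lem:prop-L-d}) makes the interior energy term nonnegative, and the two trace conditions $\VU_+(f)|_{x=0}=\VU_0(f)|_{x=0}=0$, combined with the orthogonality relations~\eqref{cond:X-pm-0}, force the boundary flux $J(0)=\La (v_1+u)f,f\Ra_v|_{x=0}$ to be nonpositive — your evaluation $J(0)=-\sum_i\gamma_{+,i}a_{+,i}^2-\int_{v_1+u<0}|v_1+u||\rho|^2\dv$ checks out — whence $f\equiv 0$ and $\mathbf a=0$. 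This buys self-containedness: you need neither Lemma~\ref{lem:for-C} (whose Sylvester-inertia argument is the most involved part of the paper's recovery machinery) nor the uniqueness theory for the undamped equation; the paper's route, conversely, recycles Lemma~\ref{lem:for-C} and Theorem~\ref{thm-CGS}-type uniqueness, which it needs anyway for Proposition~\ref{prop:recovery}. Two technical points you rely on are at the same level of rigor as the paper's own sketch: the full trace $f|_{x=0}$ (not just the even part) exists because Proposition~\ref{prop:wellposed}(b) gives $(v_1+u)\del_x f\in (L^2(\tfrac1a\dv\dx))^m$, and the statements $J$ finite, monotone and $J(x)\to 0$ as $x\to\infty$ require $|v_1+u|\lesssim a(v)$ (true for the hard-sphere weight $\omega_0=1$ and for bounded $\VV$), exactly the same implicit assumption made in the paper's uniqueness argument where $\int (v_1+u)h^2\dv$ is manipulated; you flag this yourself, so it is a caveat rather than a gap.
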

\begin{proof}
First we recall \cite{CoronGolseSulem:88} the uniqueness property of the solution to \eqref{eq:half-space-full}: if $f$ is a solution to \eqref{eq:half-space-full} which satisfies $f \in (L^2(a\dv\dx))^m$ and $(v_1+u) \del_x f \in (L^2(\tfrac{1}{a} \dv\dx))^m$, then $f$ must be unique.  For the convenience of the reader, we brief explain its proof: Suppose $h$ is a solution to the half-space equation \eqref{eq:half-space-full} with incoming data $\phi=0$. Then $\int_{\R^3} (v_1+u) h^2 \dv$ is decreasing in $x$. Since there exists $h_\infty \in H^+ \oplus H^0$ such that $h - h_\infty \in (L^2(\dv\dx))^m$, we can find a sequence $x_k$ such that 
\begin{equation}\nn
    \int_{\R^3} (v_1+u) h^2(x_k, v) \dv \to \int_{\R^3} (v_1+u) h_\infty^2(v) \dv \geq 0 \,.
\end{equation}
Hence $\int_{\R^3} (v_1+u) h^2(x, v) \dv \geq 0$ for all $x \geq 0$. This holds in particular at $x=0$. Since the incoming data is zero at $x=0$,  the outgoing data at $x=0$ must also be zero and $\int_{\R^3} (v_1+u) h^2(x_k, v) \dv = 0$ for all $x \geq 0$. The conservation property of the half-space equation then implies that $h(x, \cdot) \in (\NullL)^\perp$. By multiplying  the equation by $h$ and integrate over $v$, we have $\La h, Lh \Ra = 0$ for all $x \geq 0$. Hence $\tilde\CalP h =0$ for all $x \geq 0$ by the spectral gap of $\CalL$ in (A4). Therefore $h \equiv 0$ and the solution to the half-space equation is unique. 

Now suppose $C$ is singular. Then there exist constants
\[ (\eta_{+,1}, \cdots, \eta_{+,\nu_+}, \eta_{0,1}, \cdots, \eta_{0,\nu_0}) \neq 0 \] such that we can find incoming data 
\begin{equation*}
     \Vphi_{g} = \sum_{j=1}^{\nu_+}\eta_{+,j} X_{+,j} + \sum_{k=1}^{\nu_0} \eta_{0,k} X_{0,k}
\end{equation*}
that gives rise to a solution $\Vg$ satisfying that
\begin{equation}
   \label{eq:11}
\begin{aligned}  
    &\La (v_1+u) X_{+,1}, \,\, \Vg \Ra_v  
    = \cdots
    = \La (v_1+u) X_{+,\nu_+}, \,\, \Vg \Ra_v  
    = 0 \,, 
\\    
    &\La (v_1+u) X_{0,1}, \,\,  \Vg \Ra_v
    = \cdots
    =  \La (v_1+u) X_{0,\nu_0}, \,\,  \Vg \Ra_v
    = 0 \,, 
\end{aligned}
\qquad \text{at $x=0$}.
\end{equation}
By Lemma~\ref{lem:for-C}, we have
\begin{equation}
   \VU(g) = 0
\qquad
   \text{for all $x$} \,.
\end{equation}
Thus the solution $\Vg$ satisfies both the damped and the original half-space equation \eqref{eq:half-space-1} with the end-state $g_\infty = 0$. By the uniqueness of solutions to \eqref{eq:half-space-1}, we have $\eta_{+,1} = \cdots = \eta_{+, \nu_+} = \eta_{0,1} = \cdots = \eta_{0,\nu_0} = 0$ which is a contraction. Thus $C$ must be non-singular. 
\end{proof}

%

Now we state and prove the main recovery theorem. 
\begin{prop}[Recovery]\label{prop:recovery}
  Let $\Vphi \in (L^2(a(v) \Id_{v_1+u>0}\dv))^m$ and $f$ be the solution to the damped equation~\eqref{eq:half-space-damping-1} with incoming data $\phi$. Let $C, g_{+, i}, g_{0, j}$ be the matrix and the family of auxiliary functions defined in~\eqref{def:C-no-H-0} and~\eqref{def:C}. Define the coefficient vector $\eta = \left(\eta_{+,1},\cdots \eta_{+, \nu_+},
  \eta_{0,1}, \cdots,\eta_{0,\nu_0}\right)^{\TT}$ such that 
\begin{equation}\label{eq:linearsysC}
  \eta = C^{-1 }(\VU_+(f), \VU_0(f))^{\TT} \Big|_{x=0} 
\end{equation}
and
\begin{align} \label{def:g-Phi}
   g = \sum_{i=1}^{\nu_+} \eta_{+,i} g_{+,i} + \sum_{j=1}^{\nu_0} \eta_{0,j} g_{0,j} \,,
\qquad
   \Phi = \sum_{i=1}^{\nu_+} \eta_{+,i} X_{+,i} + \sum_{j=1}^{\nu_0} \eta_{0,j} X_{0,j} \,.
\end{align}
Define
\begin{equation} \label{soln:approx}
   f_{\phi} 
   = f - g + \Phi 
   = f- \sum_{i=1}^{\nu_+} \eta_{+,i} (g_{+, i} - X_{+,i}) - \sum_{j=1}^{\nu_0} 
    \eta_{0,j} ( g_{0, j} - X_{0,j}) \,.
  \end{equation}
Then $f_\phi$ is the unique solution to the half-space
  equation 
\begin{equation} \label{eq:half-space-full}
  \begin{aligned}
     (v_1 + u) \del_x &f_\phi + {\mathcal{L}} f_\phi =  0, \\
    \Vecf_\phi|_{x=0} &=  \Vecphi(v) \,,  \qquad &&v_1 + u > 0 \,,
\\
      f_\phi - f_{\phi, \infty} \in L^2 &(\dx; L^2(\dv))^m \,,
\end{aligned}
\end{equation}
  where $f_{\phi, \infty} \in H^+ \oplus H^0$ is the end-state given by
  \begin{equation*}
  f_{\phi, \infty} = \sum_{j=1}^{\nu_+} \eta_{+,j} X_{+,j} + \sum_{k=1}^{\nu_0} \eta_{0,k} X_{0,k} \,. 
  \end{equation*}
\end{prop}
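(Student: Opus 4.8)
The plan is to verify directly that the explicitly constructed function $f_\phi = f - g + \Phi$ satisfies each of the three requirements in \eqref{eq:half-space-full}, and then to invoke the uniqueness property of the undamped half-space equation recalled in the proof of the lemma asserting that $C$ is non-singular.

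First I would observe that, by linearity, $h := f - g$ solves the damped equation \eqref{eq:half-space-damping-1}: both $f$ and each $g_{+,i}$, $g_{0,j}$ are solutions of the damped problem furnished by Proposition~\ref{prop:wellposed}, so $h$ is too, with $(v_1+u)\del_x h \in (L^2(\tfrac1a\dv\dx))^m$ and boundary data $h|_{x=0} = \phi - \Phi$ on $\{v_1+u>0\}$ (using $g_{+,i}|_{x=0}=X_{+,i}$ and $g_{0,j}|_{x=0}=X_{0,j}$). The crucial point is that the damping terms acting on $h$ vanish identically in $x$. To see this I would compute the trace of the moment vector: by the definitions of $C$ in \eqref{def:C} and of $\vec U_+,\vec U_0$ in \eqref{def:U-pm-0} together with \eqref{def:g-Phi}, one has $\bigl(\vec U_+(g),\vec U_0(g)\bigr)^{\TT}\big|_{x=0} = C\eta$, and since $\eta = C^{-1}\bigl(\vec U_+(f),\vec U_0(f)\bigr)^{\TT}\big|_{x=0}$ this gives $\vec U_+(h)\big|_{x=0} = \vec U_0(h)\big|_{x=0} = 0$. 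Lemma~\ref{lem:for-C} then yields $\vec U(h) = 0$ for all $x$, i.e.\ all four families of moments $\vec U_+(h),\vec U_-(h),\vec U_0(h),\vec U_{\CalL,0}(h)$ vanish. Consequently the right-hand side of \eqref{eq:half-space-damping-1} reduces to $(v_1+u)\del_x h + \CalL h$, so $h$ solves the \emph{undamped} equation $(v_1+u)\del_x h + \CalL h = 0$ in the sense of distributions.

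Next I would add back $\Phi$. Since $\Phi \in \NullL$ is independent of $x$, we have $\del_x\Phi = 0$ and $\CalL\Phi = 0$, so $f_\phi = h + \Phi$ again satisfies $(v_1+u)\del_x f_\phi + \CalL f_\phi = 0$ with $(v_1+u)\del_x f_\phi \in (L^2(\tfrac1a\dv\dx))^m$. For the boundary condition, on $\{v_1+u>0\}$ we get $f_\phi|_{x=0} = (\phi - \Phi) + \Phi = \phi$, as required. For the far-field condition, Proposition~\ref{prop:wellposed} gives $f \in (L^2(\dv\dx))^m$ and each auxiliary function lies in $(L^2(\dv\dx))^m$, hence $h = f - g \in (L^2(\dv\dx))^m$, i.e.\ $f_\phi - \Phi \in (L^2(\dx;L^2(\dv)))^m$; since $\Phi = \sum_j \eta_{+,j}X_{+,j} + \sum_k\eta_{0,k}X_{0,k} \in H^+\oplus H^0$, this identifies the end-state as $f_{\phi,\infty} = \Phi$.

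Finally, uniqueness follows from the energy argument recalled in the proof of the lemma that $C$ is non-singular: two solutions of \eqref{eq:half-space-full} with incoming data $\phi$ and the stated regularity differ by a solution with zero incoming data, which must vanish identically. I expect the only delicate point to be the bookkeeping in the second step, namely noting that $\Phi$ itself does \emph{not} solve the damped equation — its damping contributions $\langle(v_1+u)X_{\pm,k},\Phi\rangle_v$, $\langle(v_1+u)X_{0,k},\Phi\rangle_v$ are generally nonzero — so that one really must exploit the vanishing of \emph{all} of $\vec U(h)$ via Lemma~\ref{lem:for-C}, not merely that of $\vec U_+$ and $\vec U_0$; everything else is a direct substitution.
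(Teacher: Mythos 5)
Your proposal is correct and follows essentially the same route as the paper: show $f-g$ solves the damped equation, use the definition of $\eta$ via $C$ to get $\vec U_+(f-g)=\vec U_0(f-g)=0$ at $x=0$, invoke Lemma~\ref{lem:for-C} to conclude the damping terms vanish identically, and add back $\Phi\in\NullL$ to recover the boundary data and the end-state, with uniqueness from the energy argument recalled in the non-singularity lemma for $C$. Your explicit verification of the far-field condition and the remark that $\Phi$ itself does not solve the damped equation are slightly more detailed than the paper's proof but do not change the argument.
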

\begin{proof}
We directly show that $f_\phi$ satisfies~\eqref{eq:half-space-full}. First, by the definitions of $g_{+, i}, g_{0, j}$, we have $\Vecf_\phi|_{x=0} =  \Vecphi(v)$ for $v_1 + u > 0$. Second, 
it follows from the definition in~\eqref{def:g-Phi} that $g$, thus $f - g$, are both solutions to the damped equation~\eqref{eq:half-space-damping-1}.
By the definition of $\eta$ we have
\begin{align*}
    \VU_+(f - g) = 0 \,,
\qquad
   \VU_0(f - g) = 0 \,.
\end{align*}
Hence by Lemma~\ref{lem:for-C}, we have $\VU(f - g) = 0$. This shows $f - g$ is in fact a solution to the undamped equation~\eqref{eq:half-space-full}. Since every $X_{+, i}$ and $X_{0, j}$ are solutions to~\eqref{eq:half-space-full}, we have $f_\phi$ as a solution to~\eqref{eq:half-space-full}.  
\end{proof}

\section{Galerkin Approximation and Numerical Scheme}
Let us now use the variational formulation \eqref{variational} to
design a Galerkin method to approximate the solution to the damped
equation \eqref{eq:half-space-damping-1}. There are two parts in this
section: first we show the construction of the finite-dimensional
approximation and its error estimate. Then we transform the
finite-dimensional variational form into an ODE system which will set
base for our numerical scheme.

\subsection{Galerkin approximation.} First we use both parts of the Babu\v{s}ka-Aziz lemma to show the validity of the Galerkin approximation and its quasi-optimality. 

\begin{prop}[Approximations in $\R^3$]\label{prop:approximation-NK}
Suppose $\{\psi_n^{(1)}\}_{n=1}^\infty$ is an orthonormal basis of $L^2(\!\dv_1)$ such that
\begin{itemize}
\item $\psi_{2n-1}^{(1)}(v_1)$ is odd and $\psi_{2n}^{(1)}(v_1)$ is even in $v_1$ with respect to $-u$ for any $n \geq 1$;


\item $(v_1+u) \psi_{2n}^{(1)}(v_1) \in \Span\{\psi_1^{(1)}, \cdots, \psi_{2n+1}^{(1)}\}$ for each $n \geq 1$.
\end{itemize}
Suppose $\{\psi_n^{(2)}\}, \{\psi_n^{(3)}\}_{n=1}^\infty$ are orthonormal bases for $L^2(\!\dv_2)$ and $L^2(\!\dv_3)$ respectively. 
Define the closed subspace $\Gamma_{NK}$ as
\begin{equation*}
   \Gamma_{NK} = \left\{g(x, v) \in \Gamma \Big| \, g(x, v) = \sum_{i=1}^m\sum_{l,n =1}^K\sum_{k=1}^{2N+1}  g_{kln}^{(i)}(x) \psi_k^{(1)}
(v_1)\psi_l^{(2)}(v_2)\psi_n^{(3)}(v_3) \, \Be_i, \,\, g_{kln}^{(i)} \in H^1(\!\dx) \right\}
   \,,
\end{equation*} 
where $\Be_i = (0, \cdots, 0, 1, 0, \cdots, 0)^T$ is the standard $i^{th}$ basis vector of $\mathbb{R}^m$ with $1 \leq i \leq m$. 
Then 

\Ni (a) there exists a unique $f_{NK} \in \Gamma_{NK}$ such that  
\begin{equation}\label{def:f-NK}
     f_{NK}(x, v) 
     =  \sum_{i=1}^m \sum_{l,n =1}^K\sum_{k=1}^{2N+1}  a_{kln}^{(i)}(x) \psi_k^{(1)}(v_1)\psi_l^{(2)}(v_2)\psi_n^{(3)}(v_3) \, \Be_i \,, 
\end{equation}
which satisfies 
\begin{equation}\label{eq:variational-finitedim-NK}
      \CalB(f_{NK}, g) = l(g) \quad \text{for every $g \in \Gamma_{NK}$} \,, 
\end{equation}
where $\CalB$ and $l$ for the damped equation and are defined in \eqref{def-B} and \eqref{def-l} respectively. The coefficients $\{a_{kln}^{(i)}(x)\}$ satisfy
\begin{equation*}
     a_{kln}^{(i)}(\cdot) \in C^1[0, \infty) \cap H^1(0, \infty), \qquad 
1 \leq k \leq 2N+1 \,,  \,\, 1 \leq l,n \leq K \,,
\,\, 1 \leq i \leq m \,. 
\end{equation*}

\Ni (b) There exists a constant $C_0$ such that
\begin{equation*}
    \|f - f_{NK}\|_\Gamma \leq C_0 \inf_{w \in \Gamma_{NK}} \|f - w\|_{\Gamma} \,, 
\end{equation*}
where $\norm{\cdot}_\Gamma$ is the norm defined in \eqref{def-Gamma}.
\end{prop}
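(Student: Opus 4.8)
The plan is to obtain both parts of the proposition as a direct application of the Babu\v{s}ka--Aziz lemma (Theorem~\ref{thm-BA}). Boundedness of $\CalB$ and of $l$ on $\Gamma_{NK}\subseteq\Gamma$ is inherited from the continuous setting (as already recorded in the proof of Proposition~\ref{prop:wellposed}), so the only substantive point is the \emph{discrete} inf-sup condition: for every $f_{NK}\in\Gamma_{NK}$ one must produce $\psi\in\Gamma_{NK}$ with $\CalB(f_{NK},\psi)\ge\kappa_0\|f_{NK}\|_\Gamma^2$ and $\|\psi\|_\Gamma\le\kappa_1\|f_{NK}\|_\Gamma$, with $\kappa_0,\kappa_1$ independent of $N$ and $K$. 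Granting this, part~(b) of Theorem~\ref{thm-BA} yields existence, uniqueness, and quasi-optimality simultaneously, which is exactly parts~(a) and~(b); the representation~\eqref{def:f-NK} is then forced by $f_{NK}\in\Gamma_{NK}$, and $a^{(i)}_{kln}\in H^1(0,\infty)\hookrightarrow C^0[0,\infty)$ is automatic in one space dimension.

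For the discrete inf-sup estimate I would repeat the two-step construction in the proof of Proposition~\ref{inf-sup}, checking at each step that the test function it produces really lies in $\Gamma_{NK}$. Put $\psi_1=f_{NK}\in\Gamma_{NK}$, so that Lemma~\ref{lem:prop-L-d} gives $\CalB(f_{NK},\psi_1)\ge\sigma_1\|f_{NK}\|^2_{(L^2(a\dv\dx))^m}$. For $\psi_2$ take the same weighted combination $(1+|v_1+u|+|v_2|+|v_3|)^{-\omega_0}(v_1+u)\del_x f_{NK}^+$ as there (the weight being harmless when $a(v)=\mathcal{O}(1)$, which holds in both examples of interest); as in Proposition~\ref{inf-sup} this function is odd with respect to $-u$, so $\psi_2^+=0$, it obeys $\|\psi_2\|_\Gamma\le C\|f_{NK}\|_\Gamma$, and $\CalB(f_{NK},\psi_2)\ge\|(v_1+u)\del_x f_{NK}^+\|^2_{(L^2(\frac1a\dv\dx))^m}-\kappa_2\|f_{NK}\|^2_{(L^2(a\dv\dx))^m}$. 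The new ingredient is $\psi_2\in\Gamma_{NK}$: expanding $f_{NK}^+$, only the even factors $\psi^{(1)}_{2n}$ occur, and the structural hypothesis $(v_1+u)\psi^{(1)}_{2n}\in\Span\{\psi^{(1)}_1,\dots,\psi^{(1)}_{2n+1}\}$ forces $(v_1+u)\del_x f_{NK}^+$ to stay in $\Span\{\psi^{(1)}_1,\dots,\psi^{(1)}_{2N+1}\}$ in $v_1$, with the $v_2,v_3$-factors unchanged and $L^2(\dx)$ coefficients in $x$ --- precisely the form admitted by $\Gamma_{NK}$. Taking $\psi=\kappa_3\psi_1+\psi_2\in\Gamma_{NK}$ with $\kappa_3$ large enough then gives $\CalB(f_{NK},\psi)\ge\kappa_0\|f_{NK}\|_\Gamma^2$ and $\|\psi\|_\Gamma\le\sqrt{1+\kappa_3}\,\|f_{NK}\|_\Gamma$, and the symmetry of $\CalB$ promotes this to the two-sided condition~\eqref{cond:inf-sup} on $\Gamma_{NK}$, with constants not depending on $N,K$.

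It then remains only to upgrade $a^{(i)}_{kln}$ from $C^0$ to $C^1[0,\infty)$. For this I would substitute the ansatz~\eqref{def:f-NK} into the finite-dimensional weak form~\eqref{eq:variational-finitedim-NK}, test against $\psi^{(1)}_k\psi^{(2)}_l\psi^{(3)}_n\Be_i$ multiplied by functions compactly supported in $x$, and read off the resulting first-order linear ODE system for the coefficient vector; since its right-hand side is a linear, hence continuous, function of the (already continuous) coefficients, their derivatives are continuous. This step duplicates part of the reduction to an ODE system carried out in the next subsection and may simply be deferred there.

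The one point that genuinely requires the special structure --- and hence the main obstacle --- is the verification $\psi_2\in\Gamma_{NK}$: one needs to know that multiplying $\del_x f_{NK}^+$ by $(v_1+u)$ (and by the weight) does not leave $\Span\{\psi^{(1)}_1,\dots,\psi^{(1)}_{2N+1}\}$. This is exactly what the two hypotheses on the one-dimensional basis $\{\psi^{(1)}_n\}$ (the parity split and $(v_1+u)\psi^{(1)}_{2n}\in\Span\{\psi^{(1)}_1,\dots,\psi^{(1)}_{2n+1}\}$) are engineered to guarantee; once it is in hand, the estimates of Propositions~\ref{inf-sup} and~\ref{prop:wellposed} transcribe almost verbatim to $\Gamma_{NK}$ with constants uniform in $N,K$, which is what makes the Galerkin scheme convergent rather than merely well-posed at each fixed level.
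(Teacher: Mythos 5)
Your overall route is the same as the paper's: both parts follow from the Babu\v{s}ka--Aziz lemma once the discrete inf-sup condition on $\Gamma_{NK}$ is verified, and the candidate test functions are the same pair $\psi_1=f_{NK}$ and a weighted multiple of $(v_1+u)\del_x f_{NK}^+$. The gap is in the admissibility of your $\psi_2$. You take $\psi_2=(1+|v_1+u|+|v_2|+|v_3|)^{-\omega_0}(v_1+u)\del_x f_{NK}^+$ and argue that it lies in $\Gamma_{NK}$ because $(v_1+u)\psi_{2n}^{(1)}\in\Span\{\psi_1^{(1)},\dots,\psi_{2n+1}^{(1)}\}$. That hypothesis only controls the factor $(v_1+u)$: it does show that $(v_1+u)\del_x f_{NK}^+$ stays in the finite-dimensional velocity span, but multiplication by the weight $(1+|v_1+u|+|v_2|+|v_3|)^{-\omega_0}$ is not span-preserving unless $\omega_0=0$. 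The proposition is stated for every $\CalL$ satisfying (A1)--(A4) with $a(v)=(1+|v|)^{\omega_0}$, $0\le\omega_0\le 1$ --- in particular the hard-sphere linearized Boltzmann case $\omega_0=1$, which is precisely why the weight is in the framework --- so your parenthetical restriction to $a(v)=\mathcal{O}(1)$ does not prove the statement as given: for $\omega_0>0$ your $\psi_2$ is simply not an element of $\Gamma_{NK}$ and cannot be used as a test function in \eqref{eq:variational-finitedim-NK}.

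The paper's proof avoids this by setting $\psi_2=\CalP_N\bigl((1+|v_1+u|+|v_2|+|v_3|)^{-\omega_0}(v_1+u)\del_x f_{NK}^+\bigr)$, where $\CalP_N$ is the orthogonal projection (in $v$) onto the approximation space; membership is then automatic, and the structural hypothesis on the basis enters the \emph{estimate} instead of the membership: since $(v_1+u)\del_x f_{NK}^+$ itself lies in the span, self-adjointness of $\CalP_N$ gives $\La (v_1+u)\del_x f_{NK}^+,\,\psi_2\Ra_{x,v}=\La (v_1+u)\del_x f_{NK}^+,\,w\,(v_1+u)\del_x f_{NK}^+\Ra_{x,v}\ge c\,\|(v_1+u)\del_x f_{NK}^+\|^2_{(L^2(\tfrac{1}{a}\dv\dx))^m}$ with $w$ the weight, while the remaining terms are bounded exactly as in Proposition~\ref{inf-sup}, with constants independent of $N$ and $K$. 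If you replace your $\psi_2$ by this projected version (noting that the projection preserves the odd parity, so $\psi_2^+=0$ and $\|\psi_2\|_\Gamma\lesssim\|f_{NK}\|_\Gamma$ still hold), the rest of your argument --- boundedness of $\CalB$ and $l$, symmetry of $\CalB$ to get the two-sided inf-sup condition, and deferring the $C^1[0,\infty)\cap H^1(0,\infty)$ regularity of the coefficients to the ODE reduction --- goes through as you wrote it.
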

\begin{proof}
Both (a) and (b) directly follow from the Babu\v{s}ka-Aziz lemma as long as we verify the inf-sup condition of $\CalB$ on the finite-dimensional subspace $\Gamma_{NK}$. Since it is similar as the continuum case in Proposition~\ref{inf-sup}, we only explain the modification in choosing the test functions $\psi_1$ and $\psi_2$. For any $\Vf \in \Gamma_N$, we choose 
\begin{equation*}    
    \psi_1 = \Vf \,, 
\qquad 
    \psi_2 = \CalP_N \left(\frac{1}{(1 + |v_1 + u| + |v_2| + |v_3|)^{\omega_0}} (v_1+u) \del_x \Vf^+ \right) \,,
\end{equation*}    
where $\CalP_N: (L^2(\dv))^m \to \Gamma_N$ is the projection onto $\Gamma_N$. The rest of the estimates are similar to the proof in Section 3, and thus omitted. 
\end{proof}

Since our numerical examples are both in one-dimension for a single species, we apply Proposition \ref{prop:approximation-NK} to $\VV \subseteq \R^1$ and $m=1$ to obtain the following corollary for two special cases:

\begin{cor}[Approximations in $\R^1$]\label{prop:approximation}
Let $\VV = \R^1$ or $\VV = [-1, 1]$. Let $u \in \R$ be arbitrary if $\VV = \R^1$ and $u = 0$ if $\VV= [-1, 1]$. Suppose $\{\psi_n\}_{n=1}^\infty$ is an orthonormal basis of $L^2(\dv)$ such that
\begin{itemize}
\item $\psi_{2n-1}$ is odd and $\psi_{2n}$ is even in $v$ with respect to $-u$ for any $n \geq 1$;


\item $(v+u) \psi_{2n}(v) \in \Span\{\psi_1, \cdots, \psi_{2n+1}\}$ for each $n \geq 1$.
\end{itemize}
 
Define the closed subspace $\Gamma_N$ as
\begin{equation*}
   \Gamma_N = \left\{g(x, v) \in \Gamma \Big| \, g(x, v) = \sum_{k=1}^{2N+1}  g_k(x) \psi_k(v), \,\, g_k \in H^1(\dx) \right\}
   \,. 
\end{equation*} 
Then there exists a unique $f_N \in \Gamma_N$ such that  
\begin{equation}\label{def:f-N}
     f_N(x, v) =  \sum_{k=1}^{2N+1} a_k(x) \psi_k (v) \,, 
\qquad a_k(x) \in C^1[0, \infty), \,\, 1 \leq k \leq 2N+1 \,,
\end{equation}
which satisfies 
\begin{equation}\label{eq:variational-finitedim}
      \CalB(f_N, g) = l(g) \,, \qquad \text{for every $g \in \Gamma_N$} \,, 
\end{equation}
where $\CalB$ and $l$ are defined in \eqref{def-B} and \eqref{def-l}
respectively.  
\end{cor}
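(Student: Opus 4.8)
The plan is to deduce Corollary~\ref{prop:approximation} directly from Proposition~\ref{prop:approximation-NK} by specializing to a one-dimensional velocity variable and a single species ($m=1$), so most of the work is checking that the $\R^3$ argument never used the multidimensional structure and that the two bullet hypotheses here on $\{\psi_n\}$ coincide with those imposed on $\{\psi_n^{(1)}\}$ there. First I would note that $\Gamma_N$ is a closed subspace of $\Gamma$ playing the role of $\Gamma_{NK}$, with the parity condition and the mapping condition $(v+u)\psi_{2n}\in\Span\{\psi_1,\dots,\psi_{2n+1}\}$ matching exactly. For $\VV=[-1,1]$ with $u=0$ I would observe that the shifted even--odd splitting \eqref{even-odd} collapses to ordinary parity in $v$ and that $a(v)=1+|v|$ is bounded above and below on $[-1,1]$, so the whole variational set-up of Section~\ref{sec:setting} applies unchanged; this is essentially the only place where something must be said beyond ``apply Proposition~\ref{prop:approximation-NK}'', since that proposition is literally stated for $\VV=\R^3$.

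Next I would invoke the Babu\v{s}ka--Aziz lemma, Theorem~\ref{thm-BA}: boundedness of $\CalB$ and $l$ on $\Gamma$ is immediate from \eqref{def-B} and \eqref{def-l}, so by part~(b) everything reduces to verifying the inf--sup condition of $\CalB$ on the finite-dimensional space $\Gamma_N\times\Gamma_N$. Here I would repeat the two-step construction of Proposition~\ref{inf-sup}: given $\Vf\in\Gamma_N$ take $\psi_1=\Vf$ and $\psi_2=\CalP_N\!\bigl(\tfrac{1}{(1+|v+u|)^{\omega_0}}(v+u)\del_x\Vf^+\bigr)$, with $\CalP_N$ the orthogonal projection onto $\Gamma_N$. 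The one computation that needs a genuine (if small) new remark --- and what I expect to be the main obstacle --- is that projecting onto $\Gamma_N$ does not destroy the coercive lower bound on the derivative term. The mapping hypothesis forces $(v+u)\del_x\Vf^+$ to lie in $\Gamma_N$ already: the even part $\Vf^+$ is supported on indices $2n\leq 2N$, so $(v+u)\Vf^+\in\Span\{\psi_1,\dots,\psi_{2N+1}\}$; hence $\La(v+u)\del_x\Vf^+,\psi_2\Ra_{x,v}=\La(v+u)\del_x\Vf^+,\tfrac{1}{a}(v+u)\del_x\Vf^+\Ra_{x,v}=\norm{(v+u)\del_x\Vf^+}^2_{L^2(\tfrac{1}{a}\dv\dx)}$ exactly, and the remaining zeroth-order terms in $\CalB(\Vf,\psi_1)$ and $\CalB(\Vf,\psi_2)$ are bounded verbatim as in Lemma~\ref{lem:prop-L-d} and Proposition~\ref{inf-sup}. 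Taking $\kappa_3\psi_1+\psi_2$ with $\kappa_3$ large then produces the inf--sup constant on $\Gamma_N$.

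Finally, with the inf--sup condition in hand, part~(b) of Theorem~\ref{thm-BA} gives a unique $f_N\in\Gamma_N$ with $\CalB(f_N,g)=l(g)$ for all $g\in\Gamma_N$; writing $f_N$ in the ansatz \eqref{def:f-N} and testing against each $\psi_k$, $1\leq k\leq 2N+1$, turns \eqref{eq:variational-finitedim} into a constant-coefficient first-order ODE system for $(a_1(x),\dots,a_{2N+1}(x))$, with the incoming data and the $H^1$-in-$x$ requirement built into $\Gamma_N$ supplying the correct number of conditions; a standard bootstrap on this ODE then gives $a_k\in C^1[0,\infty)$, exactly as in Proposition~\ref{prop:approximation-NK}. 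I do not anticipate any difficulty beyond the bookkeeping just described.
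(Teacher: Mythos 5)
Your proposal follows the paper's own route: the corollary is obtained by specializing Proposition~\ref{prop:approximation-NK} to one velocity dimension and $m=1$, and that proposition is proved exactly as you describe, by verifying the finite-dimensional inf-sup condition for the Babu\v{s}ka--Aziz lemma with the test functions $\psi_1=\Vf$ and $\psi_2=\CalP_N\bigl(\tfrac{1}{(1+|v+u|)^{\omega_0}}(v+u)\del_x\Vf^+\bigr)$. Your additional observation that the second bullet hypothesis keeps $(v+u)\del_x\Vf^+$ inside $\Span\{\psi_1,\dots,\psi_{2N+1}\}$, so the projection does not spoil the coercive derivative term, correctly supplies the detail the paper leaves implicit in ``the rest of the estimates are similar.''
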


The approximate solution to the undamped solution is constructed similarly as for the continuous case: let $C, g_{+, i}, g_{0, j}$ be the same matrix and auxiliary functions as in~\eqref{def:C-no-H-0} and~\eqref{def:C}. Let $g_{+, NK}^{(i)}, g_{0, NK}^{(j)}$ be the Galerkin approximate solutions to $g_{+, i}$ and $g_{0, j}$ respectively. Let
\begin{equation}\label{eq:linearsysC-NK}
\begin{aligned}
  \eta_{NK} 
  &= \left(\eta_{+, NK}^{(1)},\cdots \eta_{+, NK}^{( \nu_+)},
  \eta_{0,NK}^{(1)}, \cdots,\eta_{0,NK}^{(\nu_0)}\right)^{\TT}
  = C^{-1 }(\VU_+(f_{NK}), \VU_0(f_{NK}))^{\TT} \Big|_{x=0} \,,
\\
  g_{NK} 
  = &\sum_{i=1}^{\nu_+} \eta^{(i)}_{+, NK} g_{+, NK}^{(i)}
    + \sum_{i=1}^{\nu_0} \eta^{(j)}_{0, NK} g_{0, NK}^{(j)}  \,,
\qquad
  \Phi_{NK} 
  = \sum_{i=1}^{\nu_+} \eta^{(i)}_{+, NK} X_{+, i}
    + \sum_{i=1}^{\nu_0} \eta^{(j)}_{0, NK} X_{0, j}  \,,
\end{aligned}
\end{equation}
Let $f_\phi$ be the solution to the undamped half-space equation~\eqref{eq:half-space-full}. Define its approximation $f_{\phi, NK}$ as
\begin{align} \label{soln:approx-NK}
   f_{\phi, NK} = f_{\phi} - g_{NK} + \Phi_{NK} \,,
\end{align}
which is an analog of the continuous version in~\eqref{soln:approx}. The following proposition shows the above approximation is almost quasi-optimal with a correction term.


\begin{prop} \label{prop:quasi-optimality-undamped} Let $f_\phi$ be the solution to the undamped half-space equation~\eqref{eq:half-space-full}. Suppose $f_{\phi,NK}$ is constructed as in~\eqref{soln:approx-NK}. Suppose $f_\phi$ is the unique solution to the equation
  \eqref{eq:half-space-2}.  Then there exists a constant $C_0$ such
  that
  \begin{equation*}
    \|f_\phi - f_{\phi, NK}\|_\Gamma 
    \leq 
    C_0 \left(\inf_{w \in \Gamma_N} \|f_\phi - w\|_{\Gamma} 
      + \inf_{w \in \Gamma_N} \| f - w\|_\Gamma
      + \delta_N \|f\|_{(L^2(a\dv\dx))^m} \right)\,, 
  \end{equation*}
  where $\norm{\cdot}_\Gamma$ is the norm defined in \eqref{def-Gamma}
  and
  \begin{equation*}
    \delta_N := \sum_{i=1}^{\nu_+} \inf_{w \in \Gamma_N} \|g_{+, i} - w\|_\Gamma
    + \sum_{j=1}^{\nu_0} \inf_{w \in \Gamma_N} \|g_{0, j} - w\|_\Gamma \,. 
  \end{equation*}
\end{prop}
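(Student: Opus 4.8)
The plan is to reduce the error bound for the undamped approximation $f_{\phi,NK}$ to the already-established quasi-optimality estimate for the damped equation (Proposition~\ref{prop:approximation-NK}(b)) plus a perturbation analysis of the finite-dimensional linear algebra defining $\eta_{NK}$. Writing $f_{\phi,NK} = f_\phi - g_{NK} + \Phi_{NK}$ and comparing with the exact decomposition $f_\phi = f - g + \Phi$ from Proposition~\ref{prop:recovery}, it is convenient to instead compare through the Galerkin solution $f_{NK}$ of the damped equation. The key observation is that $f_\phi - f_{NK}$ differs from $f_\phi - g_{NK} + \Phi_{NK}$ by exactly the correction terms $g_{NK} - \Phi_{NK}$, while $f_\phi - f = \Phi - g$; thus one writes
\begin{equation*}
   f_\phi - f_{\phi,NK} = (f - f_{NK}) + (g_{NK} - g) + (\Phi - \Phi_{NK}) \,,
\end{equation*}
and estimates the three groups separately. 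The first is controlled by $C_0 \inf_{w\in\Gamma_N}\|f - w\|_\Gamma$ directly from Proposition~\ref{prop:approximation-NK}(b).

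For the remaining two groups I would first estimate $|\eta - \eta_{NK}|$. Both coefficient vectors are obtained by applying $C^{-1}$ (the same fixed, nonsingular matrix, since $C$ is built from the exact auxiliary functions $g_{+,i}, g_{0,j}$) to $(\vec U_+, \vec U_0)$ evaluated on $f$ resp.\ $f_{NK}$ at $x=0$. Since the functionals $f \mapsto \langle (v_1+u)X_{\pm,k}, f\rangle_v\big|_{x=0}$ and $f \mapsto \langle(v_1+u)X_{0,k},f\rangle_v\big|_{x=0}$ are continuous on $\Gamma$ via the trace theorem stated after \eqref{def-Gamma}, we get $|\eta - \eta_{NK}| \lesssim \|f - f_{NK}\|_\Gamma \leq C_0\inf_{w\in\Gamma_N}\|f-w\|_\Gamma$. (One should note $\eta_{NK}$ uses $f_{NK}$ but $C$ is exact; if one prefers to use the consistent discrete matrix $C_{NK}$ built from $g_{+,NK}^{(i)}, g_{0,NK}^{(j)}$ one must also perturb $C^{-1}$, but the statement as written freezes $C$, so this is cleaner.) Then $\Phi - \Phi_{NK} = \sum (\eta_{+,i} - \eta^{(i)}_{+,NK}) X_{+,i} + \sum (\eta_{0,j}-\eta^{(j)}_{0,NK}) X_{0,j}$ has $\Gamma$-norm $\lesssim |\eta - \eta_{NK}| \lesssim C_0\inf_{w\in\Gamma_N}\|f-w\|_\Gamma$, since the $X$'s are fixed functions in $\Gamma$.

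The $g_{NK} - g$ term needs a triangle-inequality split:
\begin{equation*}
   g_{NK} - g = \Bigl(\sum_i \eta^{(i)}_{+,NK} g^{(i)}_{+,NK} - \sum_i \eta_{+,i} g_{+,i}\Bigr) + (\text{analogous $H^0$ terms})
\end{equation*}
and each summand is written as $\eta^{(i)}_{+,NK}(g^{(i)}_{+,NK} - g_{+,i}) + (\eta^{(i)}_{+,NK} - \eta_{+,i}) g_{+,i}$. The second piece is again bounded by $|\eta - \eta_{NK}|\,\|g_{+,i}\|_\Gamma \lesssim C_0\inf_{w\in\Gamma_N}\|f-w\|_\Gamma$. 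The first piece is bounded by $|\eta^{(i)}_{+,NK}|\,\|g^{(i)}_{+,NK}-g_{+,i}\|_\Gamma$; here Proposition~\ref{prop:approximation-NK}(b) applied to the auxiliary problems gives $\|g^{(i)}_{+,NK}-g_{+,i}\|_\Gamma \leq C_0\inf_{w\in\Gamma_N}\|g_{+,i}-w\|_\Gamma$, and $|\eta^{(i)}_{+,NK}|$ is bounded uniformly in $N$ by $\|C^{-1}\|$ times the trace bound on $\|f_{NK}\|_\Gamma$, which is itself bounded by $\|f\|_{(L^2(a\dv\dx))^m}$ up to constants (inf-sup stability of the damped Galerkin scheme). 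Collecting everything and absorbing the trace and $C^{-1}$ constants into $C_0$ yields the three advertised terms: $\inf_{w\in\Gamma_N}\|f_\phi - w\|_\Gamma$ does not literally appear in the above chain, so at the last step I would re-route the $(f - f_{NK})$ estimate — actually I expect the cleanest route bounds $\|f_\phi - f_{NK} + (\text{corrections})\|$ partly through $\inf_{w\in\Gamma_N}\|f_\phi - w\|_\Gamma$ by noting $f_\phi$ and $f$ differ only by the fixed-dimensional $\Phi - g$, so $\inf_{w\in\Gamma_N}\|f - w\|_\Gamma \leq \inf_{w\in\Gamma_N}\|f_\phi - w\|_\Gamma + \delta_N \|\eta\|$, and $\|\eta\| \lesssim \|f\|$; this converts freely between the two infima and produces the stated form with $\delta_N\|f\|$ as the genuinely new contribution.

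The main obstacle is bookkeeping the dependence on $N$ of $|\eta_{NK}|$ and showing it stays bounded: this rests on uniform-in-$N$ inf-sup stability of the damped Galerkin scheme (which gives $\|f_{NK}\|_\Gamma \lesssim \|l\|_{\Gamma'} \lesssim \|\phi\|_{L^2(a\Id_{v_1+u>0}\dv)} \lesssim \|f\|$), combined with continuity of the trace and boundary functionals — all available from the preceding sections but needing to be chained carefully. Everything else is triangle inequalities and the two quasi-optimality estimates applied to the main and auxiliary Galerkin problems.
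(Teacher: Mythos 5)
Your proposal is correct in outline and rests on the same two pillars as the paper (quasi-optimality of the damped Galerkin scheme from Proposition~\ref{prop:approximation-NK}(b) and a perturbation analysis of the recovery coefficients through the nonsingular matrix $C$), but it organizes the estimate differently. The paper does not expand $g-g_{NK}$ term by term; instead it groups $(f-g)$ against $(f_{NK}-g_{NK})$, observes that $f-g$ itself solves the damped equation so quasi-optimality applies to that pair in one stroke, and then uses $\Phi\in H^+\oplus H^0\subseteq\Gamma_{NK}$ to convert $\inf_w\|w-(f-g)\|_\Gamma$ into $\inf_w\|f_\phi-w\|_\Gamma$; the only remaining piece is $\Phi-\Phi_{NK}$, whose bound $\kappa_4\|f-f_{NK}\|_\Gamma+\tilde\kappa_4\|f\|\,\delta_{NK}$ carries the $\delta_N\|f\|$ contribution. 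Your route — splitting into $(f-f_{NK})+(g_{NK}-g)+(\Phi-\Phi_{NK})$, applying quasi-optimality separately to $f$ and to each auxiliary $g_{+,i},g_{0,j}$, and controlling $|\eta-\eta_{NK}|$ and $|\eta_{NK}|$ via the trace functionals, $C^{-1}$, and discrete inf-sup stability — yields the same bound (indeed a slightly stronger one, since your chain never needs the $\inf_w\|f_\phi-w\|_\Gamma$ term, which only enlarges the right-hand side, so your closing ``re-routing'' discussion is unnecessary). Your version is more explicit about where $\delta_N\|f\|$ originates and, incidentally, sidesteps a small imprecision in the paper: $f_{NK}-g_{NK}$ is the Galerkin solution for the boundary datum built from $\eta_{NK}$ rather than $\eta$, so the paper's direct appeal to quasi-optimality for the pair $(f-g,\,f_{NK}-g_{NK})$ needs exactly the coefficient-perturbation argument you supply. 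Two caveats, both shared with the paper rather than gaps of your own: the boundary functionals $\vec U_\pm,\vec U_0$ at $x=0$ involve the odd part of the trace, which the $\Gamma$-norm alone does not control (one must invoke the equation satisfied by $f$, $f_{NK}$, and the auxiliary solutions to give these traces meaning and continuity), and the definition \eqref{soln:approx-NK} should be read as $f_{\phi,NK}=f_{NK}-g_{NK}+\Phi_{NK}$, which is the reading your decomposition implicitly (and correctly) adopts.
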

\begin{proof}
Let $f$ be the solution the damped equation~\eqref{eq:half-space-damping-1} with incoming data $\phi$. Let $g, \Phi$ be defined as in~\eqref{def:g-Phi}. 
Then there exist constants $\kappa_4, \tilde\kappa_4>0$ such that\begin{equation*}
\begin{aligned}
      \|\Phi - \Phi_{NK}\|_\Gamma 
      &= \|\Phi - \Phi_{NK}\|_{(L^2(\dv))^m} 
\\
      & \leq \kappa_4 
                \| f - f_N\|_\Gamma
                       + \tilde\kappa_4 \|f\|_{(L^2(\dv\dx))^m} 
                       \left(\sum_{i=1}^{\nu_+} \|g_{+, i} - g_{+, NK}^{(i)}\|_\Gamma
                       + \sum_{j=1}^{\nu_0} \|g_{0, j} - g_{0,NK}^{(j)}|_\Gamma \right) \,,
\end{aligned}
\end{equation*}
Second, since $f - g$ is a solution to the damped equation, we have
\begin{equation*}
    \|(f - g) - (f_N - g_{NK})\|_\Gamma \leq \kappa_5 \inf_{w \in \Gamma_{NK}} \|w - (f -  g)\|_{\Gamma} \,, 
\end{equation*}
since $f_{NK}, g_{NK} \in \Gamma_{NK}$. Therefore,
\begin{equation*}
\begin{aligned}
  \|(f - g + \Phi) - (f_{NK} - g_{NK} +
  \Phi_{NK})\|_\Gamma 
& \leq \kappa_5 \inf_{w \in \Gamma_{NK}} \|w - (f -
  g)\|_{\Gamma} + \|\Phi - \Phi_{NK}\|_\Gamma
\\
& \hspace{-1cm}
  \leq \kappa_5 \inf_{w \in \Gamma_{NK}} \|w - (f - g + \tilde
  \Phi)\|_{\Gamma} + \kappa_4 \|f - f_{NK}\|_\Gamma
\\
& \hspace{-1cm}
   \leq \kappa_6 \left(\inf_{w \in \Gamma_{NK}} \|f_\phi - w\|_{\Gamma} + \inf_{w \in \Gamma_{NK}} \|f - w\|_\Gamma 
     + \delta_{NK} \|f\|_{(L^2(\dv\dx))^m}\right)
  \,,
\end{aligned}
\end{equation*}
where
\begin{equation} \nn
       \delta_{NK} = 
             \sum_{i=1}^{\nu_+} \inf_{w \in \Gamma_{NK}} \|g_{+, i} - w\|_\Gamma
        + \sum_{j=1}^{\nu_0} \inf_{w \in \Gamma_{NK}} \|g_{0, j} - w\|_\Gamma \,.
\end{equation}
Note that the second inequality holds because $\Phi \in H^+ \oplus H^0 \subseteq \Gamma_{NK}$.
\end{proof}

\begin{rmk}
Note that in the above reconstruction scheme, the solutions
$g_{+, NK}^{(i)}$ for $1 \leq i \leq \nu_+$ and $g_{0, NK}^{(j)}$ for $1 \leq j \leq
\nu_0$ can be precomputed, as they do not depend on the prescribed
incoming data $\phi$. In particular, we can use a higher order
approximation (larger $N, K$) for these functions.
\end{rmk}

\subsection{ODE formulation}
In this part we reformulate the variational form \eqref{eq:variational-finitedim-NK} into an ODE with explicit boundary conditions. This ODE will be the system that we solve in numerics; since this is a linear ODE, its solution can be directly obtained by solving the associated generalized eigenvalue problems. 
To illustrate the idea, we first treat the special case where there is a single species in 1D, that is, $m=K=1$.
\begin{prop} \label{prop:1D-ODE}
The variational form
\eqref{eq:variational-finitedim} is equivalent to the following ODE
for the coefficients $a_k(x)$ together with the boundary conditions at
$x=0$:
\begin{align}
  \label{ODE-1D}
  \sum_{k=1}^{2N+1} \mathsf{A}_{kl} \del_x a_k(x) &=
  \sum_{k=1}^{2N+1}\mathsf{B}_{kl} a_k(x) \,,
  \\
  \label{cond:boundary}
   \sum_{k=1}^{N+1} \La (v+u) \psi_{2k-1}, \, \psi_{2j} \Ra_v a_{2k-1}(0)
   + &\sum_{k=1}^{N} \La |v+u| \psi_{2k}, \psi_{2j}\Ra_v a_{2k}(0)
   = 2 \int_{v+u > 0} (v_1+u) \, \phi \, \psi_{2j} \dv  \,,
\end{align}   
where $1 \leq j \leq N$ and
\begin{equation} \label{def:A-B}
  \mathsf{A}_{kl}  = \La (v+u) \psi_k, \,\, \psi_l \Ra_v \,,
  \qquad
  \mathsf{B}_{kl} = -\La \psi_k, \,\, \mc{L}_d \psi_l \Ra_v \,, 
  \qquad
  1 \leq i,j \leq 2N+1 \,.
\end{equation}
\end{prop}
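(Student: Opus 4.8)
The plan is to prove the equivalence by writing the finite-dimensional identity $\CalB(f_N,g)=l(g)$ out in coordinates and exploiting the parity of the basis. Decompose $f_N$ and any $g\in\Gamma_N$ into their shifted even and odd parts, so that $f_N^-=\sum_k a_{2k-1}\psi_{2k-1}$, $f_N^+=\sum_k a_{2k}\psi_{2k}$, and likewise for $g$; since $\CalB$ and $l$ are linear in the test slot, it suffices to test against the generators $g(x,v)=\psi_l(v)\varphi(x)$ for $l=1,\dots,2N+1$ with $\varphi$ ranging over $H^1(0,\infty)$, and then collect. Throughout we use that $(v+u)$ is odd about $-u$, hence $\La(v+u)\psi_k,\psi_l\Ra_v=0$ whenever $\psi_k,\psi_l$ share parity; this is exactly what makes $\mathsf{A}_{kl}=\La(v+u)\psi_k,\psi_l\Ra_v$ a checkerboard matrix and lets each equation be written as a single sum over all $k$. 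We also use the regularity $a_k,g_k\in C^1[0,\infty)\cap H^1(0,\infty)$ from Proposition~\ref{prop:approximation-NK} (specialized in Corollary~\ref{prop:approximation}), so that $a_k(x),g_k(x)\to0$ as $x\to\infty$ and no contribution survives at infinity when we integrate by parts in $x$. Recall that $\CalL_d$ already contains all the damping terms, so $\mathsf{B}_{kl}=-\La\psi_k,\CalL_d\psi_l\Ra_v$ absorbs them.

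\emph{From the variational identity to \eqref{ODE-1D}--\eqref{cond:boundary}.} Take $g=\psi_l\varphi$ with $\varphi\in C_c^\infty(0,\infty)$ first. If $l=2j-1$ (so $g^+=0$), the only surviving terms of $\CalB$ are $\La(v+u)\del_x f_N^+,g^-\Ra_{x,v}$ and $\La g,\CalL_d f_N\Ra_{x,v}$, and $l(g)=0$; since no $x$-derivative falls on $\varphi$, reading off the $v$-integrals and using that $\varphi$ is arbitrary yields the $l$-th ODE pointwise, valid up to $x=0$. If $l=2j$ (so $g^-=0$), the term $-\La f_N^-,(v+u)\del_x g^+\Ra_{x,v}=-\int_0^\infty\La(v+u)f_N^-,\psi_{2j}\Ra_v\,\varphi'\dx$ must be integrated by parts in $x$; for interior $\varphi$ this again produces the $l$-th ODE. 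Now allow $\varphi(0)\neq0$: after subtracting the interior part already accounted for by the ODE, the surviving $x=0$ terms are $\La(v+u)f_N^-,\psi_{2j}\Ra_v|_{x=0}\,\varphi(0)$ (from the integration by parts) and $\La|v+u|f_N^+,\psi_{2j}\Ra_v|_{x=0}\,\varphi(0)$, whose sum must equal $l(g)=2\varphi(0)\int_{v+u>0}(v+u)\phi\psi_{2j}\dv$; expanding $f_N^\pm|_{x=0}$ in the basis gives precisely \eqref{cond:boundary}. Note the boundary conditions arise only from the even indices $l=2j$, $j=1,\dots,N$, whereas the odd indices give the ODE valid up to $x=0$ with no extra constraint — the asymmetry built into the mixed even-odd space $\Gamma$.

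\emph{From \eqref{ODE-1D}--\eqref{cond:boundary} back to the variational identity.} Conversely, given coefficients $\{a_k\}$ in the same regularity class solving the ODE and the boundary conditions, take $g=\sum_l g_l(x)\psi_l(v)\in\Gamma_N$, multiply the $l$-th ODE by $g_l$, sum over $l$, and integrate over $(0,\infty)$. For $l$ even, integrate by parts in $x$ to move $\del_x$ off the $a_k$'s and onto $g_l$, generating a boundary term at $x=0$ equal to $-\La(v+u)f_N^-,g^+\Ra_v|_{x=0}$; for $l$ odd no integration by parts is needed. Reassembling the $v$-integrals recovers $\CalB(f_N,g)$ up to exactly the $x=0$ terms $-\La(v+u)f_N^-,g^+\Ra_v|_{x=0}$ and $-\La|v+u|f_N^+,g^+\Ra_v|_{x=0}$; contracting \eqref{cond:boundary} against the basis coefficients of $g^+|_{x=0}$ converts these into $l(g)$, establishing $\CalB(f_N,g)=l(g)$ for every $g\in\Gamma_N$.

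\emph{Expected main obstacle.} There is no analytic difficulty here; the work is careful bookkeeping. The delicate point is combining the boundary term produced by the $x$-integration by parts of the antisymmetric transport term with the explicit flux term $\La|v+u|f_N^+,g^+\Ra_{x=0}$ into the half-range integral $2\int_{v+u>0}(v+u)(\cdot)\dv$ that matches $l$; this uses $|v+u|=\pm(v+u)$ on $\{v+u\gtrless0\}$ together with the parities of $f_N^\pm$ and $\psi_{2j}$. One must also track all signs through the integration by parts, verify that exactly $N$ boundary conditions emerge (consistent with the unknowns, since $\mathsf{A}$ is singular and the remaining conditions come from membership in $\Gamma_N\subset L^2$ at $x=\infty$), and check that the coefficient regularity is enough both to interpret the weak ODE pointwise and to discard the $x=\infty$ boundary terms.
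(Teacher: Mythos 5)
Your proposal is correct and takes essentially the same route as the paper: the paper also tests against separated functions $g(x)\psi_l(v)$, first with $g\in C_c^\infty$ to extract the interior ODE pointwise and then with $g(0)\neq 0$ so that the $x=0$ terms produced by integration by parts in $x$ combine with the flux term $\La |v+u| f_N^+, \psi_{2j}\Ra_v\big|_{x=0}$ to give exactly \eqref{cond:boundary} for the even test indices. The only addition on your side is that you spell out the converse direction (ODE plus boundary conditions implies $\CalB(f_N,g)=l(g)$ for all $g\in\Gamma_N$), which the paper's stated ``equivalence'' leaves implicit; this is the same computation run backwards rather than a genuinely different argument.
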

\begin{proof}
In order to show that the boundary conditions for the solution to \eqref{eq:variational-finitedim} are given by \eqref{cond:boundary}, we first choose test functions $G_{2j}(x, v) = g(x)\psi_{2j}(v)$ where $g(x) \in C_c^\infty([0, \infty))$ and $1 \leq j \leq N$. Applying $G_j$ in \eqref{eq:variational-finitedim}, we get 
\begin{equation}
  \label{eq:f-N-1}
\begin{aligned}  
       - \La f_N^{-}, \,\, (v_1+u) \psi_{2j}(v) \del_x g(x)  \Ra_{x,v} + \La (\CalL\psi_{2j}) g(x),  \,\, f_N\Ra_{x,v}
    = 0 \,,
\end{aligned}
\end{equation}
where $f_N$ is defined in \eqref{def:f-N} and $f_N = f_N^{-} + f_N^{+}$ with
\begin{equation*} 
      f_N^{-} = \sum_{k=1}^{N+1} a_{2k-1} (x) \psi_{2k-1} \,,
\qquad
      f_N^{+} = \sum_{k=1}^N a_{2k} (x) \psi_{2k} \,. 
\end{equation*}
By integration by parts in \eqref{eq:f-N-1} we obtain
\begin{equation*}
\begin{aligned}  
       \La \sum_{k=1}^{N+1} \psi_{2k-1}\del_x a_{2k-1} (x) , \,\, (v_1+u) \psi_{2j}(v) g(x)  \Ra_{x,v} + \La (\CalL\psi_{2j}) g(x),  \,\, f_N\Ra_{x,v}
    = 0 \,.
\end{aligned}
\end{equation*}
Since $g \in C_c^\infty([0, \infty))$ is arbitrary, we have
\begin{equation}
  \label{eq:f-N-2}
\begin{aligned}  
       \sum_{k=1}^{N+1} \La \psi_{2k-1} , \,\, (v_1+u) \psi_{2j}(v)  \Ra_{v} \del_x a_{2k-1} (x) + \La (\CalL\psi_{2j}),  \,\, f_N\Ra_{v}
    = 0 \,,
\end{aligned}
\end{equation}
for each $1 \leq j \leq N$ and $x \in [0, \infty)$. Note we choose $\tilde G_{2j} = \tilde g (x) \psi_{2j}(x)$ where $\tilde g \in C^\infty([0, \infty))$. Then equation \eqref{eq:variational-finitedim} becomes 
\begin{equation}
  \label{eq:f-N-3}
\begin{aligned}  
       - \La f_N^{-}, \,\, (v+u) \psi_{2j}(v) \del_x g(x)  \Ra_{x,v} + \La (\CalL\psi_{2j}) g(x),  \,\, f_N\Ra_{x,v}
     + \La (v_1+u) f_N^{+}, \,\, \psi_{2j} \tilde g(0)\Ra_{x=0} 
\\
     = 2 \int_{v_1+u > 0} (v_1+u) \phi(v) \psi_{2j}(v) g(0) \dv  \,,
\end{aligned}
\end{equation}
for each $1 \leq j \leq N$. 
The set of $N$ boundary conditions \eqref{cond:boundary} then follows from integrating by parts in \eqref{eq:f-N-3} and applying \eqref{eq:f-N-2}. 
\end{proof}

The general case follows from the similar idea and we only sketch its proof.
\begin{prop}
Let 
\begin{equation*}
      \mathsf{A} = \left(\La (v_1 + u) \psi_k^{(1)}, \,\, \psi_j^{(1)}\Ra_{v_1} \right)_{(2N+1)\times (2N+1)} \,.
\end{equation*}
Define two 8-tensors $\mathfrak{A}$ and $\mathfrak{B}$ as
\begin{equation}\label{def:D-B}
\begin{aligned}
      \mathfrak{A} &= \mathsf{A} \otimes \ii \otimes \ii \otimes \ii = \big(\mathsf{A}_{ik} \delta_{lj} \delta_{ns} \delta_{pq}\big)_{(2N+1)^2 \times K^2 \times K^2 \times m^2}\,,
\\
      \mathfrak{B}_{klnp}^{ijsq}
      &= - \La \psi_k^{(1)}(v_1)\psi_l^{(2)}(v_2)\psi_n^{(3)}(v_3) \, \Be_p,  \,\,
         \CalL_d \left(\psi_i^{(1)}(v_1)\psi_j^{(2)}(v_2)\psi_s^{(3)}(v_3) \, \Be_q \right)\Ra_v 
\end{aligned}
\end{equation}
for $1 \leq i,k \leq 2N+1$, $1 \leq j, l \leq K$, $1 \leq s,n \leq K$, and $1 \leq p, q \leq m$.
Then the variational form \eqref{eq:variational-finitedim-NK} is equivalent to the  following ODE for the coefficients $a_{kln}^{(p)}(x)$:
\begin{equation} \label{ODE-general}
        \sum_{p=1}^m\sum_{l,n=1}^K \sum_{k=1}^{2N+1}\mathfrak{A}_{klnp}^{ijsq} \del_x a_{kln}^{(p)}(x) 
        = \sum_{p=1}^m \sum_{l,n=1}^K \sum_{k=1}^{2N+1}\mathfrak{B}_{klnp}^{ijsq} a_{kln}^{(p)}(x),
\end{equation}
together with the boundary conditions at $x=0$:
\begin{equation}\label{Boundary-general}
\begin{aligned}
	\sum_{k=1}^{N+1} \La (v_1+u) \psi_{2k-1}^{(1)}, \, \psi_{2i}^{(1)} \Ra_{v_1} a_{2k-1,jl}^{(q)}(0)
   &+ \sum_{k=1}^{N} \La |v_1+u| \psi_{2k}^{(1)}, \psi_{2i}^{(1)}\Ra_{v_1} a_{2k,jl}^{(q)}(0)
\\
  &= 2 \int_{v_1+u > 0} (v_1+u) \, \phi \cdot \, \psi_{2i}^{(1)}(v_1) \psi_{j}^{(2)}(v_2)\psi_{k}^{(3)}(v_3) \Be_q\dv  
\end{aligned}
\end{equation}
for $i = 1, \cdots, N$, $j,l = 1, 2, \cdots, K$, and $q = 1, \cdots, m$. 

\end{prop}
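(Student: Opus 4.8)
The plan is to transcribe, essentially verbatim, the argument of Proposition~\ref{prop:1D-ODE}, the only genuinely new work being the bookkeeping of the tensor‑product structure in $(v_1,v_2,v_3)$ and the $m$ species components. I would prove equivalence in both directions: a coefficient family $\{a_{kln}^{(p)}\}$ with $a_{kln}^{(p)}\in C^1[0,\infty)\cap H^1(0,\infty)$ solving \eqref{ODE-general} together with the boundary conditions \eqref{Boundary-general} produces, via \eqref{def:f-NK}, an element of $\Gamma_{NK}$ satisfying \eqref{eq:variational-finitedim-NK}; and conversely the Galerkin solution $f_{NK}$ yields coefficients solving the ODE boundary‑value problem. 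The forward direction (ODE $\Rightarrow$ variational) is obtained by reversing the integrations by parts below, so I will concentrate on the other direction.

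First I would test \eqref{eq:variational-finitedim-NK} against the separable functions $G(x,v)=g(x)\,\psi_i^{(1)}(v_1)\psi_j^{(2)}(v_2)\psi_s^{(3)}(v_3)\,\Be_q$, which span $\Gamma_{NK}$ as $g$ runs over $H^1(\dx)$ and the indices over their ranges. I split on the parity of $\psi_i^{(1)}$ in $v_1$ with respect to $-u$: if $i=2i'-1$ is odd then $G^+=0$, so both the $x=0$ boundary term in $\CalB$ and the functional $l(G)$ vanish; if $i=2i'$ is even then $G^-=0$. Taking $g\in C_c^\infty((0,\infty))$ in either case kills every boundary contribution, and one integration by parts in $x$ (legitimate since $a_{kln}^{(p)}\in H^1$) reduces the identity, for a.e.\ $x$, to a relation between $\del_x a_{kln}^{(p)}(x)$ and $a_{kln}^{(p)}(x)$. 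The $v$‑integrals then collapse: orthonormality of $\{\psi_l^{(2)}\}$, of $\{\psi_n^{(3)}\}$ and $\Be_p\cdot\Be_q=\delta_{pq}$ supply the Kronecker factors in $\mathfrak{A}$, while the term $\La G,\CalL_d f_{NK}\Ra_{v}$ — which contains all four damping sums of \eqref{def-B}, these being by construction part of $\CalL_d$ — contributes exactly $\mathfrak{B}_{klnp}^{ijsq}$ after using that $\CalL_d$ is self‑adjoint and acts only on $v$. Letting $i$ range over both odd and even indices reproduces the full system \eqref{ODE-general}.

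Next, to extract \eqref{Boundary-general} I would re‑test with $G=g(x)\psi_{2i}^{(1)}(v_1)\psi_j^{(2)}(v_2)\psi_s^{(3)}(v_3)\Be_q$, now with $g\in C^\infty([0,\infty))$ and $g(0)\neq 0$; only even‑in‑$v_1$ test functions are relevant, since the odd ones produced no $x=0$ term. Integrating the term $-\La f_{NK}^-,(v_1+u)\del_x G^+\Ra_{x,v}$ by parts picks up the endpoint value $g(0)\,\La (v_1+u)f_{NK}^-,\psi_{2i}^{(1)}\psi_j^{(2)}\psi_s^{(3)}\Be_q\Ra_v\big|_{x=0}$; combining this with the genuine boundary term $\La|v_1+u|f_{NK}^+,\psi_{2i}^{(1)}\psi_j^{(2)}\psi_s^{(3)}\Be_q\Ra_v\big|_{x=0}$, with $l(G)=2g(0)\int_{v_1+u>0}(v_1+u)\,\phi\cdot\psi_{2i}^{(1)}\psi_j^{(2)}\psi_s^{(3)}\Be_q\dv$, and subtracting the interior ODE already established, I obtain the stated boundary identity; the $v_2,v_3$ integrations again collapse by orthonormality, leaving only the $v_1$‑pairings $\La(v_1+u)\psi_{2k-1}^{(1)},\psi_{2i}^{(1)}\Ra_{v_1}$ and $\La|v_1+u|\psi_{2k}^{(1)},\psi_{2i}^{(1)}\Ra_{v_1}$. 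A count of constraints ($N$ even indices in $v_1$, times $K^2$, times $m$) shows they match the number of undetermined ``incoming'' coefficients $\{a_{2k-1,jl}^{(q)}(0)\}$, so the resulting ODE boundary‑value problem is well‑posed, exactly as in the one‑dimensional single‑species case.

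The main obstacle is purely organizational: keeping the eight‑index tensor bookkeeping and the parity case‑split transparent, and in particular checking cleanly that the four damping sums in \eqref{def-B} are precisely the contribution of the extra terms of $\CalL_d$ to $\mathfrak{B}$, so that $\mathfrak{B}_{klnp}^{ijsq}=-\La \psi_k^{(1)}\psi_l^{(2)}\psi_n^{(3)}\Be_p,\ \CalL_d(\psi_i^{(1)}\psi_j^{(2)}\psi_s^{(3)}\Be_q)\Ra_v$ as stated. Everything else is a routine transcription of Proposition~\ref{prop:1D-ODE}.
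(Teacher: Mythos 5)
Your proposal is correct and follows essentially the same route as the paper: the paper's proof is exactly a (much terser) version of this argument, namely testing \eqref{eq:variational-finitedim-NK} against separable basis test functions $g(x)\psi_i^{(1)}\psi_j^{(2)}\psi_s^{(3)}\Be_q$ to get \eqref{ODE-general} and against the even-in-$v_1$ ones with $g(0)\neq 0$ to get \eqref{Boundary-general}, exactly as in Proposition~\ref{prop:1D-ODE}. Your additional care with the parity split, the orthonormality collapse of the $v_2,v_3$ and species indices, and the self-adjointness of $\CalL_d$ fills in details the paper leaves implicit but does not change the method.
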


\begin{proof}
Equation~\eqref{ODE-general} is obtained by choosing the test function $g$ in~\eqref{eq:variational-finitedim-NK} as the basis functions such that the velocity part is $\psi_i^{(1)}(v_1)\psi_j^{(2)}(v_2)\psi_l^{(3)}(v_3) \, \Be_q$. The boundary condition~\eqref{Boundary-general} is derived by choosing the test functions as $\psi_{2i}^{(1)}(v_1)\psi_j^{(2)}(v_2)\psi_l^{(3)}(v_3) \, \Be_q$. 
\end{proof}


Numerically, the approximate solutions $f_{NK}$ in~\eqref{ODE-general} (or $f_N$~\eqref{ODE-1D} in 1D) will be solved using the method of generalized eigenvalues. In particular, we define the generalized eigenvalues
  and its associated eigen-tensor for $(\mathfrak{A}, \mathfrak{B})$ as $\lambda\in\R$ and $\eta =
  (\eta_{kln}^{(p)})_{(2N+1)\times K\times K \times m}$ such that
\begin{equation}\label{BD-eigenvalue}
      \mathfrak{A}\eta 
      = \sum_{p=1}^m \sum_{l,n=1}^K \sum_{k=1}^{2N+1} \mathfrak{A}_{klnp}^{ijsq}\,\eta_{kln}^{(p)} 
      = \lambda  \sum_{p=1}^m \sum_{l,n=1}^K \sum_{k=1}^{2N+1} \mathfrak{B}_{klnp}^{ijsq}\,\eta_{kln}^{(p)}
\end{equation}
for all $1 \leq i \leq 2N+1$, $1\leq j, s \leq K$, and $1 \leq q \leq m$. When reduced to 1D system, the generalized eigenvalue problem for $(\mathsf{A}, \mathsf{B})$ becomes
\begin{equation}\label{ref:geneigen}
\mathsf{A}\,\eta = \lambda \mathsf{B}\,\eta \,.
\end{equation}
To solve for the coefficient $a(x)$, we take \eqref{ODE-1D} as an example. Define $\gamma(x) = \eta^{\TT} \mathsf{B} \,a(x)$ and multiply \eqref{ODE-1D} by $\eta^T$ from the left. We then obtain the equation for $\gamma$ as
\begin{equation*} 
  \eta^{\TT}\mathsf{A}\,\partial_x a(x) = \eta^{\TT}\mathsf{B}\,a(x) \quad\Rightarrow\quad
  \lambda \partial_x\gamma(x) = 
  \gamma(x)  \,.
\end{equation*}
If $\lambda= 0$, then we immediately get the constraint 
\begin{equation}\label{constraint:lambda-0}
  \gamma(x) =
\eta^{\TT}\mathsf{B}\, a = 0 \,.
\end{equation}  
If $\lambda \neq 0$, then we
have
\begin{equation}
 \gamma(x) = e^{x / \lambda}\gamma(0) \,. \nn
\end{equation}
Depending on the signs of the eigenvalues, $\gamma$ either 
grows exponentially to infinity or decays exponentially to zero; as we look for bounded decaying solutions, this gives us 
constraints to $\gamma(0)$ for the growing modes: If $\lambda > 0$, then we have the constraints 
\begin{equation}\label{eqn:constraint_PosEigen}
  \gamma = \eta^{\TT} \mathsf{B}\, a  = 0.
\end{equation}
Note that we do not need constraints for modes with negative eigenvalues. The total number of constraints in the form of \eqref{eqn:constraint_PosEigen} is determined by the number of positive generalized eigenvalues. The following Proposition gives the signature of $(\mathsf{A}, \mathsf{B})$:

\begin{prop}\label{prop:eigenvalue}
Let $\mathfrak{A}, \mathfrak{B}$ be the 8-tensors defined in \eqref{def:D-B} with any arbitrary $u \in \R$ and $N, K \geq 1$. Then 

\Ni (a) there are $mNK^2$ positive generalized eigenvalues, $mNK^2$ negative eigenvalues, and $mK^2$ zero eigenvalue for the pair $(\mathfrak{A}, \mathfrak{B})$. 

\Ni (b) In the special case where $m = K = 1$ and $\mathsf{A}, \mathsf{B}$ be the matrices defined in \eqref{def:A-B} with any arbitrary $u \in \R$ and $N \geq 1$, there are $N$ positive generalized eigenvalues, $N$ negative eigenvalues, and one zero eigenvalue for the pair $(\mathsf{A}, \mathsf{B})$
\end{prop}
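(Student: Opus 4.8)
The plan is to reduce both parts to a single inertia count for the small symmetric matrix $\mathsf{A}$ and then lift it through the tensor product. First I would record that $\mathfrak{B}$ is symmetric and negative definite: for any tensor $\eta=(\eta_{kln}^{(p)})$ set $g_\eta=\sum_{k,l,n,p}\eta_{kln}^{(p)}\psi_k^{(1)}\psi_l^{(2)}\psi_n^{(3)}\Be_p$, so that $\eta^{\TT}\mathfrak{B}\eta=-\viint{g_\eta}{\CalL_d g_\eta}$, which is strictly negative for $\eta\neq 0$ by the coercivity in Lemma~\ref{lem:prop-L-d} (recall $\alpha$ is chosen small), while symmetry of $\mathfrak{B}$ and of $\mathsf{A}$ is immediate from self-adjointness of $\CalL_d$ and the scalar multiplier $(v_1+u)$. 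For a pencil $(\mathfrak{A},\mathfrak{B})$ with $\mathfrak{B}=-CC^{\TT}$ invertible, the substitution $\zeta=C^{\TT}\eta$ turns $\mathfrak{A}\eta=\lambda\mathfrak{B}\eta$ into $C^{-1}\mathfrak{A}C^{-\TT}\zeta=-\lambda\zeta$; the symmetric matrix $S:=C^{-1}\mathfrak{A}C^{-\TT}$ is congruent to $\mathfrak{A}$, so by Sylvester's law of inertia the number of positive (resp.\ negative, zero) generalized eigenvalues of $(\mathfrak{A},\mathfrak{B})$ equals the number of negative (resp.\ positive, zero) eigenvalues of $\mathfrak{A}$. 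Since $\mathfrak{A}=\mathsf{A}\otimes\ii\otimes\ii\otimes\ii$ with the three identity blocks of sizes $K,K,m$, each eigenvalue of $\mathsf{A}$ is repeated $mK^2$ times in $\mathfrak{A}$; hence it suffices to prove the key fact $(\star)$: the $(2N+1)\times(2N+1)$ matrix $\mathsf{A}$ has exactly $N$ positive, $N$ negative and one zero eigenvalue. Granting $(\star)$, part (a) follows, and part (b) is the $m=K=1$ instance of the same chain of reasoning.

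To prove $(\star)$ I would exploit parity. Multiplication by $(v_1+u)$ sends functions that are even about $v_1=-u$ to odd ones and vice versa, so $\La(v_1+u)\psi_k^{(1)},\psi_j^{(1)}\Ra_{v_1}=0$ whenever $\psi_k^{(1)}$ and $\psi_j^{(1)}$ have the same parity. Ordering the basis with the $N+1$ odd functions $\psi_1^{(1)},\psi_3^{(1)},\dots,\psi_{2N+1}^{(1)}$ first and the $N$ even functions $\psi_2^{(1)},\dots,\psi_{2N}^{(1)}$ second, $\mathsf{A}$ takes the anti-block-diagonal form $\begin{pmatrix}0&\mathsf{A}_{oe}\\ \mathsf{A}_{oe}^{\TT}&0\end{pmatrix}$ with $\mathsf{A}_{oe}$ of size $(N+1)\times N$ (in fact quasi-triangular, by the second bullet hypothesis). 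Such a matrix has rank $2\,\rank\mathsf{A}_{oe}$, and its spectrum is symmetric about $0$: if $(x,y)^{\TT}$ is an eigenvector with eigenvalue $\mu$, then $(x,-y)^{\TT}$ is one with eigenvalue $-\mu$, and these are linearly independent when $\mu\neq 0$. Thus once we show $\mathsf{A}_{oe}$ has full column rank $N$, we get $\rank\mathsf{A}=2N$, the $2N$ nonzero eigenvalues split into $N$ positive and $N$ negative, and the kernel of $\mathsf{A}$ is one-dimensional, which is $(\star)$.

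It remains to show $\mathsf{A}_{oe}$ is injective. Suppose $\mathsf{A}_{oe}y=0$ for $y\in\R^N$ and put $h=\sum_{n=1}^N y_n\psi_{2n}^{(1)}$. By the second bullet hypothesis on the basis, $(v_1+u)\psi_{2n}^{(1)}\in\Span\{\psi_1^{(1)},\dots,\psi_{2n+1}^{(1)}\}$, and since $(v_1+u)\psi_{2n}^{(1)}$ is odd about $v_1=-u$ only the odd-indexed terms contribute, so $(v_1+u)h\in\Span\{\psi_1^{(1)},\psi_3^{(1)},\dots,\psi_{2N+1}^{(1)}\}$. The equations $\mathsf{A}_{oe}y=0$ say exactly that $(v_1+u)h$ is orthogonal to each $\psi_{2k-1}^{(1)}$, $k=1,\dots,N+1$, i.e.\ orthogonal to a spanning set of the space it lives in; hence $(v_1+u)h=0$ in $L^2(\!\dv_1)$. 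Because $v_1+u$ vanishes only on the null set $\{v_1=-u\}$, multiplication by $v_1+u$ is injective on $L^2(\!\dv_1)$, so $h=0$, and linear independence of the $\psi_j^{(1)}$ gives $y=0$. I expect this injectivity step — identifying the precise one-dimensional kernel of $\mathsf{A}$ through the span condition on $(v_1+u)\psi_{2n}^{(1)}$ — to be the only delicate point; the pencil reduction of the first paragraph and the $\pm$-symmetry of anti-block-diagonal matrices are routine, and the passage from $\mathsf{A}$ to $\mathfrak{A}$ is just multiplicativity of inertia under tensoring with identities.
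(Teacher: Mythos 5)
Your proof is correct and follows the same overall route as the paper: use the definiteness of $\mathsf{B}$ (resp.\ $\mathfrak{B}$) together with Sylvester's law of inertia to reduce the generalized eigenvalue count to the inertia of $\mathsf{A}$, exploit parity about $v_1=-u$ to put $\mathsf{A}$ into anti-block-diagonal form with spectrum symmetric about zero, and lift to $\mathfrak{A}=\mathsf{A}\otimes \ii\otimes \ii\otimes \ii$ with each eigenvalue repeated $mK^2$ times. Two points where you genuinely go beyond, or differ from, the paper's write-up, both to your credit: (i) the paper merely asserts that the off-diagonal block ($A_1$ there, your $\mathsf{A}_{oe}^{\TT}$) has full rank $N$, whereas you prove injectivity of $\mathsf{A}_{oe}$ from the hypothesis $(v_1+u)\psi_{2n}^{(1)}\in\Span\{\psi_1^{(1)},\dots,\psi_{2n+1}^{(1)}\}$ together with the injectivity of multiplication by $v_1+u$ on $L^2$; this argument also works for arbitrary $u$ directly, while the paper first reduces to $u=0$ by a change of variables tied to the specific half-Hermite basis; (ii) with the stated definition $\mathsf{B}_{kl}=-\La \psi_k,\CalL_d\psi_l\Ra_v$ the matrix is negative definite (the paper calls it positive definite), and you correctly track the resulting sign flip in the correspondence between generalized eigenvalues of the pencil and eigenvalues of $\mathsf{A}$; since the inertia of $\mathsf{A}$ is symmetric ($N$, $N$, $1$), the final count is unaffected, but your bookkeeping is the consistent one.
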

\begin{proof}
We first verify that in the 1D case, $(\mathsf{A}, \mathsf{B})$ has $N$ positive, $N$ negative, and one zero generalized eigenvalues. By the definition of $\mathsf{B}$ and the strict coercivity of $\CalL_d$, the matrix $\mathsf{B}$ is symmetric and strictly positive definite. Hence the numbers of positive, negative, and zero generalized eigenvalues are the same with the signature of the matrix $\mathsf{B}^{-1} \mathsf{A}$. Furthermore, by the Sylvestre's Law of Inertia, $\mathsf{B}^{-1} \mathsf{A}$ and $\mathsf{A}$ have the same signature. Hence, we only need to count the numbers of positive, negative, and zero eigenvalues of $\mathsf{A}$. Note that by the definition of the basis functions $\psi_k$ in \eqref{def:basis-function}, $\mathsf{A}$ is independent of $u$ since one can perform a change of variable $v+u \to v$ in each entry in $\mathsf{A}$. Thus we only need to study the matrix $\mathsf{A}_0$ with $u=0$. Change the order of the basis functions such that 
\begin{equation*} 
  (\tilde\psi_1, \tilde\psi_2, \cdots, \tilde\psi_{N+1}, \tilde\psi_{N+2}, \cdots \tilde\psi_{2N+1})
  = (\psi_1, \psi_3, \cdots, \psi_{2N+1}, \psi_2, \cdots, \psi_{2N})
  = P (\psi_1, \psi_2, \cdots, \psi_{2N+1}) \,,
\end{equation*}
where $P$ is the similarity matrix. Defined $\tilde{\mathsf{A}}_0 = P \mathsf{A}_0 P^{-1}$. Then $\tilde{\mathsf{A}}_0$ and $\mathsf{A}_0$ have the same signature. By the even/odd properties of $\tilde \psi_i$, the matrix $\tilde{\mathsf{A}}_0$
has the form
\begin{equation*} 
      \tilde{\mathsf{A}}_0
      = \begin{pmatrix}
             0 & A_1 \\[2pt]
             A_1^{\TT} & 0 
         \end{pmatrix} \,,
\end{equation*}
where $A_1 = \left(\int_\R v \psi_{2i} \psi_{2j+1}\right)_{N \times (N+1)}$. Suppose $\eta = (\eta_{1,1}, \cdots, \eta_{1, N}, \eta_{2,1}, \cdots, \eta_{2, N+1})^T= (\eta_1^{\TT}, \eta_2^{\TT})^{\TT}$ is an eigenvector of $\tilde{\mathsf{A}}_0$ with eigenvalue $\lambda$. Then one has
\begin{equation*}
      A_1 \eta_2 = \lambda \eta_1 \,, \qquad A_1^{\TT} \eta_2 = \lambda \eta_1 \,. 
\end{equation*}
It is clear that $(\eta_1, -\eta_2)$ is also an eigenvector of $\tilde{\mathsf{A}}_0$ and the associated eigenvalue is $-\lambda$. This shows the eigenvalues of $\tilde{\mathsf{A}}_0$ appear in pairs. Since $A_1$ has a full rank $N$, we have that $\rank \tilde{\mathsf{A}}_0 = 2N$. Therefore $\tilde{\mathsf{A}}_0$, thus $\mathsf{A}_0$  and $\mathsf{A}$,  has $N$ positive eigenvalues, $N$ negative eigenvalues, and one zero eigenvalue. 

Now we claim that each generalized eigenpair $(\lambda, v)$ of $\mathsf{A}$ gives rise to $mK^2$ eigenpairs of $\mathfrak{A}$. Indeed, let $\{w^{(l)}\}_{l=1}^K$ be a set of basis vectors of $\R^K$. Choose the 4-tensor $\eta^{(ln)}_i = v \otimes w^{(l)} \otimes w^{(n)} \otimes \Be_i$. Then
\begin{equation*}
     \mathfrak{A} \eta^{(ln)}_i 
     = (\mathsf{A} \otimes \ii \otimes \ii \otimes \ii) (v \otimes w^{(l)} \otimes w^{(n)} \otimes \Be_i) 
     = (\mathsf{A} v) \otimes w^{(l)} \otimes w^{(n)} \otimes \Be_i
     = \lambda \eta^{(ln)}_i
     \,,
\end{equation*}
for any $1 \leq l, n \leq K$. Thus each $(\lambda, v \otimes w^{l} \otimes w^{(n)} \otimes \Be_i)$ is an eigenpair of $\mathfrak{A}$.

Note that we can also view $\mathfrak{A}$ and $\mathfrak{B}$ as two matrices of size $(m(2N+1)K^2) \times (m(2N+1)K^2)$ by defining a bijection between the indices 
\begin{equation*}
    \Upsilon: \{(i,j,l,p) | \, i = 1, \cdots, 2N+1,\, j, l = 1, \cdots, K, \, p = 1, \cdots, m\}\to \{1, \cdots, m(2N+1)K^2\} \,.
\end{equation*}
Then $\mathfrak{B}$ is symmetric and positive definite and $\mathfrak{A}$ is symmetric. Therefore, by a similar argument as for $(\mathsf{A}, \mathsf{B})$ using Sylvestre's Law of Inertia, the number of positive, negative, and zero generalized eigenvalues agree with those of $\mathfrak{A}$. This shows there are $mNK^2$ positive, $mNK^2$ negative, and $mK^2$ zero generalized eigenvalues for $(\mathfrak{A}, \mathfrak{B})$. \end{proof}



By Proposition~\ref{prop:eigenvalue}, we outline the specific steps that we take in our numerical computation: in total we have $N+1$ equations for $a(0)$ given by the constraints~\eqref{constraint:lambda-0} and~\eqref{eqn:constraint_PosEigen}. Combining them with the $N$ equations given by the boundary conditions~\eqref{cond:boundary} for 
$a(0)$, we get $2N+1$ equations for $2N+1$ unknowns $\{a_k(0)\}$.  
The linear system~\eqref{ODE-1D} for $a$ is then uniquely solvable, which further uniquely determines the approximate solution $f_N(x,v)$ by~\eqref{def:f-N}.

\subsection{Numerical scheme}\label{sec:scheme}

Let us now summarize the numerical algorithm for the half space
equation.  For simplicity, we present the algorithm for the 1D case
and the extension to the higher dimensional cases is similar.

The whole procedure consists of two parts: Compute the damped
equation, as shown in Algorithm~\ref{alg:damped} and recover the
solution to the original equation, as presented in
Algorithm~\ref{alg:recover}. Computing the damped equation itself has
discretization set-up step and computation step.

\RestyleAlgo{boxruled}
\begin{algorithm}
\KwData{Boundary condition: $\phi(v)$ for $v>0$ and the discretization $N$.}
\KwResult{$f$ that solves~\eqref{variational}, the variational formulation of~\eqref{eq:half-space-damping}.}
\textbf{Step I} Set up discretization:
\begin{itemize}
\item[1.] Construct $2N+1$ basis functions.
\item[2.] Compute two matrices defined in~\eqref{def:A-B}.
\item[3.] Solve the generalized eigenvalue problem~\eqref{ref:geneigen}.
\item[4.] Store the $N+1$ eigenvectors associated with non-negative eigenvalues.
\end{itemize}
\textbf{Step II} Compute the damped equation, seek for $a(0)$.
\begin{itemize}
\item[1.] Find $2N+1$ equations satisfied by $a(0)$:
\begin{itemize}
\item Use~\eqref{eqn:constraint_PosEigen} to find $N+1$ equations that projects out positive eigenvectors provided in II.4.
\item Impose the boundary condition~\eqref{cond:boundary}, which provides $N$ equations.
\end{itemize}
\item[2.] Compute $a(0)$.
\end{itemize}

\textbf{Step III} Assemble $f$ using equation\eqref{def:f-N}.
\caption{Compute the damped equation~\eqref{eq:half-space-damping}}\label{alg:damped}
\end{algorithm}

The first substep in Step I requires constructing $2N+1$ basis functions. Since it depends on the collision operator, we leave the details to numerical example section where we show basis preparation for the linearized BGK and the transport equation. The forth step in Step I requires the number of non-negative eigenvalue being exactly $N+1$ and this is guaranteed by Proposition 4.6, which is also used in substep 1 in Step II. 

The main cost of the numerical scheme lies in solving the eigenvalue
problem \eqref{ref:geneigen}, which scales cubicly as $N$
increases. Note that this is a common step for different boundary
conditions for the damped equation, and thus only needs to be done
once. As we employ a spectral discretization, as shown further in the
numerical results, accurate results are obtained even with a small
number of basis functions $2N + 1$. Therefore, the computational cost
is quite low.

\RestyleAlgo{boxruled}
\begin{algorithm}
\KwData{Boundary condition: $\phi(v)$ for $v>0$ and the positive modes $X_{+,0}$.}
\KwResult{$f_\phi$ that solves~\eqref{eq:half-space-full}.}
\begin{itemize}
\item[1.] Use Algorithm~\ref{alg:damped} to
  compute~\eqref{eq:half-space-damping} using $\phi$ as the boundary
  condition. 

  Denote the solution by $f$.
\item[2.] Use Algorithm~\ref{alg:damped} to
  compute~\eqref{eq:half-space-damping} using $X_{+,0}$ as the
  boundary conditions. 

  Denote the solution by $g_{+,0}$.
\item[3.] Compute $C$ in~\eqref{def:C} and $U$ in~\eqref{def:U-pm-0}.
\item[4.] Invert $C$ for $\eta$ as shown in~\eqref{eq:linearsysC}.
\item[5.] $f_\phi$ given by~\eqref{soln:approx} and $f_\infty$ given
  by the equation below~\eqref{eq:half-space-full}.
\end{itemize}
\caption{Recover the solution to the original
  equation\eqref{eq:half-space-full}}\label{alg:recover}
\end{algorithm}

%
%
%
%
%
%
%
%
%
%
%

\section{Numerical Examples}

As explained in Section~\ref{sec:scheme}, the overall strategy to solve
the half-space equation consists of two steps: First, we solve for the numerical solution to the half-space damped equation \eqref{eq:half-space-damping-1}
using the Galerkin approximation; Second, we recover the undamped
solution by Proposition~\ref{prop:recovery}, which involves the
solutions of the damped equation with various boundary conditions in order to
obtain the matrix $C$ in the linear system \eqref{eq:linearsysC}.

Below we consider the linearized BGK equation and a linear transport
equation, both restricted to one dimension and single species (more
general cases are studied and presented in \cite{LiLuSun2015}).  As in
Proposition~\ref{prop:approximation}, for the Galerkin approximation,
we specify a set of even and odd functions to form the approximation
space $\Gamma_N$. The choice of these functions depends on the
particular equation under study. By Proposition~\ref{prop:1D-ODE}, the
solution of the approximate system 
\eqref{ODE-1D}--\eqref{cond:boundary} is reduced to solving the
generalized eigenvalue problem \eqref{ref:geneigen}, where we assemble
the matrices $\mathsf{A}$ and $\mathsf{B}$ using Gaussian
quadrature. This will be discussed in more details below.

Our algorithm is implemented in \textsf{MATLAB}. The Gaussian
quadrature abscissas and weights are obtained using symbolic
calculations in order to guarantee the precision.

\subsection{Linearized BGK equation} 
We first consider the case of one-dimension linearized BGK
equation. In this case, the basis functions is constructed using
the half-space Hermite polynomials.  Those are orthogonal
polynomials defined on the positive half $v$-axis with the weight
functions $\exp(-v^2)$: $\{B_n(v), v > 0\}$ such that
each $B_n(v)$ is a polynomial of order $n$ and 
\begin{equation}
  \int_0^\infty B_m(v) B_n(v) e^{-v^2} \ud v = \delta_{nm} \,.
\end{equation}
The orthogonal polynomials can be constructed using three term
recursion formula (see for example \cite{Shizgal:81}).  For completeness we recall some details in Appendix~\ref{sec:ortho}.

The basis functions $\psi_k$'s we need are either odd or even with respect
to $v = -u$. Hence we shift the functions $B_n$'s by
$-u$ and make even and odd extensions:
\begin{align}
  B^E_n(v) = 
  \begin{cases}
    B_n(v + u) / \sqrt{2}, & v > -u \,, \\
    B_n(-v - u) / \sqrt{2}, & v < -u \,.  
  \end{cases} \\
  B^O_n(v) = 
  \begin{cases}
    B_n(v + u) / \sqrt{2}, & v > -u \,, \\
    - B_n(-v - u) / \sqrt{2}, & v < -u \,. 
  \end{cases}
\end{align}
Finally, $\psi_k$'s are obtained by multiplying these
functions by the square root of the Maxwellian: for $n \geq 1$
\begin{equation}
   \label{def:basis-function}
\begin{aligned} 
  & \psi_{2n-1} = B_{n-1}^O e^{-(v+u)^2/2}, \\
  & \psi_{2n} = B_{n-1}^E e^{-(v+u)^2/2}.
\end{aligned}
\end{equation}
By definition, $\psi_{2n-1}$ is odd, $\psi_{2n}$ is even, and they form a 
orthonormal basis of $L^2(\rd v)$. For a fixed $n$, $(v+u) \psi_{2n}(v)$ is a odd 
function with respect to $v = -u$. For $v > -u$,
\begin{equation*}
  (v+u) \psi_{2n}(v) = (v + u) B_{n-1}(v+u) e^{-(v +u)^2/2} / \sqrt{2}.  
\end{equation*}
Since $(v+u) B_n(v+u)$ is a $n$-th order polynomial in $v+u$, there
exists an expansion 
\begin{equation}
  (v+u) B_{n-1}(v+u) = \sum_{i = 0}^n \alpha_i B_i(v+u). 
\end{equation}
This yields that 
\begin{equation}
  (v+u) \psi_{2n}(v) = \sum_{i=0}^n \alpha_i \psi_{2i+1} \in \Span\{ \psi_1, \cdots, \psi_{2n+1} \}. 
\end{equation}
Therefore, $\Gamma_N = \Span\{\psi_1, \cdots, \psi_{2N+1}\}$ satisfies
the condition of Proposition~\ref{prop:approximation} and the
variational formulation
\eqref{eq:variational-finitedim}--\eqref{cond:boundary} is well-posed.
The $(2N+1)\times(2N+1)$ matrices $\mathsf{A}$ and $\mathsf{B}$ are
then given by
\begin{equation} \nn
  \mathsf{A}_{ij}  = \int_\R (v+u) \psi_i \psi_j \ud v
  \quad \text{and} \quad 
  \mathsf{B}_{ij} = -\int_\R \psi_i \mc{L}_d \psi_j \ud v. 
\end{equation}
Note that both matrices are symmetric. The matrix $\mathsf{A}$ can be
obtained by using the recurrence relation of the orthogonal
polynomials. For the matrix $\mathsf{B}$, recall that
\begin{align*}
  \mc{L} \psi_i &= \psi_i - m_i= \psi_i - \chi_0 \int_\R
  \psi_i\chi_0\ud{v}- \chi_+ \int_\R
  \psi_i \chi_+\ud{v} - \chi_- \int_\R \psi_i\chi_-\ud{v}.\\
  \mc{L}_d \psi_i & = \mc{L} \psi_i + \alpha \sum_{k=1}^{\nu_+} (v +
  u) X_{+,k}
  \int_\R (v + u)X_{+,k} \psi_i \ud{v} \\
  & + \alpha \sum_{k=1}^{\nu_-} (v + u) X_{-,k} \int_\R (v +
  u)X_{-,k} \psi_i \ud{v}
  + \alpha \sum_{k=1}^{\nu_0} (v  + u) X_{0,k} \int_\R (v + u) X_0 \psi_i \ud{v}\nonumber\\
  & + \alpha \sum_{k=1}^{\nu_0} (v + u) \CalL^{-1}((v + u)X_{0,k})\int_\R
  (v + u) \CalL^{-1}((v + u)X_{0,k}) \psi_i \ud{v}.
\end{align*}
All the integrals involved in calculating $\mathsf{B}$ can be easily
made exact up to machine precision by using Gaussian quadrature. For simplicity, let us just focus on 
\begin{equation*}
  \int_\R \psi_{2j} \chi_{0} \ud{v}
\end{equation*}
and note that the other integrals share the same structure: the
integrand is a product of two polynomials and two Gaussians
$e^{-v^2/2}$ and $e^{-(v+u)^2/2}$. To evaluate this type of integral using Gaussian quadrature, we first
split the integral into two parts:
\begin{equation*} 
  \int_\R \psi_{2j}\chi_0 \ud{v} = \int_{-u}^\infty \psi_{2j}\chi_0 \ud{v} + 
\int_{-\infty}^{-u} \psi_{2j}\chi_0 \ud{v}.
\end{equation*}
Note that $\psi_{2j}$, on either side of $-u$, is a $(j-1)$-th order
polynomial multiplied by $\exp(-(v+u)^2/2)$, while $\chi_0$ is a
quadratic function multiplied with a different weight function
$\exp(-v^2/2)$. The product of two Gaussians centered at different
locations could be combined into a single Gaussian:
\begin{align}\label{eqn:int_pos}
\int_{-u}^\infty \psi_{2j}\chi_0\ud{v} &= \frac{\sqrt{2}}{2}\int_{-u}^\infty 
B_{j-1}(v+u)\frac{\chi_0(v)}{e^{-v^2/2}}e^{-\frac{(v+u)^2+v^2}{2}}\ud{v}
\\
&=\frac{\sqrt{2}}{2} e^{-u^2/4}\int_{0}^\infty B_{j-1}(v)\frac{\chi_0(v-u)}{e^{-(v-u)^2/2}}e^{-(v-u/2)^2}\ud{v} \,. \nn 
\end{align}
Similarly, for $v < -u$ we have
\begin{align}\label{eqn:int_neg}
\int_{-\infty}^{-u} \psi_{2j}\chi_0\ud{v} 
&= \frac{\sqrt{2}}{2}\int_{-\infty}^{-u} 
B_{j-1}(-v-u)\frac{\chi_0(v)}{e^{-v^2/2}}e^{-\frac{(v+u)^2+v^2}{2}}\ud{v}
\\
&=\frac{\sqrt{2}}{2} e^{-u^2/4}\int_{0}^{\infty} B_{j-1}(v)\frac{\chi_0(-v-u)}{e^{-(v+u)^2/2}}e^{-(v+u/2)^2}\ud{v} \,. \nn
\end{align}
The integrals \eqref{eqn:int_pos} and~\eqref{eqn:int_neg} can be
evaluated up to machine precision by Gaussian quadrature based on
weight $e^{-(v-u/2)^2}$ and $e^{-(v+u/2)^2}$ respectively, as
$B_{j-1}\chi_0 e^{v^2/2}$ is a polynomial with its degree up to $N+3$.
The boundary condition~\eqref{cond:boundary} requires the numerical
evaluation of the integral
\begin{equation*}
  \int_{v+u>0} (v+u)\phi \psi_{2j} \ud{v}.
\end{equation*}
We calculate this using Gaussian quadrature with the weight $e^{-(v+u)^2}$.
The error of the quadrature depends on the number of quadrature points
and the regularity of the incoming data $\phi$.

\medskip

We now present some numerical results for the linearized BGK equation.  In
the first set of examples, we compare our numerical results with
analytical solutions, when the specified boundary data $\phi$ is given
by the restriction of some $f \in H^0\oplus H^+$ on $v>-u$. In this
case, the solution to the undamped equation~\eqref{eq:half-space-1} is
simply $f$ on the whole velocity space. As discussed in
\eqref{eqn:SevenCases}, the dimension of the space $H^0\oplus H^+$
depends on the bulk velocity $u$ and the sound speed, which is $c=\sqrt{3/2}$ in our case as $T
= 1/2$. We will choose $\chi_{+/-/0}$
defined in~\eqref{eqn:def_chi} as the incoming data. 
By the uniqueness of the half-space equation, the solution will simply be $\chi_{+/0}$ when the incoming data is chosen as $\chi_{+/0}$. 
We take six choices of $u$ corresponding to the six cases listed in
\eqref{eqn:SevenCases} (the case $u < -c$ gives an empty $H^0\oplus
H^+$ hence not included). The results are shown in
Figures~\ref{fig:u-2}--\ref{fig:u3} below. In all these figures, the
blue squared line is the incoming data, given by $\chi_-$,
$\chi_0$ and $\chi_+$ respectively. The green triangle line is the
solution at $x=\infty$, and the red dotted line is the solution at $x
= 0$.

Several remarks are in order: First, when the $\chi$ modes lie in
$H^0\oplus H^+$ for the given bulk background velocity $u$, we observe
in Figure 1-6 that the solution at $x=0$ gives a perfect match. We thus recover the exact solution from the numerical scheme. Second, we note that in general, the solution exhibits a jump at $v =
-u$, as clearly seen for example in Figure~\ref{fig:u-2}(left). This
justifies our choice of the even-odd formulation and basis functions
from the half-space Hermite polynomials. Finally, we remark that we have used a filtering (with $2$nd order
cosine filter) to reduce the Gibbs oscillations caused by the large
derivatives in some cases (for instance Figure~\ref{fig:u-1/2}(left)).

\begin{figure}[htp]
\begin{center}
   {\includegraphics[width=2.0in]{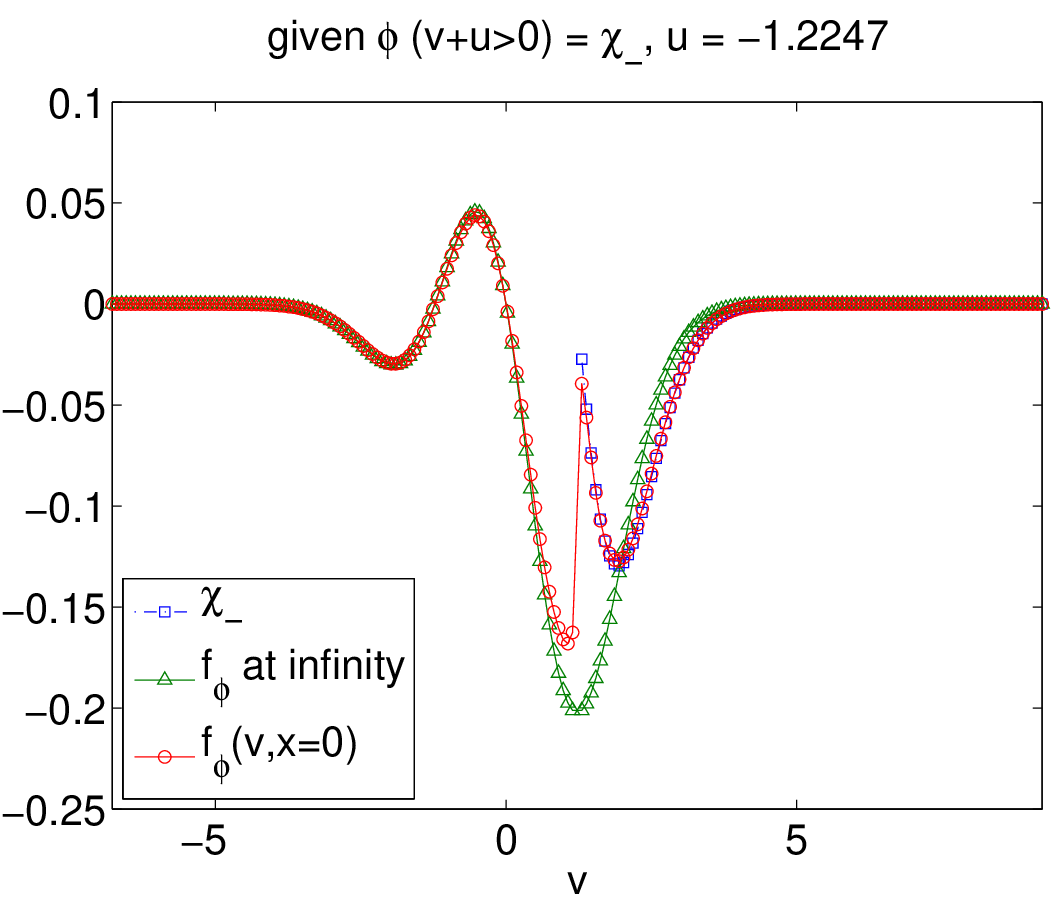}
   \includegraphics[width=2.0in]{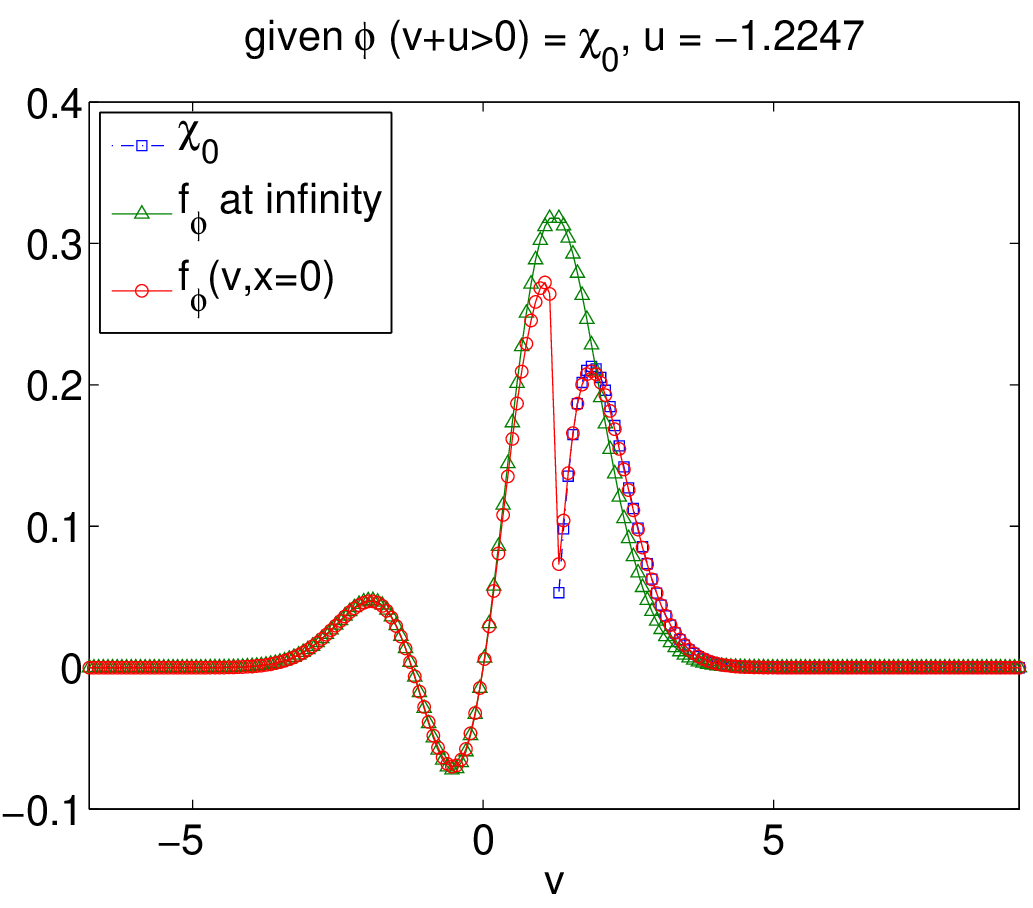}
   \includegraphics[width=2.0in]{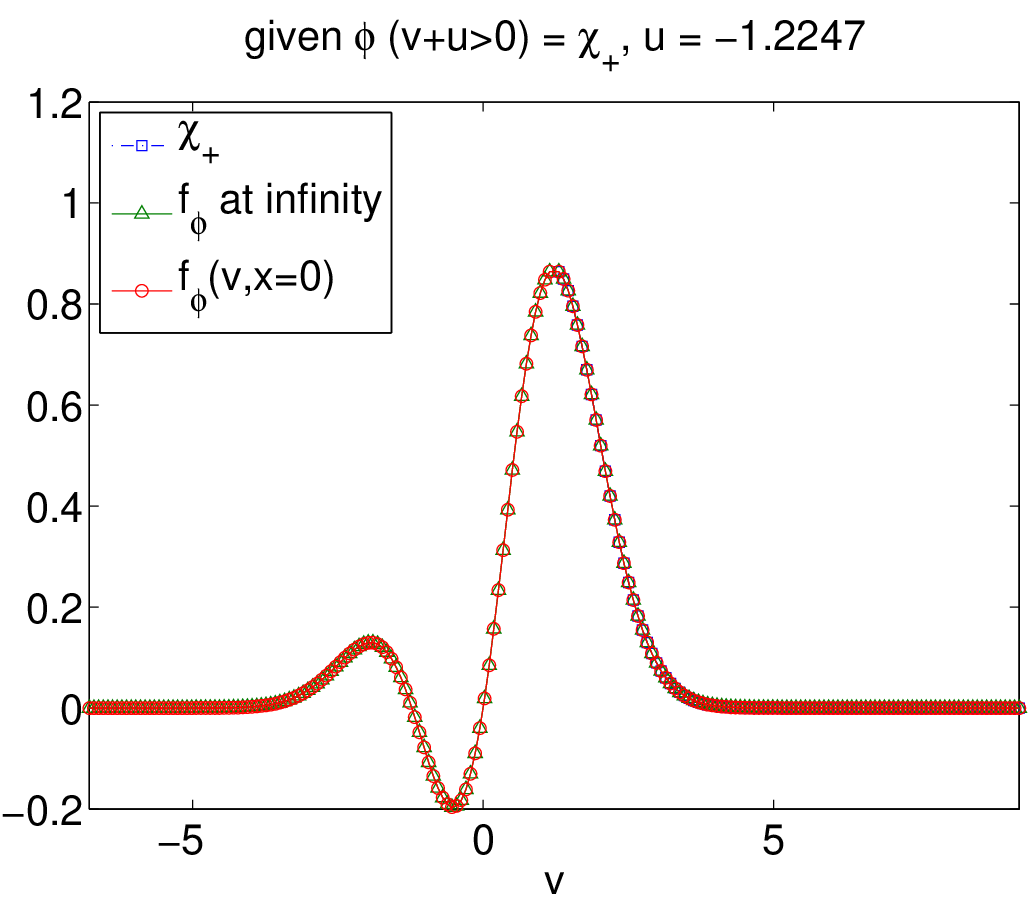}}
\end{center}\caption{$u = -\sqrt{1.5} = -c$. In this case $\chi_+\in
  H^0$, and $\chi_-$ and $\chi_0$ are in $H^-$. }\label{fig:u-2}
\end{figure}
\begin{figure}[htp]
\begin{center}
   {\includegraphics[width=2.0in]{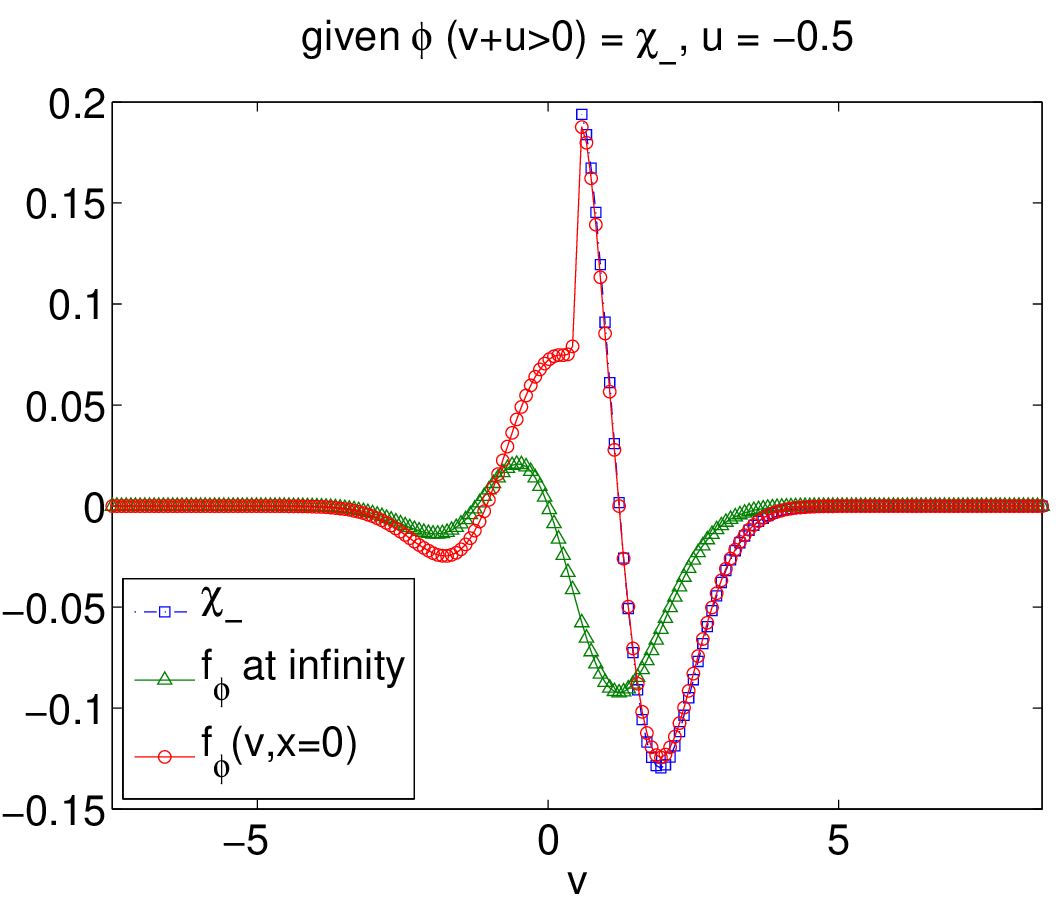}
   \includegraphics[width=2.0in]{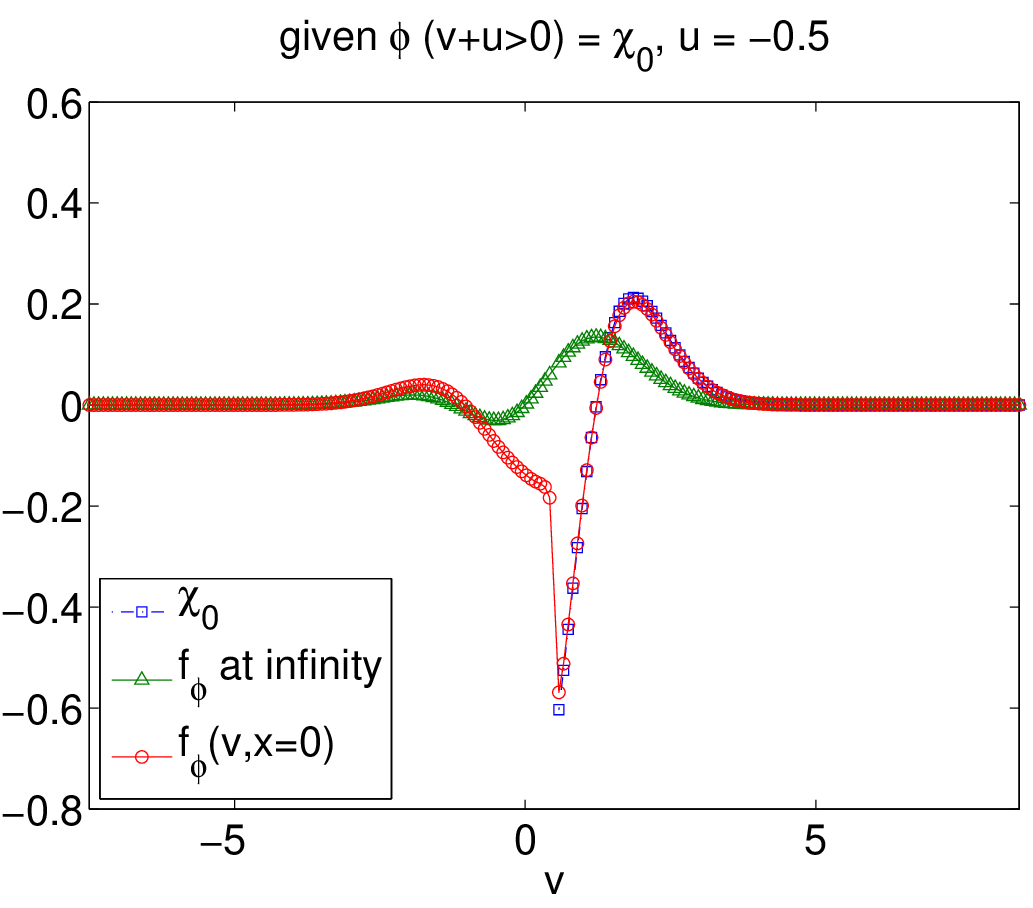}
   \includegraphics[width=2.0in]{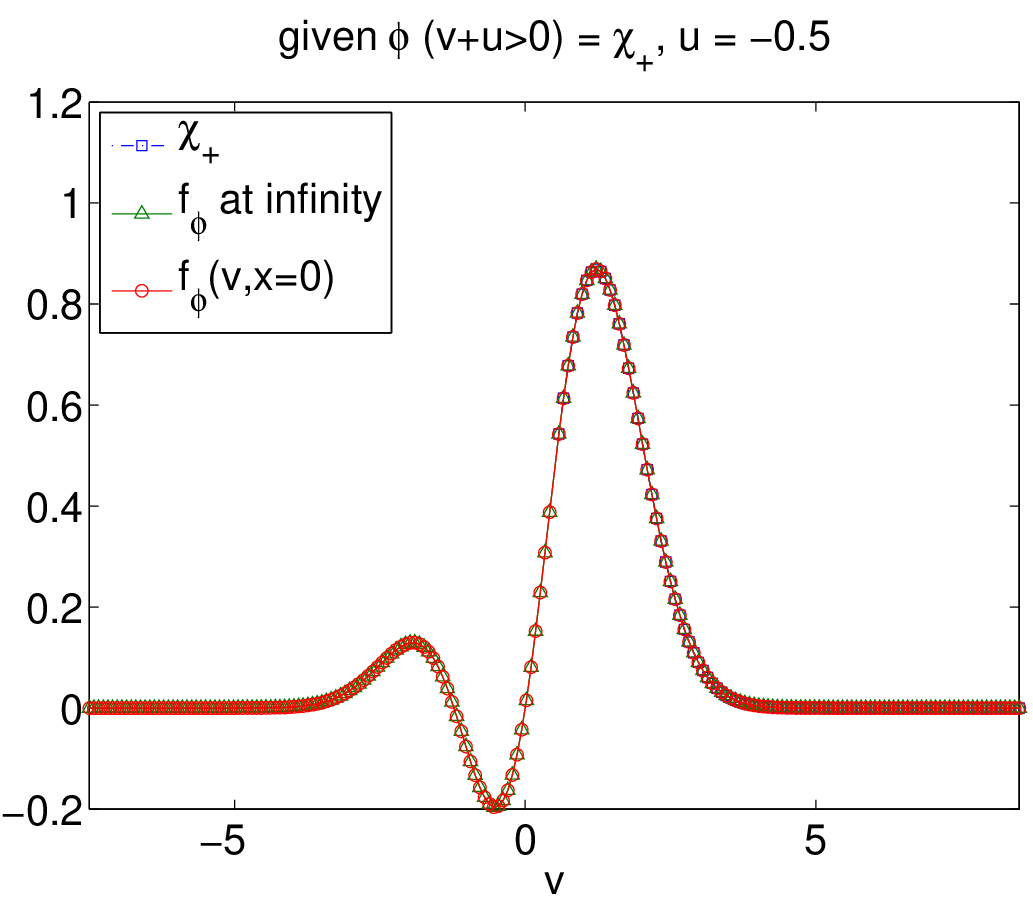}}
\end{center}\caption{$-c < u = -0.5 < 0$. In this case $\chi_+\in
  H^+$, and $\chi_-$ and $\chi_0$ are in $H^-$. }\label{fig:u-1/2}
\end{figure}
\begin{figure}[htp]
\begin{center}
   {\includegraphics[width=2.0in]{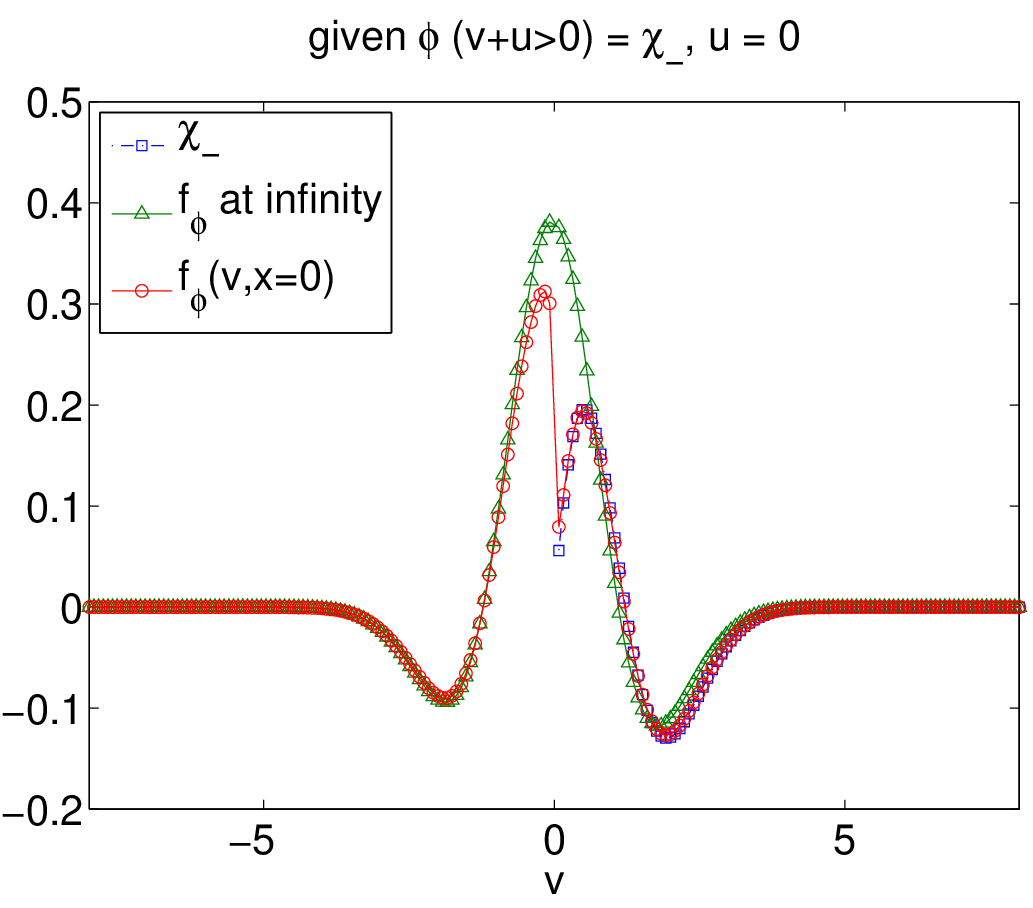}
   \includegraphics[width=2.0in]{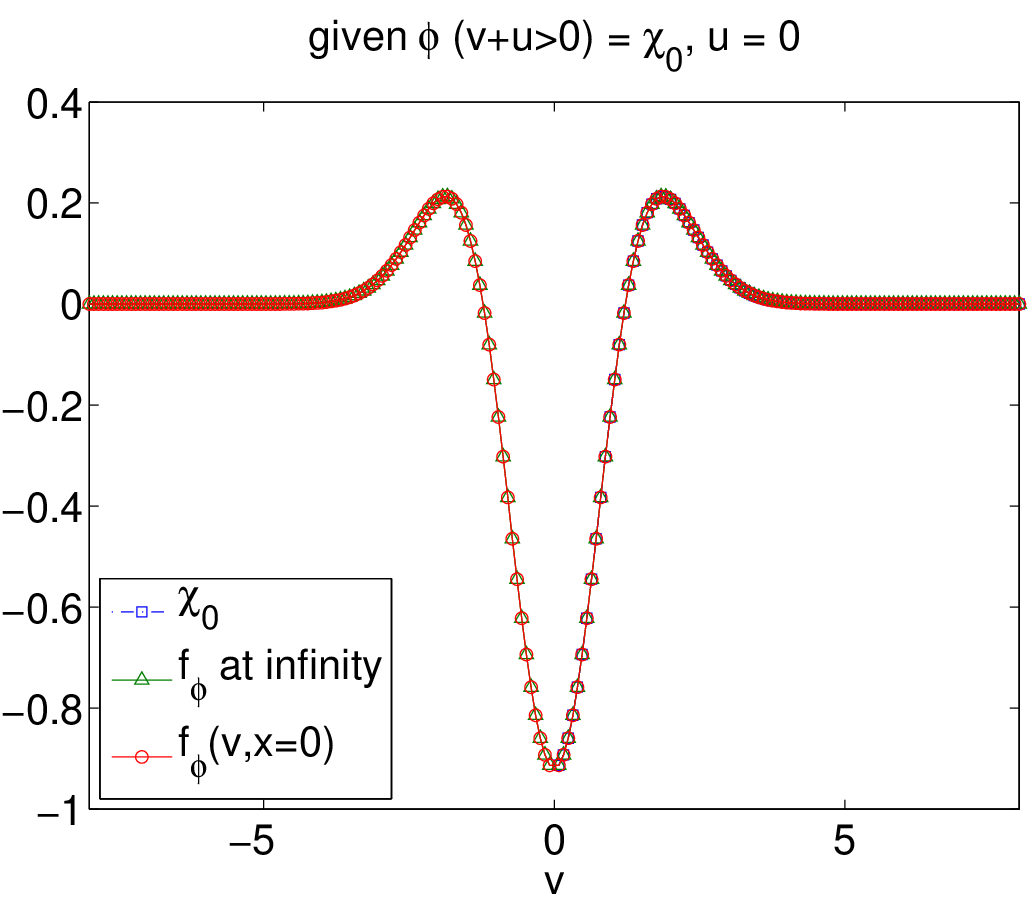}
   \includegraphics[width=2.0in]{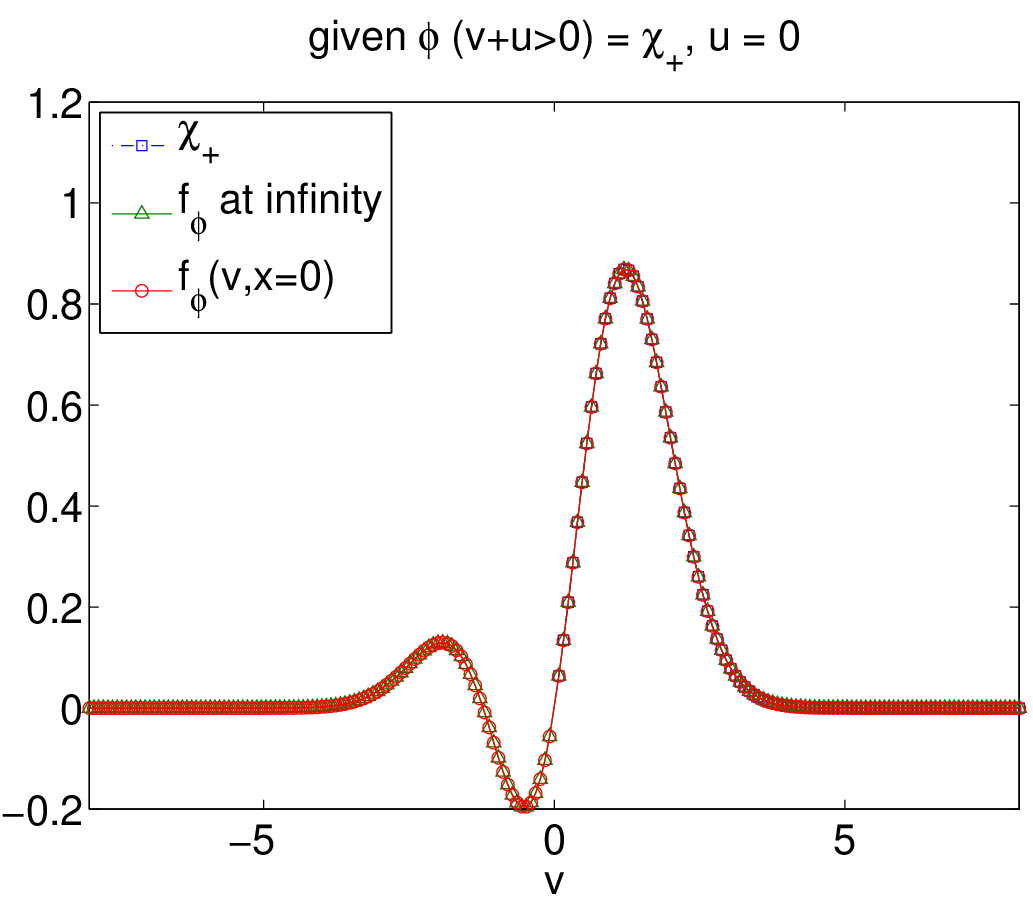}}
\end{center}\caption{$u = 0$. In this case $\chi_+\in H^+$, $\chi_0\in
  H^0$ and $\chi_-\in H^-$.}
\end{figure}
\begin{figure}[htp]
\begin{center}
   {\includegraphics[width=2.0in]{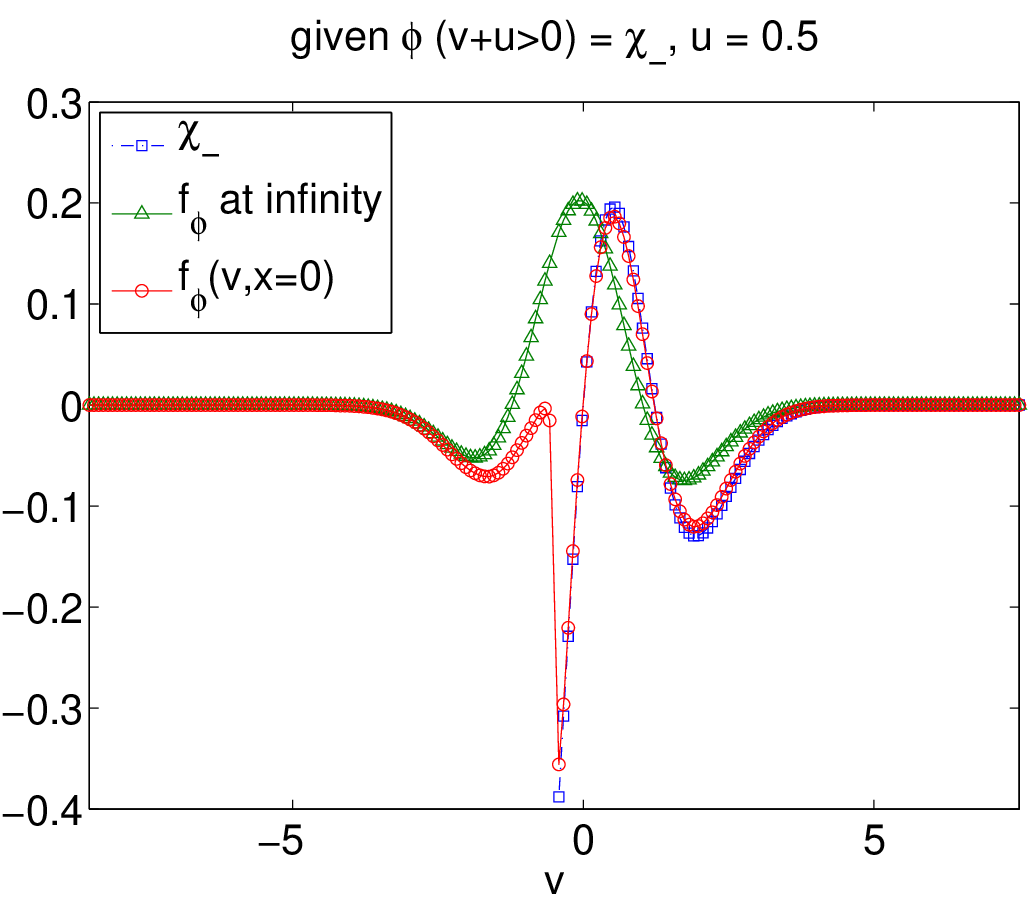}
   \includegraphics[width=2.0in]{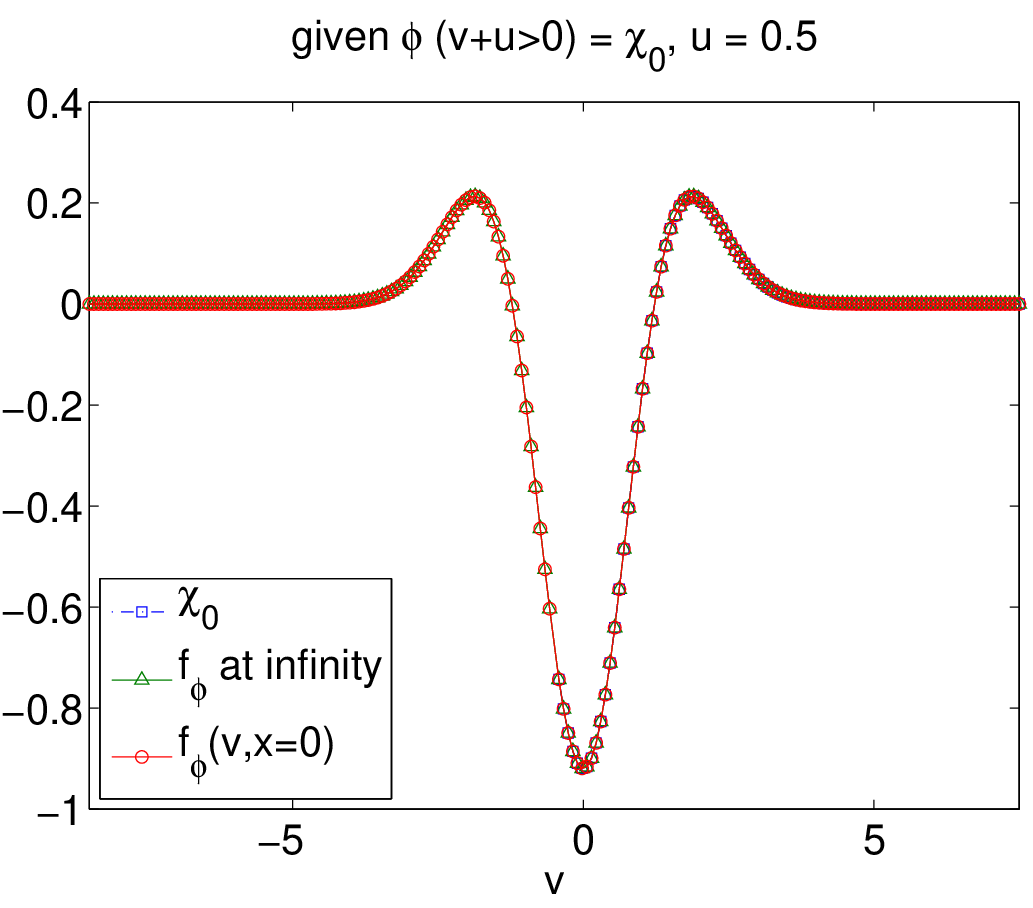}
   \includegraphics[width=2.0in]{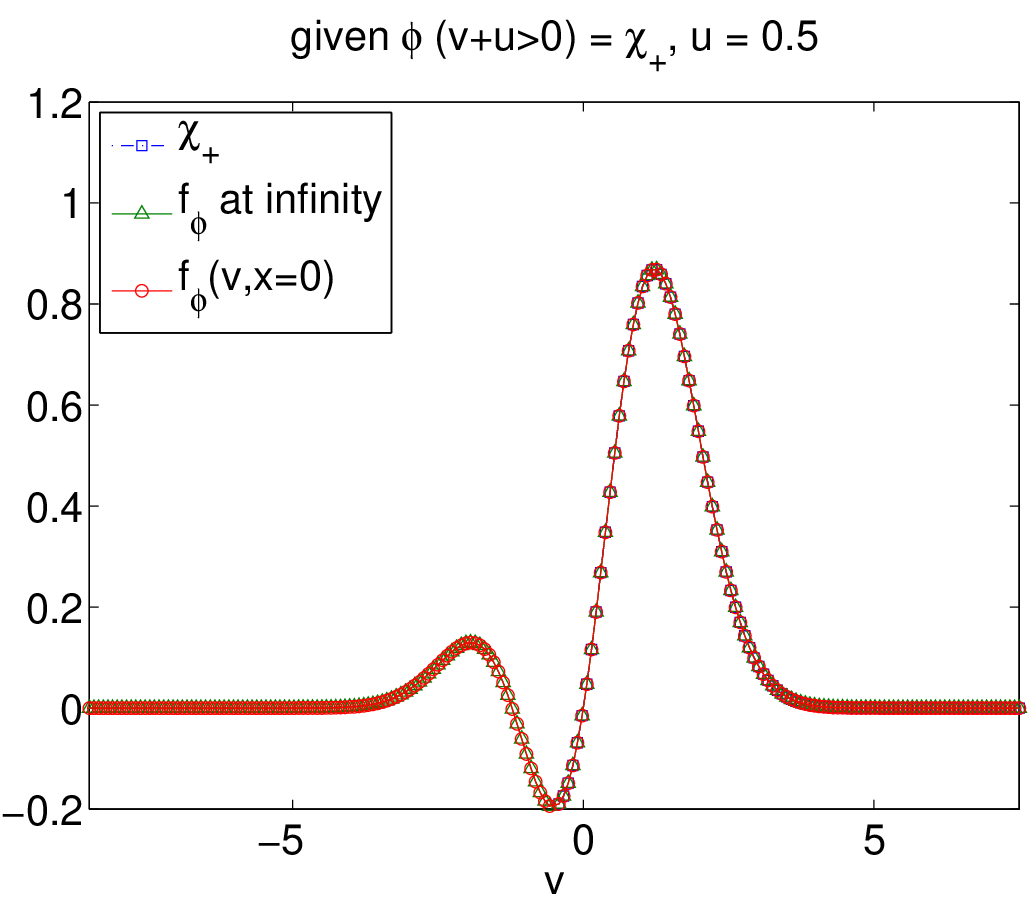}}
\end{center}\caption{$0 < u = 0.5 < c$. In this case $\chi_+$ and
  $\chi_0$ are in $H^+$, and $\chi_-\in H^-$. }
\end{figure}
\begin{figure}[htp]
\begin{center}
   {\includegraphics[width=2.0in]{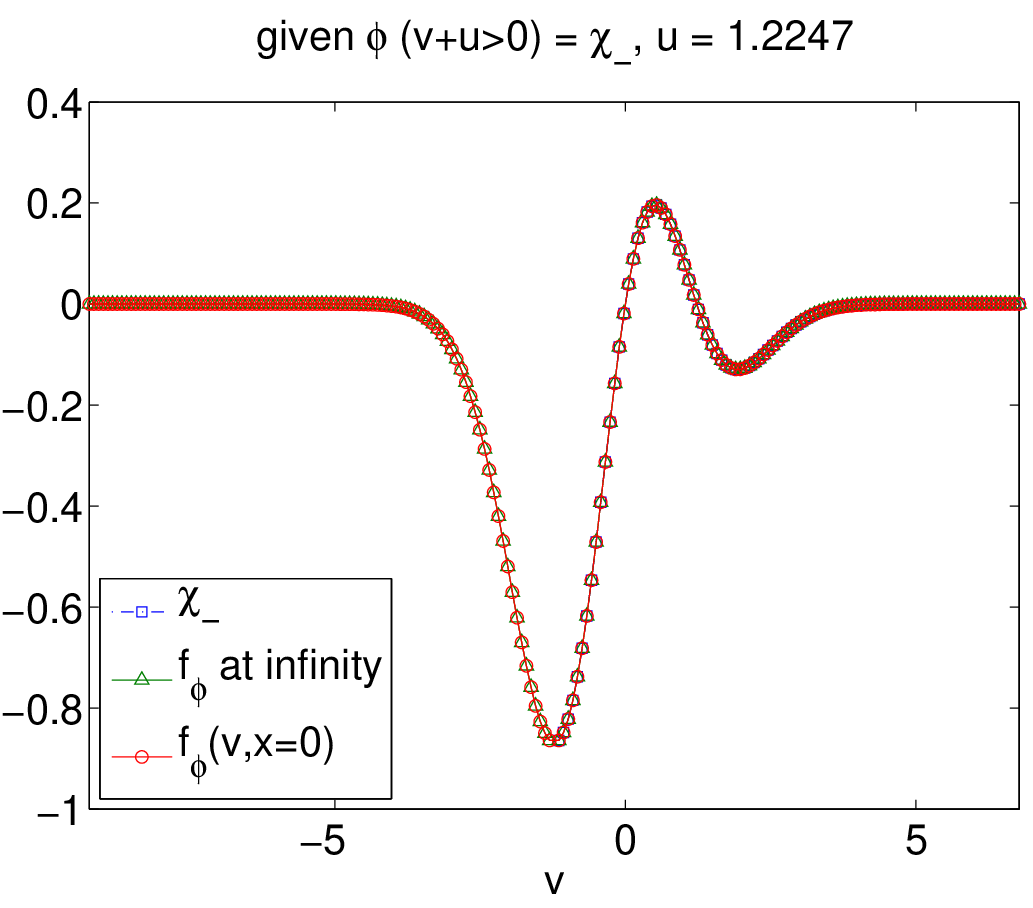}
   \includegraphics[width=2.0in]{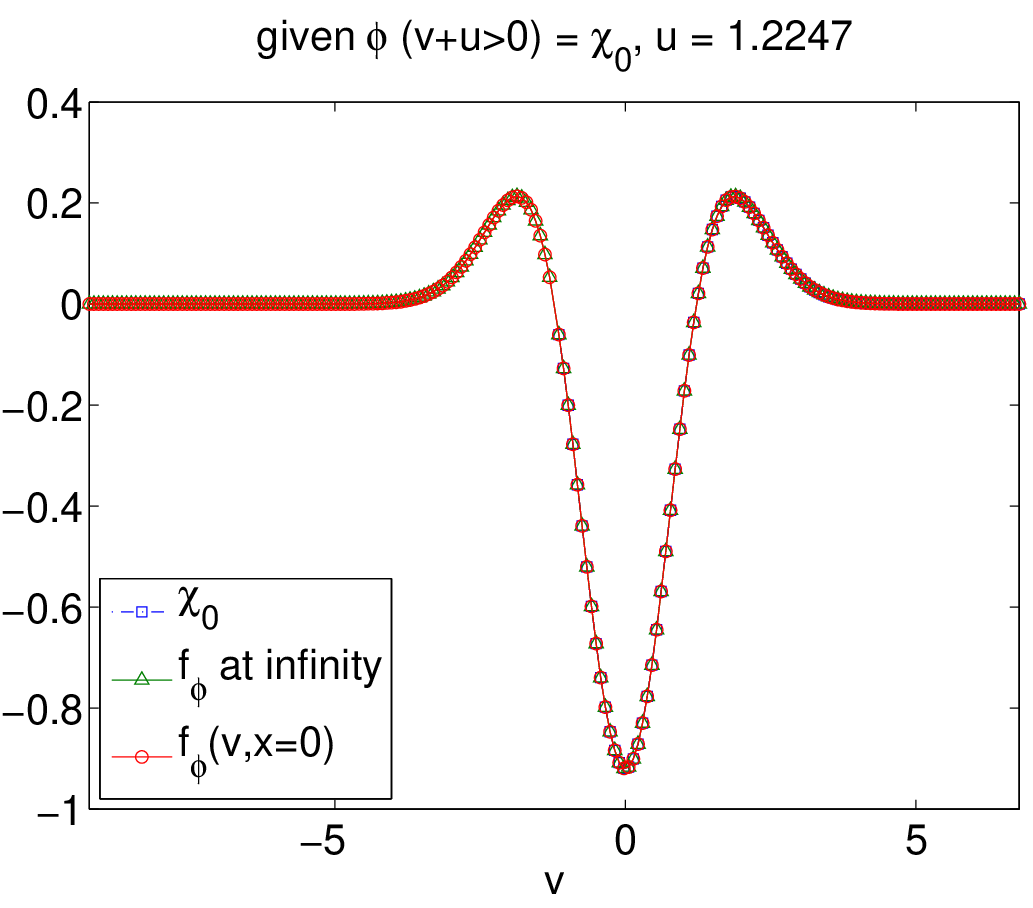}
   \includegraphics[width=2.0in]{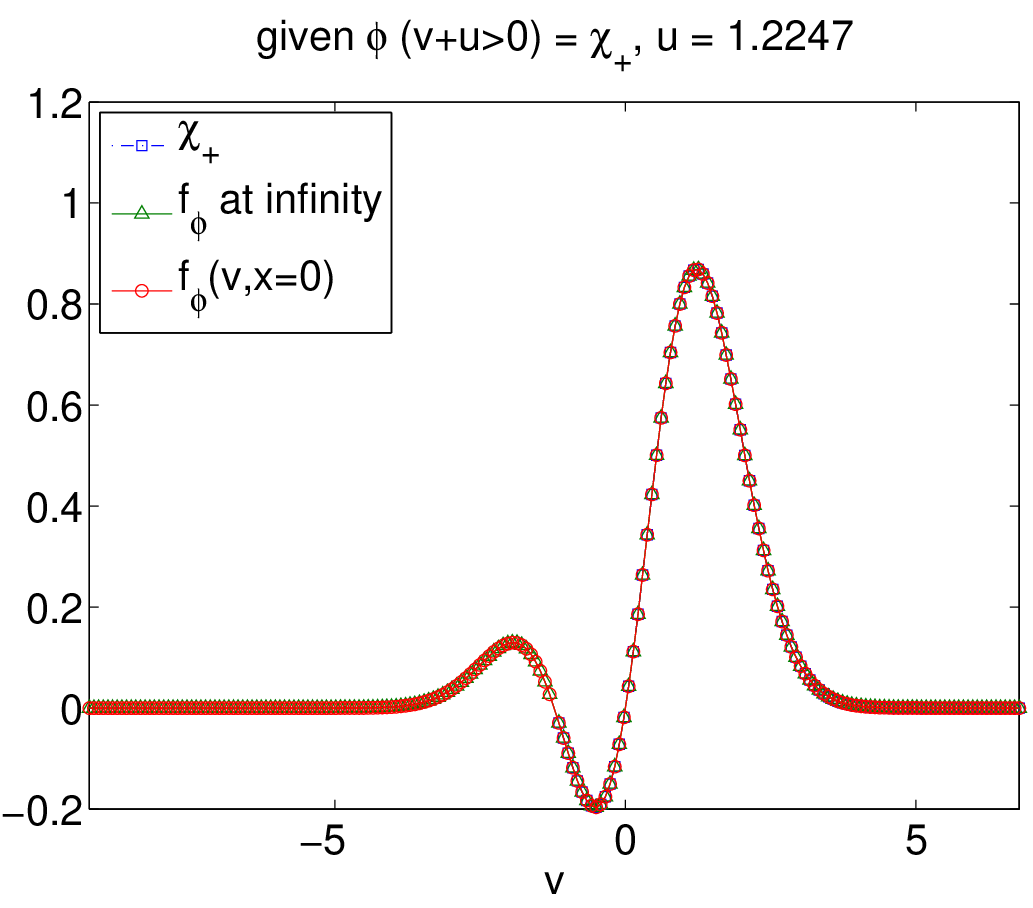}}
\end{center}\caption{$u = \sqrt{1.5} = c$. In this case $\chi_+$ and
  $\chi_0$ are in $H^+$, and $\chi_-\in H^0$. }
\end{figure}
\begin{figure}[htp]
\begin{center}
   {\includegraphics[width=2.0in]{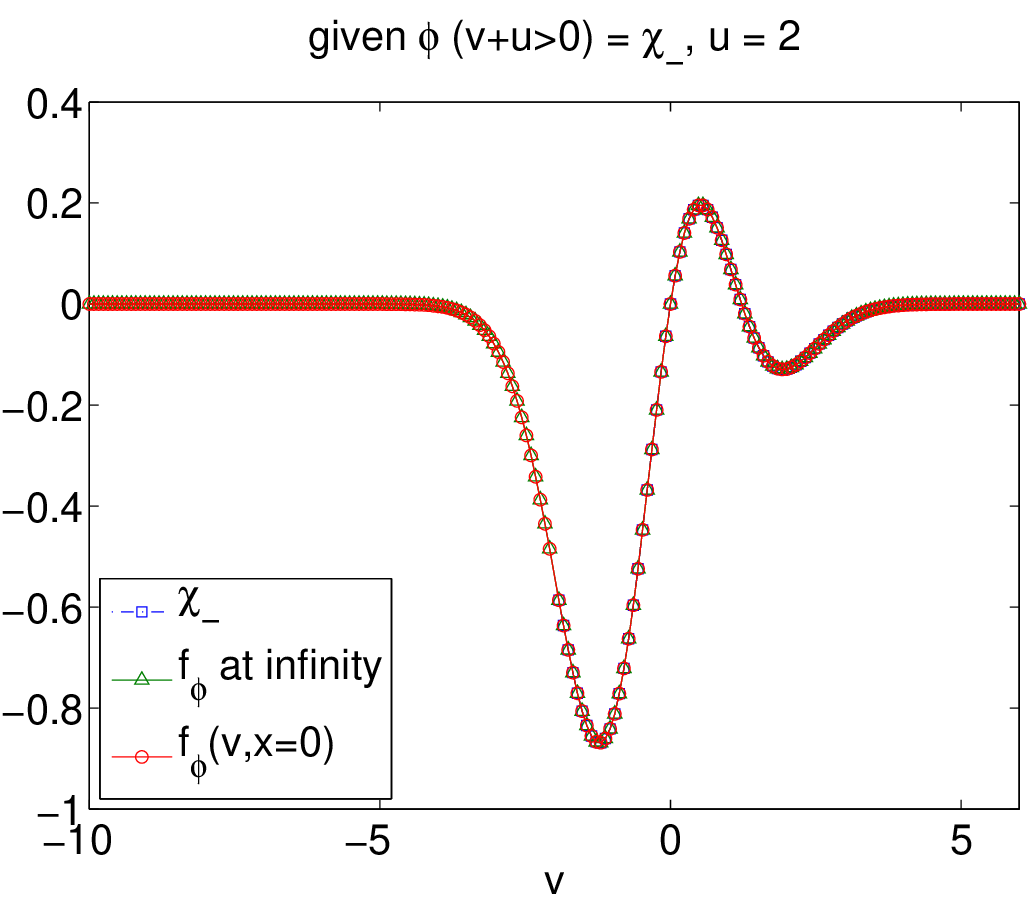}
   \includegraphics[width=2.0in]{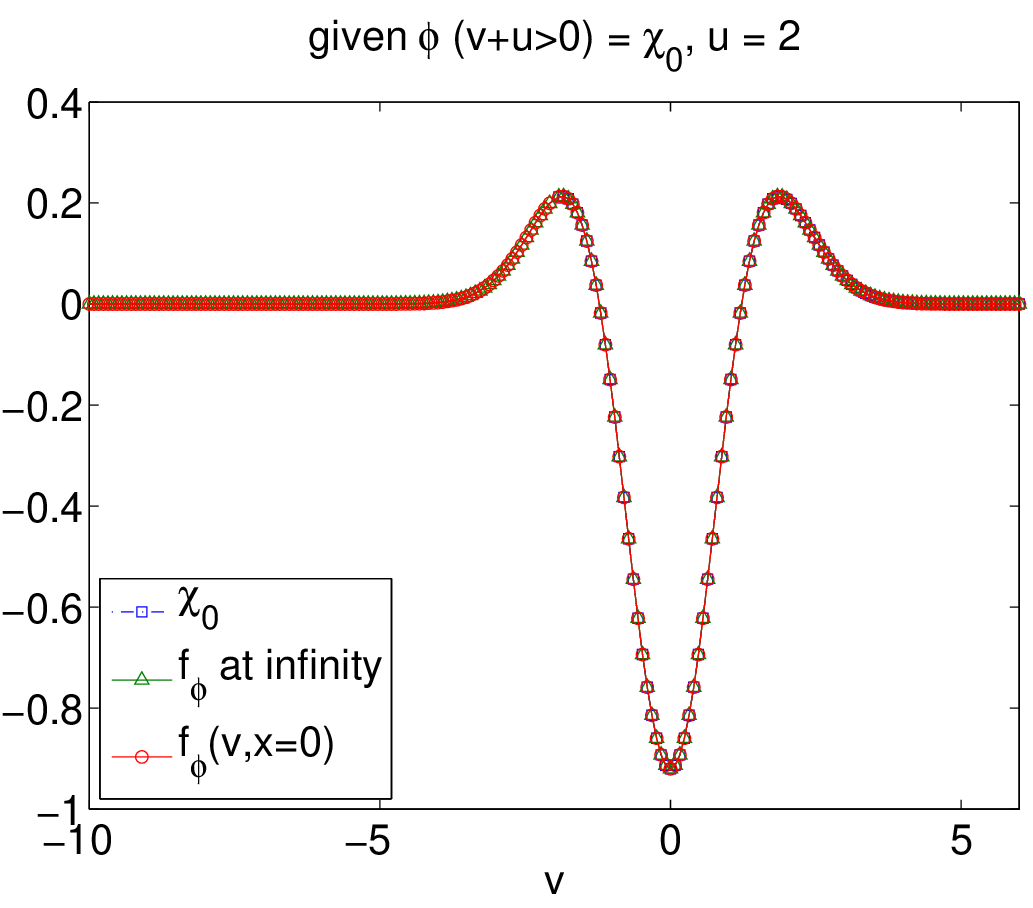}
   \includegraphics[width=2.0in]{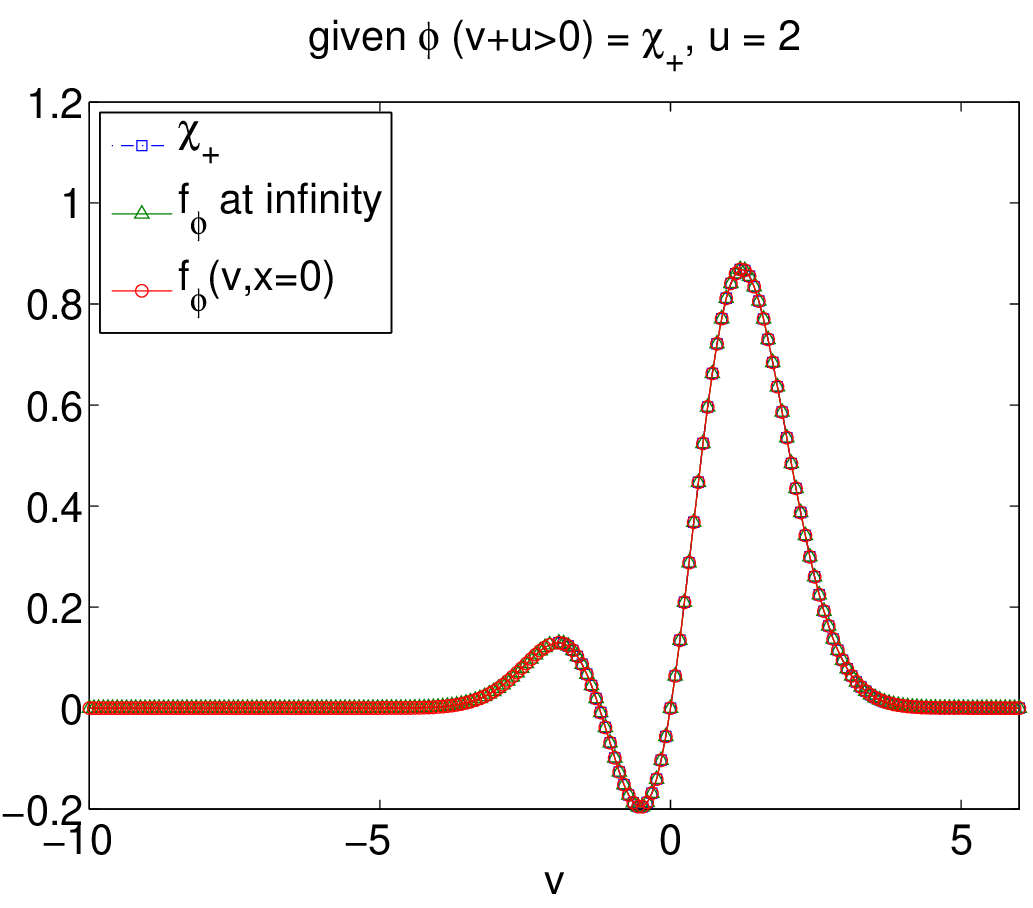}}
\end{center}\caption{$u = 2 > c$. In this case all $\chi$ are in
  $H^+$. }\label{fig:u3}
\end{figure}

Next, we consider an example where the exact solution is not known.
We solve the equation~\eqref{eq:half-space-1} for $u = 0$ with
boundary data $\phi = v^3, v>0$.  The numerical solution is
shown in Figure~\ref{fig:v_cube}.
\begin{figure}[htp]
\begin{center}
\includegraphics[width = 3.5in]{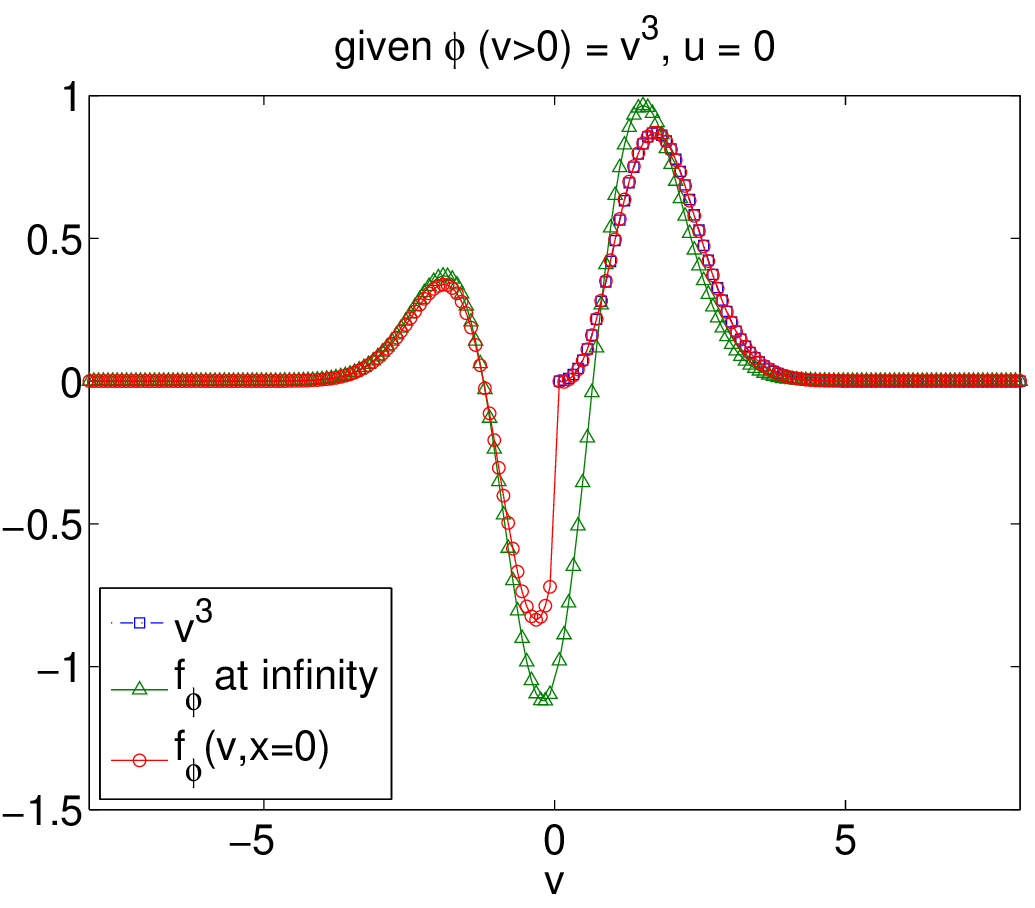}
\end{center}
\caption{Blue boxed line is the input data $\phi = v^3 (v>0)$. Green triangle line is the solution at infinity and the red circled line is the solution at the boundary. N = 36 here.}\label{fig:v_cube}
\end{figure}

\subsection{Isotropic neutron transport equation}

We further consider the isotropic neutron transport equation. The
construction of the basis functions is similar to the linearized BGK
case. However, instead of using half-space Hermite polynomials, we
start with Legendre polynomials on the interval $[0, 1]$ and carry out
the even-odd extensions. The Legendre polynomials, which are
orthogonal polynomials for constant weight function, are used since
the equilibrium states for the neutron transport equation are simply
constants. We then apply Gauss-Legendre quadrature to assemble
$\mathsf{A}$ and $\mathsf{B}$ for the generalized eigenvalue
problem. The rest of the details are skipped here since the
construction is relatively straightforward compared with the
linearized BGK case.

To validate our methods in this case, we compare the numerical solution
with the analytical solution with boundary data given by $\phi =
v$ for $v \in [0, 1]$. The analytical solution is known as 
\begin{equation}
f_\phi(-v) = \frac{1}{\sqrt{3}}H(v) - v, \qquad v>0 \,,
\end{equation}
where $H$ is the Chandrasekhar H-function.  In
Figure~\ref{fig:linear_v} we plot both analytical and numerical
solutions, where a second order cosine filter is used. The plot shows
good agreement of the numerical solution with the exact one. Using the
knowledge of the singularity of the solution at $v = 0$, more
sophisticated techniques can be used to post-process the Galerkin
solution. For example, Figure~\ref{fig:linear_v_zoom} shows the
result of using Gegenbauer reprojection method (with end-point
singularity) \cites{GottliebShu:97, ChenShu:14, ChenShu:15}. Excellent agreement with the exact solution is observed. 

Furthermore, the limit at $x = \infty$ of the solution to the
half-space isotropic NTE is a constant, whose amplitude agrees with
the extrapolation length. In Table~\ref{table:extrapolation_length} we
compare our numerical approximation of the extrapolation length with the
exact result, which is again in good agreement. In comparison, we note
that the approximate value for the extrapolation length obtained
in~\cite{Coron:90} is $0.71040377$ with $70$ modes, while we achieve
better results with piecewise polynomial of orders up to $12$.
\begin{table}[htp]
\caption{Numerical approximations of the extrapolation length.}
\begin{center}
\begin{tabular}{|c|c|c|c|c|c|c|c|}
4 & 0.709324539775964 & 24 & 0.710445373807707 & 44 & 0.710446026371328 & 64 & 0.710446075479882\\
8 & 0.710386430787361 & 28 & 0.710445703544666 & 48 & 0.710446044962143 & 68 & 0.710446078520678\\
12 & 0.710434523809144 & 32 & 0.710445863417934 & 52 & 0.710446057194912 & 72 & 0.710446080785171\\
16 & 0.710442451548528 & 36 & 0.710445948444682 & 56 & 0.710446065509628 & 76 & 0.710446082499459\\
20 & 0.710444603305304 & 40 & 0.710445997010591 & 60 & 0.710446071320336 & exact & 0.710446089598763
\end{tabular}
\end{center}
\label{table:extrapolation_length}
\end{table}%

\begin{figure}
\centering
\includegraphics[width = 0.5\textwidth]{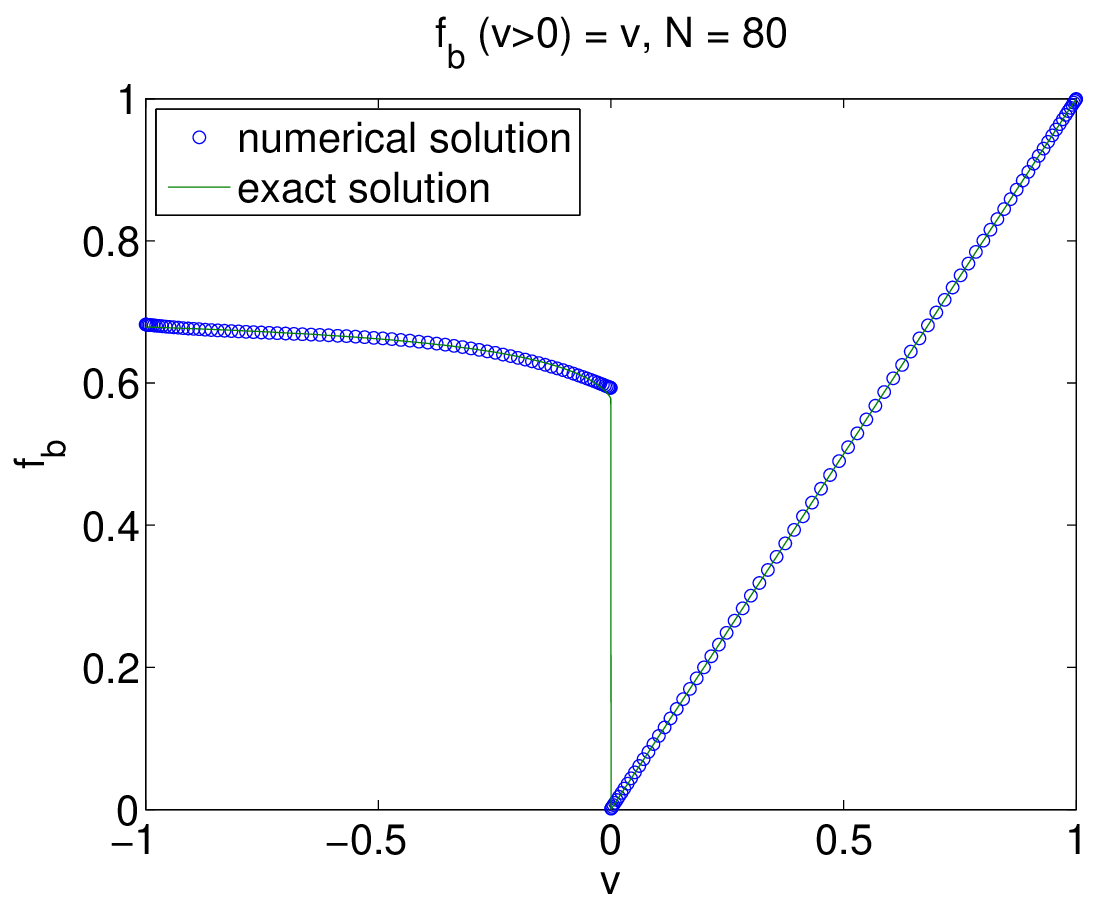}
\caption{Analytical solution and numerical solution to the isotropic neutron transport equation at $x=0$.}\label{fig:linear_v}
\end{figure}

\begin{figure}
\centering
\includegraphics[width = 0.5\textwidth]{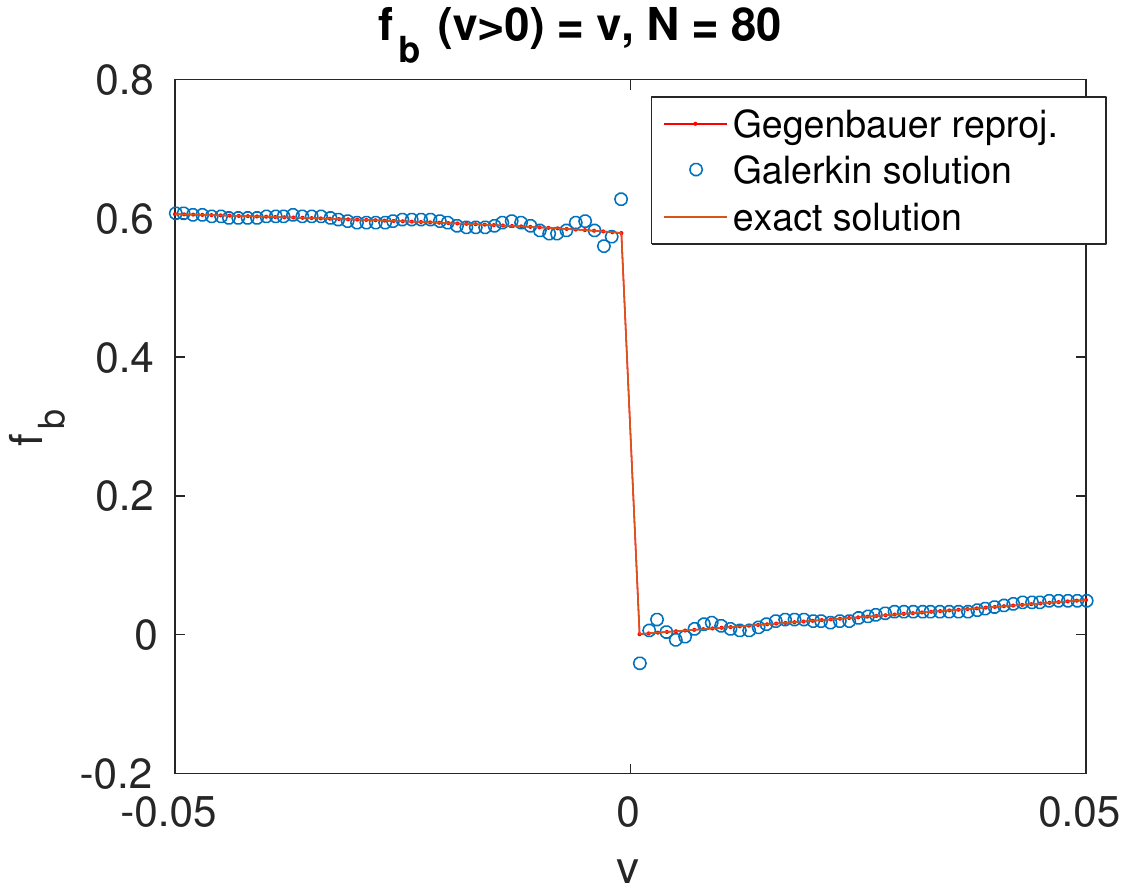}
\caption{Analytical solution, numerical Galerkin solution, and the
  Gegenbauer reprojected solution to the isotropic neutron transport
  equation at $x=0$ (zoomed in around $v = 0$).}\label{fig:linear_v_zoom}
\end{figure}


\appendix 

\section{Half-Hermite polynomial}\label{sec:ortho}

Here we derive the half-space orthogonal polynomial with weight
$\exp(-(v-u)^2)$ with $u$ a real number. The zeroth order half
space Hermite polynomial is:
\begin{equation}
  B_0 = \frac{1}{\sqrt{m_0}}\quad\text{with}\quad m_0 = \frac{\sqrt{\pi}}{2}\left(1+\erf(u)\right).
\end{equation}
The higher order polynomials are defined through recurrence relation: 
\begin{equation}\label{eq:recusion}
  \sqrt{\beta_{n+1}}B_{n+1} = (v-\alpha_n)B_{n} - \sqrt{\beta_n}B_{n-1},
\end{equation}
where $\alpha$ and $\beta$ are defined by
\begin{equation}\label{eq:deduction}
\begin{cases}
  \displaystyle \beta_{n+1} = n+\frac{1}{2} + u\alpha_n -\alpha_n^2 - \beta_n;\\
  \displaystyle \alpha_{n+1} = u - \alpha_n + \frac{1}{2\beta_{n+1}}
  \sum_{k=0}^n \alpha_k
\end{cases}
\end{equation}
with $\alpha_0 = m_1/m_0$ and $\sqrt{\beta_1} = \sqrt{m_0m_2-m_1^2}/m_0$, 
where $m_i$, $i = 0, 1, 2$  are moments of the Gaussian:
\begin{equation}
  m_i  = \int_0^\infty v^{i} e^{-(v-u)^2}\ud{v}, \qquad i = 0, 1, 2.
\end{equation}
The deduction formula are derived from the Christoffel-Darboux
identity 
\begin{equation}\label{eq:CD}
  \sum_{k=0}^n B_k^2 = \sqrt{\beta_{n+1}} \left(B'_{n+1}B_n-B_{n+1}B_n'\right)
\end{equation}
as follows. By orthogonality of $\{B_n\}$, we get
\begin{equation*}
  \alpha_n = \int_0^{\infty} v B_n^2 e^{-(v-u)^2} \ud v, \quad \text{and} \quad \sqrt{\beta_{n+1}} = \int_0^{\infty} v B_n B_{n+1} e^{-(v-u)^2} \ud v.
\end{equation*}
Integrate the identity \eqref{eq:CD} over $v$ with the weight, we get
\begin{align*}
n+1 &= \sqrt{\beta_{n+1}}\int_0^{\infty} B'_{n+1}B_n e^{-(v-u)^2} \ud v
= \int_0^{\infty} vB_{n+1}B'_{n+1} e^{-(v-u)^2} \ud v
\\ &= -\frac{1}{2} + \int_0^{\infty} v^2 B^2_{n+1} e^{-(v-u)^2} \ud v - u\alpha_n, 
\end{align*}
where the second equality is obtained by taking the inner product with
$B'_{n+1}$ of recursion equation \eqref{eq:recusion}, and the third
comes from integration by parts. From this we get the first deduction
relation in \eqref{eq:deduction}. 
Next multiply \eqref{eq:CD} with $v$ and then integrate, we obtain 
\begin{align*}
\sum_{k=0}^n\alpha_k & = \sqrt{\beta_{n+1}}\int_0^{\infty} v B'_{n+1}B_n e^{-(v-u)^2} \ud v\\
& = \sqrt{\beta_{n+1}}\left(2\int_0^{\infty} v^2 B_{n+1}B_n e^{-(v-u)^2} \ud v -2u \int_0^{\infty} vB_{n+1}B_n e^{-(v-u)^2} \ud v \right)\\
& = 2\beta_{n+1}\left(\alpha_n+\alpha_{n+1}-u\right),
\end{align*}
where the first equality comes from the fact that $\int_0^{\infty} v
  B_{n+1}B'_n e^{-(v-u)^2} \ud v=0$, the second is due to integration by parts,
and the third comes from integrating the recursion equation
\eqref{eq:recusion} multiplied by $vB_{n+1}$. This gives the other
deduction relation in \eqref{eq:deduction}.

\bibliographystyle{amsxport}
\bibliography{kinetic}

\end{document}